\documentclass[noams]{compositio}
\usepackage{color}
\usepackage{amsmath,amssymb,amsfonts,graphicx,fancyhdr}
\usepackage{array}
\usepackage{bbm}
 \usepackage[colorlinks=true,citecolor=blue,linkcolor=red]{hyperref}
\usepackage{indentfirst}
\usepackage{float}

\newtheorem{thm}{Theorem}[section]
\newtheorem{remark}[thm]{Remark}
\newtheorem{example}[thm]{Example}

\newtheorem{notation}[thm]{Notation}

\newtheorem{defn}[thm]{Definition}
\newtheorem{lem}[thm]{Lemma}
\newtheorem{prop}[thm]{Proposition}
\newtheorem{cor}[thm]{Corollary}

\newcommand{\D}{\ensuremath{{\rm d}}}
\setlength{\parindent}{2em}

\begin{document}

\title{Rank $n$ swapping algebra for the $\operatorname{PSL}(n, \mathbb{R})$ Hitchin component}
\author{Zhe Sun}
\email{sunzhe1985@gmail.com}
\address{Room 106, Jing Zhai Building, Tsinghua University, Hai Dian District, Beijing, 100084, China}

\classification{53D30 ,05E99 (primary).}
\keywords{Hitchin component, rank n, swapping algebra, Fock-Goncharov, cluster.}
\thanks{The research leading to these results has received funding from the European Research Council under the {\em European Community}'s seventh Framework Programme
(FP7/2007-2013)/ERC {\em  grant agreement}.}

\begin{abstract}
F. Labourie [arXiv:1212.5015] characterized the Hitchin components for $\operatorname{PSL}(n, \mathbb{R})$ for any $n>1$ by using the swapping algebra, where the swapping algebra should be understood as a ring equipped with a Poisson bracket. We introduce the rank $n$ swapping algebra, which is the quotient of the swapping algebra by the $(n+1)\times(n+1)$ determinant relations. The main results are the well-definedness of the rank $n$ swapping algebra and the ``cross-ratio'' in its fraction algebra. As a consequence, we use the sub fraction algebra of the rank $n$ swapping algebra generated by these "cross-ratios" to characterize the $\operatorname{PSL}(n, \mathbb{R})$ Hitchin component for a fixed $n>1$.
We also show the relation between the rank $2$ swapping algebra and the cluster $\mathcal{X}_{\operatorname{PGL}(2,\mathbb{R}),D_k}$-space.
\end{abstract}

\maketitle

\section{Introduction}
\subsection{Background}
Let $S$ be a connected oriented closed surface of genus $g>1$. When $G$ is a reductive Lie group, the character variety is
$$R(S,G):= \{\;homomorphisms \; \rho: \; \pi_1(S) \rightarrow G\;\} \verb|//| G,$$
where the group $G$ acts on homomorphisms above by conjugation, and the
quotient is taken in the sense of geometric invariant theory \cite{MFK94}. When $G=\operatorname{PSL}(2, \mathbb{R})$, the character variety $R(S,\operatorname{PSL}(2, \mathbb{R}))$ has $4g-3$ connected components \cite{G88}. Two of these components correspond to all discrete faithful homomorphisms from $\pi_1(S)$ to $\operatorname{PSL}(2, \mathbb{R})$. By the uniformization theorem, any one of the two components is diffeomorphic to the Teichm\"uller space of complex structures on $S$ up to isotopy.
 For $n \geq 2$, we define {\em $n$-Fuchsian representation} to be a representation $\rho$, which can be written as $\rho = i \circ \rho_0$, where $\rho_0$ is a discrete faithful representation of $\pi_1(S)$ with values in $\operatorname{PSL}(2, \mathbb{R})$ and $i$ is the irreducible representation of $\operatorname{PSL}(2, \mathbb{R})$ in $\operatorname{PSL}(n, \mathbb{R})$.
In \cite{H92}, N. Hitchin found one of the connected components of the character variety $R(S,\operatorname{PSL}(n, \mathbb{R}))$, which contains the $n$-Fuchsian representations, called {\em Hitchin component} and denoted by $H_n(S)$. By N. Hitchin \cite{H92}, the GIT quotient of the Hitchin component $H_n(S)$ coincides with its usual topological quotient. Furthermore, the Hitchin component $H_n(S)$ is diffeomorphic to a ball $\mathbb{R}^{(2g-2)(n^2-1)}$.

A decade later, F. Labourie \cite{L06} and O. Guichard \cite{Gu08} showed that
every $\rho$ in the Hitchin component $H_n(S)$ is one to one associated to a
$\rho$-equivariant ($\xi_\rho(\gamma x) = \rho(\gamma)\xi_\rho(x)$) hyperconvex Frenet curve $\xi_\rho$ from the boundary at infinity of $\pi_1(S)$---$\partial_\infty \pi_1(S)$ to $\mathbb{RP}^{n-1}$, where {\em hyperconvex} means that for any pairwise distinct points $(x_1, ... , x_p)$ with $p \leq n$,
the sum $\xi(x_1) + ... + \xi(x_p)$ is direct. Let $\xi^*_\rho$ be its associated $\rho$-equivariant osculating hyperplane curve from $\partial_\infty \pi_1(S)$ to $\mathbb{RP}^{{n-1}*}$.
Let $\tilde{\xi_{\rho}}$ ($\tilde{\xi^*_{\rho}}$ resp.) be the lifts of $\xi_{\rho}$ ($\xi^*_{\rho}$ resp.) with values in $\mathbb{R}^n$ ($\mathbb{R}^{n*}$ resp.).
F. Labourie defined the {\em weak cross ratio} $\mathbb{B}_\rho$ of four different points $x,y,z,t$ in $\partial_\infty \pi_1(S)$:
\begin{equation}
	\mathbb{B}_{\rho}(x,y,z,t) = \frac{\left<\left.\tilde{\xi}(x)\right\vert \tilde{\xi^*}(z)\right> }{\left<\left.\tilde{\xi}(x)\right\vert \tilde{\xi^*}(t)\right>} \cdot \frac{\left<\left.\tilde{\xi}(y)\right\vert \tilde{\xi^*}(t)\right>}{\left<\left.\tilde{\xi}(y)\right\vert \tilde{\xi^*}(z)\right>}.
\end{equation}
Such cross ratios are the only cross ratios, called the rank $n$ cross ratios, that satisfy some symmetry properties, normalisation properties, multiplicative cocycle identities, $\pi_1(S)$-invariant properties and $\mathbb{R}^n$-linear algebraic properties \cite{L07}. Therefore, the space of the rank $n$ cross ratios identifies with the Hitchin component $H_n(S)$.

Later on, F. Labourie \cite{L12} defined the swapping algebra to characterize the union of the Hitchin components $\bigcup_{n=2}^\infty H_n(S)$.
The swapping algebra is defined on the ordered pair of points of a subset $\mathcal{P} \subseteq S^1$. More precisely, we represent an ordered pair $(x, y)$ of $\mathcal{P}$ by the expression $xy$, and we consider the ring $\mathcal{Z}(\mathcal{P}) := \mathbb{K}[\{xy\}_{\forall x,y \in \mathcal{P}}]/\left(\{xx| \forall x \in \mathcal{P}\}\right)$ over a field $\mathbb{K}$ of characteristic zero. Then we equip $\mathcal{Z}(\mathcal{P})$ with a Poisson bracket $\{\cdot,\cdot\}$, called the swapping bracket,
by extending the formula on generators for any $rx, sy \in \mathcal{P}$:
\begin{equation}
\{rx, sy\} = \mathcal{J}(rx, sy) \cdot ry \cdot sx ,
\end{equation}
to $\mathcal{Z}(\mathcal{P})$ by using Leibniz's rule.
We will define the {\em linking number} $\mathcal{J}(rx, sy)$ in Section \ref{sa}.
Therefore, the {\em swapping algebra} of $\mathcal{P}$ is $(\mathcal{Z}(\mathcal{P}), \{\cdot , \cdot\})$.
Let $x,y,z,t$ belong to $\mathcal{P}$ so that $x \neq t$ and $y \neq z$. The {\em cross fraction} determined by $(x,y,z,t)$ is the element:
$$
[x,y,z,t] :=\frac{xz}{xt} \cdot \frac{yt}{yz}.
$$
Let $\mathcal{B}(\mathcal{P})$ be the sub fraction ring of $\mathcal{Z}(\mathcal{P})$ generated by all the cross fractions.
Then, the {\em swapping multifraction algebra of $\mathcal{P}$} is $(\mathcal{B}(\mathcal{P}), \{\cdot , \cdot\})$. Let $\mathcal{R}$ be the subset of $\partial_\infty \pi_1(S)$
given by the end points of periodic geodesics.
F. Labourie consider a natural homomorphism $I$ from $\mathcal{B}(\mathcal{R})$ to $C^\infty(H_n(S))$ by extending the following formula on generators to $\mathcal{B}(\mathcal{R})$:
\begin{equation}
I([x,y,z,t]) = \mathbb{B}_{\rho}(x,y,z,t).
\end{equation}

\begin{thm}\label{lswapabg}
{\sc[F. Labourie \cite{L12}]} Let $S$ be a connected oriented closed surface of genus $g>1$. Let $\{\cdot, \cdot\}$ be the swapping bracket. For $n\geq 2$, let $\{\cdot, \cdot\}_{S}$ be the Atiyah-Bott-Goldman Poisson bracket \cite{AB83}\cite{G84}  of the Hitchin component $H_n(S)$. If $\Gamma_1,...,\Gamma_k,...$ is a vanishing sequence of finite index subgroups of $\pi_1(S)$. Let  $S_k= \mathbb{H}^2/\Gamma_k$, vanishing means that any two primitive representatives of $\pi_1(S)$ in the sequence $S_1,...,S_k,...$ intersect simply at zero or one point at last. For any $b_0,b_1 \in \mathcal{B}(\mathcal{R})$, we have $$\lim_{k\rightarrow \infty}\{I(b_0),I(b_1)\}_{S_k} = I\circ\{b_0,b_1\}.$$
\end{thm}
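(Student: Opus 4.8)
The plan is to reduce the identity to the generators of $\mathcal{B}(\mathcal{R})$, to compute the Atiyah--Bott--Goldman bracket of two cross‑ratio functions on each cover $S_k$ as a finite sum of ``local'' contributions indexed by intersections of closed geodesics, and then to let $k\to\infty$ so that the vanishing hypothesis collapses these sums onto the purely combinatorial data on $S^1$ which defines the swapping bracket. For the reduction: both $\{\cdot,\cdot\}$ and each $\{\cdot,\cdot\}_{S_k}$ are biderivations, and $I\colon\mathcal{B}(\mathcal{R})\to C^\infty(H_n(S_k))$ is a homomorphism of fraction rings whose image consists of nowhere‑vanishing functions (by hyperconvexity, $\left<\tilde\xi_\rho(p)\mid\tilde\xi^*_\rho(q)\right>\neq 0$ for $p\neq q$). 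Hence if $\lim_k\{I(b_0),I(b)\}_{S_k}=I(\{b_0,b\})$ holds whenever $b_0,b$ are cross fractions, the Leibniz rule propagates it through all products and quotients, hence to all of $\mathcal{B}(\mathcal{R})$. So it suffices to treat $b_0=[x,y,z,t]$, $b_1=[a,b,c,d]$, and in fact to understand the ABG bracket of the elementary functions $\rho\mapsto\left<\tilde\xi_\rho(p)\mid\tilde\xi^*_\rho(q)\right>$ out of which the cross fractions are built as products of ratios (these are multivalued individually but enter a cross fraction single‑valuedly).

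Next, the intersection formula on each $S_k$. Identify $T_\rho H_n(S_k)$ with $H^1(\Gamma_k,\mathfrak{sl}(n,\R)_{\mathrm{Ad}\,\rho})$ and the ABG form with the cup product followed by the Killing pairing and the fundamental class, $H^1\times H^1\to H^2(\Gamma_k)\cong\R$. For $p\in\mathcal{R}$ fixed by $\gamma\in\Gamma_k$, the line $\tilde\xi_\rho(p)$ is the attracting eigenline of $\rho(\gamma)$ and the hyperplane $\tilde\xi^*_\rho(q)$ is the attracting fixed point of $\rho(\gamma')$ on $\mathbb{RP}^{(n-1)*}$; first‑order eigenvector/eigencovector perturbation then gives an explicit expression for the variation $\delta_u\left<\tilde\xi_\rho(p)\mid\tilde\xi^*_\rho(q)\right>$ depending only on the cocycle values of $u$ at $\gamma$ and $\gamma'$, i.e. a cohomology class supported on the closed geodesics (axes) $\bar\gamma,\bar\gamma'\subset S_k$. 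Plugging such supported classes into the cup product and applying Poincaré duality yields a formula
\begin{equation*}
\{I([x,y,z,t]),I([a,b,c,d])\}_{S_k}=\sum_{\mathfrak{p}}\varepsilon(\mathfrak{p})\,\Phi_{\mathfrak{p}},
\end{equation*}
where $\mathfrak{p}$ ranges over the finitely many intersection points in $S_k$ between the closed geodesics attached to $\{x,y,z,t\}$ and those attached to $\{a,b,c,d\}$, $\varepsilon(\mathfrak{p})=\pm1$ is the oriented intersection number, and $\Phi_{\mathfrak{p}}$ is a product of cross‑ratio‑type factors built from the eight points and the two geodesics through $\mathfrak{p}$. (Equivalently one may write $I([x,y,z,t])$ as a Laurent monomial in Fock--Goncharov coordinates for an ideal triangulation of $S_k$ carrying $x,y,z,t,a,b,c,d$ among its vertices, turning the bracket into the cluster Poisson bracket before passing to the limit; this is the mechanism underlying the rank‑$2$ statement announced in the abstract.)

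Now the limit and the matching with the swapping bracket. Along the vanishing sequence, for $k$ large any two of the relevant primitive geodesics meet at most once and transversally, so the sum above has a controlled number of terms; moreover the geometry near such a simple intersection is governed entirely by the cyclic order on $S^1$ of the four boundary points involved, which is precisely the data recorded by the linking number $\mathcal{J}$. Consequently, in the limit each $\varepsilon(\mathfrak{p})\Phi_{\mathfrak{p}}$ becomes $\mathcal{J}$ of the appropriate pair of chords times the image under $I$ of a monomial $rv\cdot su$ in the generators of $\mathcal{Z}(\mathcal{R})$, and the whole sum becomes $I$ applied to the element obtained from $[x,y,z,t]$ and $[a,b,c,d]$ by the Leibniz rule together with $\{rx,sy\}=\mathcal{J}(rx,sy)\,ry\cdot sx$ — that is, $I(\{[x,y,z,t],[a,b,c,d]\})$, which is the claim.

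The main obstacle is exactly the last two paragraphs rather than the first: the formal Poisson calculus (reduction, Leibniz) and the abstract shape of the intersection formula are routine, but one must (i) pin down the first‑variation formula for the Frenet curve precisely enough to recognise each $\Phi_{\mathfrak{p}}$ as a genuine cross fraction, and (ii) prove that the ``parasitic'' contributions — intersection points not forced by the cyclic order, together with contributions of short or imprimitive geodesics — truly disappear along the vanishing tower $S_k\to S$, with the surviving terms converging. This is the step where the hyperbolic geometry of the covers, and not merely the algebra of the swapping ring, does the work.
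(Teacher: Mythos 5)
The statement you were asked to prove is not proved in this paper. Theorem~\ref{lswapabg} is attributed explicitly to F.~Labourie and cited from \cite{L12} (arXiv:1212.5015, ``Goldman algebra, opers and the swapping algebra''); the present paper records it as background in the introduction and again (with $I$ replaced by $I_n$) at the end of Section~4, but supplies no argument in either place. So there is no ``paper's own proof'' to compare against. The paper's own contributions are the two later theorems: that $R_n(\mathcal{P})$ is a Poisson ideal (Theorem~\ref{rwdefined}) and that $\mathcal{Z}_n(\mathcal{P})$ is an integral domain (Theorem~\ref{intdom}).

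That said, your sketch is a reasonable high-level reconstruction of what happens in~\cite{L12}: reducing to cross-fraction generators via the Leibniz/biderivation property; expressing the Atiyah--Bott--Goldman bracket of the two cross-ratio functions on each cover $S_k$ as a Goldman--Wolpert-type sum of local contributions over intersection points of the relevant closed geodesics; and using the vanishing hypothesis so that for large $k$ each relevant pair of primitive geodesics meets at most once transversally, whence the local term is determined by the cyclic order of the four boundary points and matches $\mathcal{J}(rx,sy)\cdot ry\cdot sx$. You also correctly identify where the actual work lies (the first-variation formula for the Frenet curve and control of the parasitic contributions along the tower). But any claim that this ``is'' the paper's proof, or a verification of the detailed limiting estimates, would require consulting \cite{L12} directly rather than the present paper, which does not contain them.
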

The above theorem is true for any integer $n>1$, therefore the swapping multifraction algebra $(\mathcal{B}(\mathcal{R}),\{\cdot,\cdot\})$ asymptotically characterizes the union of Hitchin components $\bigcup_{n=2}^\infty H_n(S)$.

F. Labourie also showed that, for the space $\mathcal{L}_n$ of the Drinfeld-Sokolov reduction \cite{DS85}\cite{Se91} on the space of $\operatorname{PSL}(n,\mathbb{R})$-Hitchin opers with trivial holonomy, the natural homomorphism $i$ from the swapping multifraction algebra $\mathcal{B}(S^1)$ to the function space $C^\infty(\mathcal{L}_n)$ is Poisson with respect to the swapping bracket and the Poisson bracket corresponding to second Gelfand-Dickey symplectic structure.

Both the homomorphism $I$ and the homomorphism $i$ have large kernels arising from linear algebra of $\mathbb{R}^n$. Is the swapping algebra $(\mathcal{Z}(\mathcal{P}), \{\cdot , \cdot\})$ still well-defined after divided by these corresponding linear algebraic relations? Is the associated sub fraction algebra generated by all the cross fractions well-defined? These two questions are the main focus of this paper.

\subsection{Rank $n$ swapping algebra and the main results}
For $n\geq 2$, let $R_n(\mathcal{P})$ be the ideal of $\mathcal{Z}(\mathcal{P})$ generated by  $$\left\{ D \in \mathcal{Z}(\mathcal{P}) \; |  \; D = \det \left(\begin{array}{lcr}
                                        x_1 y_1 & ... & x_1 y_{n+1} \\
                                        ... & ... & ... \\
                                        x_{n+1} y_1 & ... & x_{n+1} y_{n+1}
                                      \end{array} \right) ,  \forall x_1, ..., x_{n+1}, y_1,..., y_{n+1} \in \mathcal{P} \right\}.$$
Let $\mathcal{Z}_n(\mathcal{P})$ be the quotient ring $\mathcal{Z}(\mathcal{P})/R_n(\mathcal{P})$. The following two theorems are the main results of this paper, which will be proven in Section \ref{rsa} and \ref{sectionint}.
By induction on corresponding positions of the points on the circle, we prove the following theorem.
\begin{thm}
\label{wd}
For $n \geq 2$, $R_n(\mathcal{P})$ is a Poisson ideal with respect to the swapping bracket, thus $\mathcal{Z}_{n}(\mathcal{P})$ inherits a Poisson bracket from the swapping bracket.
\end{thm}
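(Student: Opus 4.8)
The plan is to reduce the whole statement to a single bracket computation. Since the swapping bracket is a biderivation and $R_n(\mathcal{P})$ is an ideal, for a generating determinant $D=\det(x_iy_j)_{1\le i,j\le n+1}$ and an arbitrary $f\in\mathcal{Z}(\mathcal{P})$ the element $\{D,f\}$ is a $\mathcal{Z}(\mathcal{P})$-linear combination of terms $\{D,zw\}$ with $zw$ a generator; so it is enough to prove $\{D,zw\}\in R_n(\mathcal{P})$ for every generator $D$ of $R_n(\mathcal{P})$ and every $z\ne w$ in $\mathcal{P}$. A determinant with a repeated row point or column point is already $0$, so we may assume $x_1,\dots,x_{n+1}$ are pairwise distinct and $y_1,\dots,y_{n+1}$ are pairwise distinct.

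Writing $D=\sum_{\sigma}\operatorname{sgn}(\sigma)\prod_i x_iy_{\sigma(i)}$, applying Leibniz's rule term by term, and collecting, one obtains
\begin{equation}
\{D,zw\}=\sum_{i,j=1}^{n+1}C_{ij}\,\{x_iy_j,zw\}=\sum_{i,j=1}^{n+1}C_{ij}\,\mathcal{J}(x_iy_j,zw)\,(x_iw)(zy_j),
\end{equation}
where $C_{ij}$ is the $(i,j)$-cofactor of $(x_iy_j)$ and the second equality is the defining formula of the bracket. The structural fact to exploit is that, in generic cyclic position, the linking number splits as $\mathcal{J}(x_iy_j,zw)=\tfrac{c}{2}\bigl(\epsilon_{z,w}(x_i)-\epsilon_{z,w}(y_j)\bigr)$, where $\epsilon_{z,w}(p)=\pm1$ records which of the two arcs of $S^1\setminus\{z,w\}$ contains $p$ and $c\in\{\pm1\}$ depends only on the position of $z,w$, not on $i$ or $j$. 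Substituting and regrouping,
\begin{equation}
\{D,zw\}=\frac{c}{2}\left(\sum_{i=1}^{n+1}\epsilon_{z,w}(x_i)(x_iw)\Bigl(\sum_{j=1}^{n+1}C_{ij}(zy_j)\Bigr)-\sum_{j=1}^{n+1}\epsilon_{z,w}(y_j)(zy_j)\Bigl(\sum_{i=1}^{n+1}C_{ij}(x_iw)\Bigr)\right).
\end{equation}
Here $\sum_j C_{ij}(zy_j)$ is exactly the determinant of the pairing matrix obtained from $(x_iy_j)$ by replacing the row point $x_i$ with $z$, hence one of the listed generators of $R_n(\mathcal{P})$; dually $\sum_i C_{ij}(x_iw)$ is the determinant obtained by replacing the column point $y_j$ with $w$, so it also lies in $R_n(\mathcal{P})$. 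As $R_n(\mathcal{P})$ is an ideal, $\{D,zw\}\in R_n(\mathcal{P})$.

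The main obstacle is that the arc formula for $\mathcal{J}$ holds only when $z,w,x_1,\dots,x_{n+1},y_1,\dots,y_{n+1}$ are pairwise distinct, whereas $\mathcal{P}$ may impose coincidences (for instance $z=x_k$, or some $x_i$ equal to some $y_j$), at which $\mathcal{J}$ takes the fractional boundary values defined in Section \ref{sa}. To handle this I would run an induction on the cyclic arrangement of these $2(n+1)+2$ points on $S^1$: the generic configuration above, together with the degenerate configurations in which all $\mathcal{J}(x_iy_j,zw)$ vanish, serve as base cases, and in the inductive step one slides two consecutive points together or past one another and checks that the induced change in $\{D,zw\}$ is again an $\mathcal{Z}(\mathcal{P})$-multiple of a determinant with a single row point or column point displaced, so that it stays in $R_n(\mathcal{P})$. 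The delicate point is the bookkeeping: verifying that the increments always reassemble into complete cofactor sums rather than partial ones.
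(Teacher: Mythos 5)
Your core computation is sound and takes a genuinely different route from the paper. The paper (Lemma~\ref{swcal}) proceeds by induction on the cyclic arrangement: it fixes the anticlockwise order of $x_1,\dots,x_{n+1}$, counts how many of the $2(n+1)$ points lie on one side of the chord $\overrightarrow{ab}$, and slides $b$ past one point at a time, using the cocycle identity $\mathcal{J}(ab,xy)-\mathcal{J}(ab',xy)=\mathcal{J}(b'b,xy)$ to track the increment, eventually assembling the explicit formula $\Delta^R(a,b)$. Your approach instead exploits an additive splitting of the linking number into a ``row part'' and a ``column part'' so that the cofactor sums collapse directly into generators of $R_n(\mathcal{P})$ in one pass. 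This is shorter and cleaner, while the paper's route has the side benefit of producing the position-adapted expression $\Delta^R(a,b)$.

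You are, however, more cautious than you need to be. The decomposition you call generic is in fact an identity of the defining formula, valid in every configuration including all the coincidences you worry about. Indeed, set $\tau(a,b,c):=\sigma(a-b)\,\sigma(a-c)\,\sigma(c-b)$, so that the definition reads $\mathcal{J}(rx,sy)=\tfrac12\bigl(\tau(r,x,y)-\tau(r,x,s)\bigr)$. Since $\tau$ is totally antisymmetric and satisfies the cocycle relation $\tau(a,b,c)-\tau(a,b,d)+\tau(a,c,d)-\tau(b,c,d)=0$, one gets $\mathcal{J}(rx,sy)=-\mathcal{J}(sy,rx)$ as an identity of signs, and hence
\[
\mathcal{J}(x_iy_j,zw)=-\mathcal{J}(zw,x_iy_j)=\tfrac12\bigl(\tau(z,w,x_i)-\tau(z,w,y_j)\bigr)
=\tfrac{\sigma(z-w)}{2}\bigl(\epsilon_{z,w}(x_i)-\epsilon_{z,w}(y_j)\bigr),
\]
with $\epsilon_{z,w}(p)=\sigma(z-p)\,\sigma(p-w)$ and $c=\sigma(z-w)$. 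This holds verbatim when $x_i$ or $y_j$ coincides with $z$ or $w$ (then $\epsilon_{z,w}$ vanishes at that point) or with each other (then the two $\tau$'s cancel), so no separate treatment of degenerate positions, and no fallback induction on cyclic arrangement, is required. Once this identity is established, your regrouped formula exhibits $\{D,zw\}$ as a $\mathcal{Z}(\mathcal{P})$-linear combination of determinants with one row point replaced by $z$ or one column point replaced by $w$, each a listed generator of $R_n(\mathcal{P})$, and the theorem follows.
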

It then follows the Theorem \ref{wd} that the {\em rank $n$ swapping algebra of $\mathcal{P}$} is the compatible pair $(\mathcal{Z}_{n}(\mathcal{P}), \{\cdot,\cdot\})$.
For the well-definedness of the cross fractions of the ring $\mathcal{Z}_n(\mathcal{P})$, by using very classical geometric invariant theory \cite{CP76}\cite{W39} and Lie group cohomology \cite{CE48}, we prove the following theorem.
\begin{thm}
\label{int}
For $n \geq 2$ , the quotient ring $\mathcal{Z}_n(\mathcal{P})$  is an integral domain.
\end{thm}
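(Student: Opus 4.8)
\section*{Proof proposal for Theorem \ref{int}}

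The plan is to realize $\mathcal{Z}_n(\mathcal{P})$ as (isomorphic to) a ring of $\operatorname{GL}(n,\mathbb{K})$-invariant functions on an explicit affine variety that is visibly irreducible, and then read off integrality from that description. First I would reduce to the case where $\mathcal{P}$ is \emph{finite}. For $\mathcal{P}'\subseteq\mathcal{P}$, since $\mathcal{Z}(\mathcal{P})$ is a polynomial ring on the variables $\{xy : x\neq y\}$, there is a $\mathbb{K}$-algebra retraction $\mathcal{Z}(\mathcal{P})\to\mathcal{Z}(\mathcal{P}')$ sending $xy$ to $xy$ if $x,y\in\mathcal{P}'$ and to $0$ otherwise; a generating determinant $\det(x_iy_j)$ of $R_n(\mathcal{P})$ involving a point outside $\mathcal{P}'$ acquires a zero row or column, so this retraction carries $R_n(\mathcal{P})$ into $R_n(\mathcal{P}')$ and descends to a retraction $\mathcal{Z}_n(\mathcal{P})\to\mathcal{Z}_n(\mathcal{P}')$. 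Hence each $\mathcal{Z}_n(\mathcal{P}')\to\mathcal{Z}_n(\mathcal{P})$ is injective, and $\mathcal{Z}_n(\mathcal{P})=\varinjlim_{\mathcal{P}'\text{ finite}}\mathcal{Z}_n(\mathcal{P}')$ is a filtered colimit of rings along injections; such a colimit of integral domains is an integral domain, so it suffices to treat $\mathcal{P}$ finite, say $|\mathcal{P}|=p$.

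For finite $\mathcal{P}$, introduce the affine $\mathbb{K}$-variety $V_n(\mathcal{P})$ parametrizing families $(v_x,\varphi_x)_{x\in\mathcal{P}}$ with $v_x\in\mathbb{K}^n$, $\varphi_x\in\mathbb{K}^{n*}$ and $\langle v_x,\varphi_x\rangle=0$ for all $x$; it is the product over $\mathcal{P}$ of copies of the affine cone $Q=\{(v,\varphi): \sum_{i=1}^n v_i\varphi_i=0\}\subseteq\mathbb{K}^{2n}$. For $n\geq 2$ the quadratic form $\sum_{i=1}^n v_i\varphi_i$ has rank $2n\geq 3$, hence is absolutely irreducible, so $Q$, and with it the product $V_n(\mathcal{P})$, is a geometrically integral $\mathbb{K}$-variety; therefore $\mathbb{K}[V_n(\mathcal{P})]$ is an integral domain. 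This is exactly where the hypothesis $n\geq 2$ enters: for $n=1$ the form factors as $v_1\varphi_1$ and $Q$ is reducible. The group $\operatorname{GL}(n,\mathbb{K})$ acts on $V_n(\mathcal{P})$ by $g\cdot(v_x,\varphi_x)=(g v_x,\varphi_x g^{-1})$, preserving each pairing $\langle v_x,\varphi_x\rangle$, and there is a ring homomorphism $\Psi\colon\mathcal{Z}(\mathcal{P})\to\mathbb{K}[V_n(\mathcal{P})]$ determined by $xy\mapsto\langle v_x,\varphi_y\rangle$ (well defined since $xx\mapsto\langle v_x,\varphi_x\rangle=0$), whose image lies in $\mathbb{K}[V_n(\mathcal{P})]^{\operatorname{GL}(n,\mathbb{K})}$. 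Moreover $R_n(\mathcal{P})\subseteq\ker\Psi$, because the matrix $(\langle v_{x_i},\varphi_{y_j}\rangle)$ factors as a product of a $(n{+}1)\times n$ and an $n\times(n{+}1)$ matrix, so has rank $\leq n$.

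The heart of the argument is to show that $\Psi$ is \emph{surjective} onto the invariant ring with $\ker\Psi = R_n(\mathcal{P})$; this is where classical invariant theory enters. Writing $\widetilde{\mathcal{Z}}(\mathcal{P})=\mathbb{K}[xy : x,y\in\mathcal{P}]$ for the coordinate ring of the generic $p\times p$ matrix $(xy)$ and $\widetilde\Psi$ for the analogous map into $\mathbb{K}[\operatorname{Mat}_{p,n}\times\operatorname{Mat}_{n,p}]$, the first and second fundamental theorems of invariant theory for $\operatorname{GL}(n,\mathbb{K})$ \cite{W39, CP76} assert precisely that $\widetilde\Psi$ maps onto the invariant subring $\mathbb{K}[\operatorname{Mat}_{p,n}\times\operatorname{Mat}_{n,p}]^{\operatorname{GL}(n,\mathbb{K})}$ with kernel the prime ideal $\widetilde R_n$ generated by all $(n{+}1)\times(n{+}1)$ minors of $(xy)$. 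Since $\operatorname{GL}(n,\mathbb{K})$ is linearly reductive in characteristic zero (equivalently, the relevant higher rational cohomology vanishes, cf. \cite{CE48}), the invariants functor is exact and so commutes with passage to the quotient by the $\operatorname{GL}(n,\mathbb{K})$-stable ideal $I=(\langle v_x,\varphi_x\rangle : x\in\mathcal{P})$, while the Reynolds operator shows that the contraction $I\cap\mathbb{K}[\operatorname{Mat}_{p,n}\times\operatorname{Mat}_{n,p}]^{\operatorname{GL}(n,\mathbb{K})}$ is the ideal generated by the invariant elements $\langle v_x,\varphi_x\rangle=\widetilde\Psi(xx)$. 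Chasing these identifications gives $\mathbb{K}[V_n(\mathcal{P})]^{\operatorname{GL}(n,\mathbb{K})}=\widetilde{\mathcal{Z}}(\mathcal{P})/\big(\widetilde R_n+(xx:x\in\mathcal{P})\big)=\mathcal{Z}(\mathcal{P})/R_n(\mathcal{P})=\mathcal{Z}_n(\mathcal{P})$, the final identification being $\Psi$ itself. Thus $\mathcal{Z}_n(\mathcal{P})$ is isomorphic to a subring of the domain $\mathbb{K}[V_n(\mathcal{P})]$, hence an integral domain. I expect the main obstacle to be the bookkeeping in this last step — transporting the second fundamental theorem (primeness of the generic determinantal ideal) across the two successive quotients, killing the diagonal entries $xx$ and passing to $\operatorname{GL}(n,\mathbb{K})$-invariants, while keeping careful track of which ideals contract to which — rather than any single hard new input, since the individual ingredients (primeness of generic determinantal ideals, linear reductivity in characteristic zero, absolute irreducibility of the quadric $Q$ for $n\geq 2$) are all classical.
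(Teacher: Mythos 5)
Your proposal follows essentially the same route as the paper: realize $\mathcal{Z}_n(\mathcal{P})$ inside the $\operatorname{GL}(n,\mathbb{K})$-invariants of the coordinate ring of the product of quadric cones $\{\langle v_x,\varphi_x\rangle=0\}$, using Weyl's second fundamental theorem to match the determinantal relations, Concini--Procesi's first fundamental theorem to identify the invariant ring, a Reynolds-operator argument (the paper's Lemma \ref{lemgit}, proved by Weyl's unitary trick) to show the diagonal ideal contracts to the expected ideal of invariants, and exactness/left-exactness of invariants to conclude $\mathcal{Z}_n(\mathcal{P})$ embeds in the domain $\mathbb{K}[V_n(\mathcal{P})]^{\operatorname{GL}(n,\mathbb{K})}$. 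The paper packages the last step as a long exact sequence in group cohomology (Proposition \ref{propcohseq}), but that is the same linear-reductivity input you invoke.

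Two of your local arguments are cleaner than the paper's and worth noting. First, you justify irreducibility of each quadric cone $Q$ by observing that the quadratic form $\sum_{i=1}^n v_i\varphi_i$ has rank $2n\ge 3$ for $n\ge 2$, hence is absolutely irreducible; the paper instead runs a degree/variable count (Proposition \ref{propKp}) and then induction on $p$, and its inductive step quietly needs the new quadratic form to remain prime over the domain $K_{n,m}$, not just over $\mathbb{K}$ --- your ``geometrically integral, so the product is a domain'' phrasing sidesteps that subtlety entirely. Second, you make explicit the reduction from arbitrary $\mathcal{P}$ to finite $\mathcal{P}$ via the retraction and a filtered colimit along injections; the paper implicitly takes $\mathcal{P}=\{x_1,\dots,x_p\}$ finite without comment, so this is a genuine completion of the argument even though it is routine.
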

Let $\mathcal{B}_n(\mathcal{P})$ be the sub fraction ring of $\mathcal{Z}_n(\mathcal{P})$ generated by all the cross fractions.
Then, the {\em rank $n$ swapping multifraction algebra of $\mathcal{P}$} is the pair $(\mathcal{B}_n(\mathcal{P}), \{\cdot , \cdot\})$.
Thus, the homomorphism $I$ naturally factors through $\mathcal{B}_n(\mathcal{P})$ because of the rank $n$ cross ratio conditions \cite{L07}, which provides a homomorphism $$I_n :\mathcal{B}_n(\mathcal{R}) \rightarrow C^\infty(H_n(S)).$$
Then we can replace $I$ by $I_n$ in Theorem \ref{lswapabg}.
Therefore, for a fixed $n\geq2$, the rank $n$ swapping multifraction algebra $(\mathcal{B}_n(\mathcal{P}), \{\cdot,\cdot\})$ is the Poisson algebra which characterizes $H_n(S)$.
But the homomorphism $I_n$ is not injective, since the image of the cross fractions are $\pi_1(S)$ invariant. Still, the non-injectivity and the asymptotic behavior of $I_n$ are two obstructions to characterize $(H_n(S),\omega_{ABG})$ exactly. We suggest that these two obstructions are worth of being investigated.

For the homomorphism $i$, we do not have these two obstructions.
Similarly, we also have a homomorphism $$i_n :\mathcal{B}_n(S^1) \rightarrow C^\infty(\mathcal{L}_n)$$
induced from the homomorphism $i$. The homomorphism $i_n$ is Poisson by Theorem 10.7.2 in \cite{L12} and Theorem \ref{wd}, and injective by Theorem \ref{inv1}. As a conseqence, the rank $n$ swapping multifraction algebra $(\mathcal{B}_n(S^1), \{\cdot,\cdot\})$ should be regarded as the dual of $\mathcal{W}_n$ algebra.
\subsection{Rank $2$ swapping algebra and the cluster $\mathcal{X}_{\operatorname{PGL}(2,\mathbb{R}),D_k}$-space}
Let $S$ be a connected oriented surface with non-empty boundary and a finite set $P$ of special points on boundary, considered modulo isotopy. The rank $n$ swapping algebra also relates to the Fock-Goncharov's cluster-$\mathcal{X}_{\operatorname{PGL}(n,\mathbb{R}),S}$ space. V. Fock and A. Goncharov \cite{FG06} introduced the positive structure in sense of \cite{L94}\cite{L98} and the cluster algebraic structure for  the moduli space $\mathcal{X}_{\operatorname{PGL}(n,\mathbb{R}),S}$ of framed local systems of the surface $S$. The positive part of the moduli space $\mathcal{X}_{\operatorname{PGL}(n,\mathbb{R}),S}$ is related to the Hitchin component $H_n(S)$. (For the surface $S$ with boundary or punctures, we can still define $H_n(S)$, but the monodromy around a boundary component is conjugated to an upper or lower triangular totally positive matrix.)
Moreover, they introduced a special coordinate system for the cluster $\mathcal{X}_{\operatorname{PGL}(n,\mathbb{R}),S}$-space in \cite{FG06} Section 9, which generalizes Thurston's shear coordinates for Teichm\"uller space \cite{T86}. (The Fock-Goncharov coordinates are also used in case of the closed surface $S$ of genus $g>1$ \cite{FD14}.) This coordinate system is local, because it depends on the ideal triangulation $\mathcal{T}$. Moreover, the coordinate system for $\mathcal{T}$ gives us a split tori $\mathbb{T}_{\mathcal{T}}$ of $\mathcal{X}_{\operatorname{PGL}(n,\mathbb{R}),S}$. The space $\mathcal{X}_{\operatorname{PGL}(n,\mathbb{R}),S}$ is a variety glued by all these $\mathbb{T}_{\mathcal{T}}$, and the transition function from $\mathbb{T}_{\mathcal{T}}$ to another $\mathbb{T}_{\mathcal{T}'}$ is defined by a positive rational transformation corresponding to a composition of flips, where each flip is a composition of mutations in its cluster algebraic structure. The positive structure of $\mathcal{X}_{\operatorname{PGL}(n,\mathbb{R}),S}$ arises from the positivity of the rational transformations.

Let $D_k$ be a disc with $k$ special points on the boundary. In the last section, we will prove the following theorem.
\begin{thm}
Given an ideal triangulation $\mathcal{T}$ of $D_k$, there is an injective and Poisson homomorphims from the fraction algebra generated by the Fock-Goncharov coordinates for the cluster $\mathcal{X}_{\operatorname{PGL}(2,\mathbb{R}),D_k}$-space to the rank $2$ swapping multifraction algebra $(\mathcal{B}_n(\mathcal{P}), \{\cdot,\cdot\})$, with respect to the natural Fock-Goncharov Poisson bracket and the swapping bracket.
\end{thm}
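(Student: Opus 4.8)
The plan is to pull the Fock--Goncharov coordinates back to cross fractions, one per diagonal of $\mathcal{T}$. Identify the $k$ special points of $D_k$ with a cyclically ordered subset $\mathcal{P}=\{p_1,\dots,p_k\}\subset S^1$, so that $\mathcal{Z}_2(\mathcal{P})$ and its cross fractions are available, and let $\mathcal{T}_\circ$ be the set of the $k-3$ diagonals of $\mathcal{T}$. The Fock--Goncharov coordinate system attached to $\mathcal{T}$ has one coordinate $X_e$ per $e\in\mathcal{T}_\circ$; these are algebraically independent, so the algebra they generate is the Laurent ring $\mathbb{K}[X_e^{\pm 1}\,:\,e\in\mathcal{T}_\circ]$ (its fraction algebra being $\mathbb{K}(X_e)$), carrying the log-canonical Poisson bracket $\{X_e,X_f\}=\varepsilon_{ef}\,X_eX_f$, where $\varepsilon=(\varepsilon_{ef})$ is the antisymmetric exchange matrix of the quiver of $\mathcal{T}$; for a disc one has $\varepsilon_{ef}\in\{0,\pm 1\}$, with $\varepsilon_{ef}\neq 0$ exactly when $e$ and $f$ are two sides of a common triangle of $\mathcal{T}$ and the sign recording the orientation of that triangle. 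For $e\in\mathcal{T}_\circ$, let the two triangles of $\mathcal{T}$ adjacent to $e$ form a quadrilateral with vertices $a,b,c,d\in\mathcal{P}$ in cyclic order, $e$ joining $a$ to $c$; I would set
$$
\Phi(X_e):=[a,c,b,d]=\frac{ab}{ad}\cdot\frac{cd}{cb}\ \in\ \mathcal{B}_2(\mathcal{P}),
$$
fixing once and for all the convention (cross fraction versus its inverse $[a,c,d,b]$, both cross fractions) so that $\Phi(X_e)$ is the shear coordinate of $e$. Since $a,b,c,d$ are distinct, each factor $ab,cd,ad,cb$ is a nonzero element of the domain $\mathcal{Z}_2(\mathcal{P})$, so $\Phi(X_e)$ is a well-defined nonzero element of $\mathcal{B}_2(\mathcal{P})$, and it inverts to another cross fraction; hence $X_e\mapsto\Phi(X_e)$ extends to a ring homomorphism out of $\mathbb{K}[X_e^{\pm 1}]$, and --- once the images are shown algebraically independent --- out of $\mathbb{K}(X_e)$.

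For injectivity I would use the geometric model behind Theorem~\ref{int}: $\mathcal{Z}_2(\mathcal{P})$ embeds into the ring of regular functions on configurations $((u_x)_{x\in\mathcal{P}},(v_x)_{x\in\mathcal{P}})$ with $u_x\in\mathbb{R}^2$, $v_x\in\mathbb{R}^{2*}$, $\langle u_x,v_x\rangle=0$, via $xy\mapsto\langle u_x,v_y\rangle$. Writing $v_x=\mu_x u_x^{*}$ for a fixed linear identification $u\mapsto u^{*}$ of $\mathbb{R}^2$ with $\mathbb{R}^{2*}$ satisfying $\langle u,u^{*}\rangle=0$, one gets $xy\mapsto\mu_y\det(u_x,u_y)$; the scalars $\mu_{\bullet}$ then cancel in every cross fraction, so $[x,y,z,t]$ becomes the classical cross-ratio of the points $[u_x],[u_y],[u_z],[u_t]$ of $\mathbb{RP}^1$, which is a $\operatorname{PGL}(2,\mathbb{R})$-invariant rational function on the $(k-3)$-dimensional moduli space $\mathfrak{M}_k$ of $k$-tuples of points of $\mathbb{RP}^1$. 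Under $\Phi$ the coordinates $X_e$ become the cross-ratios along the diagonals of $\mathcal{T}$, that is, the Fock--Goncharov/Thurston shear coordinates; normalising three of the $k$ points by $\operatorname{PGL}(2,\mathbb{R})$ and then recovering the remaining $k-3$ one at a time from the diagonal cross-ratio equations of $\mathcal{T}$ (ear peeling) exhibits the rational map $\mathfrak{M}_k\dashrightarrow\mathbb{A}^{k-3}$ they define as birational, hence the functions $\Phi(X_e)$ are algebraically independent over $\mathbb{K}$ inside $\operatorname{Frac}\mathcal{Z}_2(\mathcal{P})$. Thus $\Phi$ is a well-defined embedding that extends to fraction algebras.

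It then remains to match the Poisson structures, and by the Leibniz rule this reduces to checking
$$
\{\Phi(X_e),\Phi(X_f)\}=\varepsilon_{ef}\,\Phi(X_e)\,\Phi(X_f)\qquad (e,f\in\mathcal{T}_\circ)
$$
for the swapping bracket. Expanding $\Phi(X_e)=\dfrac{ab\cdot cd}{ad\cdot cb}$ via Leibniz and inserting $\{rx,sy\}=\mathcal{J}(rx,sy)\,ry\cdot sx$, the left-hand side becomes a combination, weighted by the linking numbers $\mathcal{J}$, of products formed from the vertices of the two adjacent quadrilaterals, and each $\mathcal{J}$ is read off from the cyclic order of those points on $S^1$. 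Because two diagonals of a triangulation never cross, the cases to handle are: $e$ and $f$ disjoint; $e$ and $f$ sharing a vertex but not lying in a common triangle of $\mathcal{T}$; and $e,f$ being two sides of a common triangle $T$. I would carry these out in turn, expecting the $\mathcal{J}$-weighted contributions to cancel in the first two cases (so the bracket vanishes, matching $\varepsilon_{ef}=0$) and to assemble to $\pm\,\Phi(X_e)\Phi(X_f)$ in the third, with the sign read off from the orientation of $T$, which is exactly $\varepsilon_{ef}$. The bookkeeping of these linking-number signs against the Fock--Goncharov exchange matrix is the one genuinely delicate point; everything else is either formal or already contained in Theorems~\ref{wd} and~\ref{int}. (Alternatively, one could quote Labourie's general formula for the swapping bracket of two cross fractions from \cite{L12} and specialise it to pairs of diagonals of $\mathcal{T}$.)
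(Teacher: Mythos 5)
Your route is essentially the paper's (Definition~\ref{FGA} onward, Theorem~\ref{thmT}): send each Fock--Goncharov coordinate $X_e$ to a cross fraction supported on the quadrilateral around $e$, and verify the Poisson property from the linking numbers, by cases depending on whether the two diagonals are disjoint, share only a vertex, or are two sides of a common triangle. The paper takes $\theta_{\mathcal{T}}(X_{xz})=-\frac{yz}{tz}\cdot\frac{tx}{yx}$ and records a single identity, $\frac{\{ab,\,\theta_{\mathcal{T}}(X_{xz})\}}{ab\cdot\theta_{\mathcal{T}}(X_{xz})}\in\{0,\pm1\}$, from which the claim follows by Leibniz. Two caveats about your version. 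First, you stop at a three-case plan and defer the $\mathcal{J}$-sign bookkeeping against $\varepsilon_{ef}$, calling it ``the one genuinely delicate point''; that bookkeeping \emph{is} the content of the theorem and has to be carried out --- either by producing the analogue of the paper's displayed identity, or by specialising Labourie's bracket formula for two cross fractions, as you suggest in parenthesis. Second, with the paper's convention $X_{xz}=-\frac{\Omega(\hat y\wedge\hat z)\,\Omega(\hat t\wedge\hat x)}{\Omega(\hat t\wedge\hat z)\,\Omega(\hat y\wedge\hat x)}$, your choice $\Phi(X_e)=[a,c,b,d]=\frac{xy}{xt}\cdot\frac{zt}{zy}$ evaluates in the geometric model to $-1/X_e$, not to $X_e$; you flag that a convention must be fixed, but the one you chose does not match the coordinate. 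This does not spoil the present statement (the log-canonical bracket is preserved under applying $X\mapsto -1/X$ to all coordinates at once, since the sign changes from negation and inversion cancel in pairs), but it is not the paper's map and would break the flip-compatibility proved later in Lemma~\ref{lemT}, so it is worth pinning down.

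Where you genuinely improve on the paper is injectivity: the paper merely asserts that $\theta_{\mathcal{T}}$ is injective in its definition, while you supply a proof. Passing through the geometric model behind Theorem~\ref{int}, writing $v_x=\mu_x u_x^{*}$ on the dense locus so that cross fractions become classical cross-ratios of the points $[u_x]\in\mathbb{RP}^1$, and observing that the $k-3$ diagonal cross-ratios of a triangulation give a birational chart on the $(k-3)$-dimensional moduli of $k$ points in $\mathbb{RP}^1$, does yield algebraic independence of the images, hence injectivity on the fraction field. This is a sound and welcome addition that makes the theorem complete in a way the paper leaves implicit.
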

Then we will show that the cluster dynamic of the cluster $\mathcal{X}_{\operatorname{PGL}(2,\mathbb{R}),D_k}$-space can also be interpreted by the rank $2$ swapping algebra. As a consequence, the natural Fock-Goncharov Poisson bracket does not depend on the triangulations.
The above theorem is generalized for  $\mathcal{X}_{\operatorname{PGL}(n,\mathbb{R}),D_k}$-space in the following papers. For $n=3$, in Chapter 3 of \cite{Su14}, we showed a complicated homomorphism, where $k$ flags of $\mathbb{RP}^2$ correspond to the set $\mathcal{P}$ with $k$ elements. For a general $n$, the homomorphism is discussed in \cite{Su15}, where the set $\mathcal{P}$ has $(n-1)\cdot k$ elements, each flag of $\mathbb{RP}^{n-1}$ corresponding to $n-1$ points near each other on the boundary $S^1$.

Therefore, the rank $n$ swapping algebra provides the links among the Hitchin component $H_n(S)$, $\mathcal{W}_n$ algebra and the cluster $\mathcal{X}_{\operatorname{PGL}(2,\mathbb{R}),D_k}$-space.
\subsection{Further discussions}
In the upcoming paper \cite{Su1511}, we will define a quantized version of the rank $n$ swapping algebra. The quantization of $\mathcal{X}_{D_k, \operatorname{PSL}(n,\mathbb{R})}$ by Fock-Goncharov \cite{FG06} \cite{FG09} is embedded into our quantization of the rank $n$ swapping algebra. We will glue the rank $n$ swapping algebras to characterize the cluster $\mathcal{X}_{S, \operatorname{PSL}(n,\mathbb{R})}$-space for the surface $S$ in general. We expect to build a TQFT and some geometric invariants from the rank $n$ swapping algebra.

In \cite{Su1412}, we relate the rank $n$ swapping algebra to the discrete integrable system of the configuration space of N-twisted polygon in $\mathbb{RP}^{n-1}$ \cite{FV93}\cite{SOT10}\cite{KS13}. When $n=2$, there is a bi-hamiltonian structure for the configuration space of N-twisted polygon in $\mathbb{RP}^{n-1}$. This was conjectured in \cite{SOT10} for $n=3$. We expect that there exsits a bi-hamiltonian structure for $n$ in general.

\section{Swapping algebra revisited}
\label{sa}
In this section, we will recall some basic definitions about the swapping algebra introduced by F. Labourie in Section 2 of \cite{L12}. The new part of this section is that we take care of the compatibilities of the rings related to $\mathcal{Z}(\mathcal{P})$ and the swapping bracket, particularly the sub fraction ring $\mathcal{B}(\mathcal{P})$ generated by ``cross ratios''.
 \subsection{Linking number}
\begin{defn}{\sc[linking number]}
Let $(r, x, s, y)$ be a quadruple of four points in $S^1$. The {\em linking number} between $rx$ and $sy$ is
\begin{equation}
\mathcal{J}(rx, sy) = \frac{1}{2} \cdot \left( \sigma(r-x) \cdot \sigma(r-y) \cdot \sigma(y-x)
- \sigma(r-x) \cdot \sigma(r-s ) \cdot \sigma(s-x)\right),
\end{equation}
such that for any $a \in \mathbb{R}$, we define $\sigma(a)$ as follows. Remove any point $o$ different from $r,x,s,y$ in $S^1$ in order to get an interval $]0,1[$. Then the points $r,x,s,y \in S^1$ correspond to the real numbers in $]0,1[$, $\sigma(a)= -1; 0; 1$ whenever $a < 0$; $a = 0$; $a > 0$ respectively.
\end{defn}
\begin{figure}\centering
\includegraphics[width=4in]{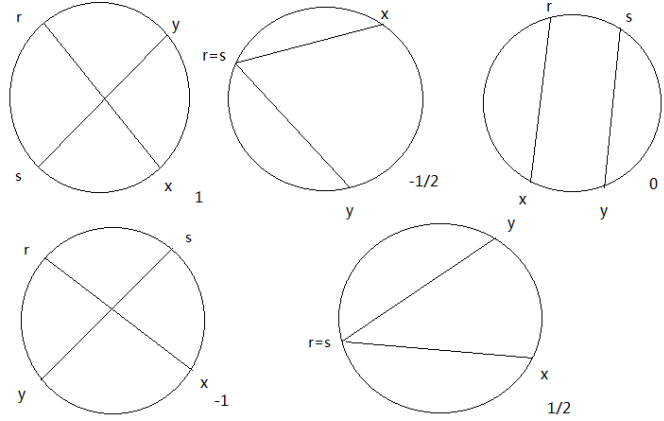}
\caption{\label{swapswap1} Linking number between $rx$ and $sy$.}
\end{figure}
In fact, the value of $\mathcal{J}(rx, sy)$ belongs to $\{0, \pm1, \pm \frac{1}{2}\}$, depends on the corresponding positions of $r,x,s,y$ and does not depend on the choice of the point $o$. In Figure \ref{swapswap1}, we describe five possible values of $\mathcal{J}(rx, sy)$.

\subsection{Swapping algebra}
Let $\mathcal{P}$ be a cyclic subset of $S^1$, we represent an ordered pair $(r, x)$ of $\mathcal{P}$ by the expression $rx$. Then we consider the associative commutative ring $$\mathcal{Z}(\mathcal{P}) := \mathbb{K}[\{xy\}_{\forall x,y \in \mathcal{P}}]/\{xx| \forall x \in \mathcal{P}\}$$ over a field $\mathbb{K}$ of characteristic $0$, where $\{xy\}_{\forall x,y \in \mathcal{P}}$ are variables.
Then we equip $\mathcal{Z}(\mathcal{P})$ with a swapping bracket.

\begin{defn}{\sc[swapping bracket \cite{L12}]}
\label{defn2.2}
{\em The swapping bracket} over $\mathcal{Z}(\mathcal{P})$ is defined by extending the following formula for any $rx$, $sy$ in $\mathcal{P}$ to $\mathcal{Z}(\mathcal{P})$ by using {\em Leibniz's rule}:
\begin{equation}
\{rx, sy\} = \mathcal{J}(rx, sy) \cdot ry \cdot sx.
\end{equation}
\end{defn}
By direct computations, F. Labourie proved the following theorem.
\begin{thm}
{\sc[F. Labourie \cite{L12}]} The swapping bracket is Poisson.
\end{thm}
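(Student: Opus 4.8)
\medskip
\noindent\emph{Proof strategy.} The plan is to verify, in order, the three defining properties of a Poisson bracket: skew-symmetry, the Leibniz rule, and the Jacobi identity. The Leibniz rule holds by construction, since the bracket is defined on the generators and then extended by forcing it. Skew-symmetry will follow on generators from the elementary identity $\mathcal{J}(rx,sy)=-\mathcal{J}(sy,rx)$ together with the commutativity of $\mathcal{Z}(\mathcal{P})$, and then on all of $\mathcal{Z}(\mathcal{P})$ because both sides are biderivations; along the way one records $\mathcal{J}(rr,sy)=0=\mathcal{J}(rx,ss)$, which is exactly what makes the generator formula descend through the relations $xx=0$, so that the bracket is well defined on $\mathcal{Z}(\mathcal{P})$. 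All of the real content is therefore the Jacobi identity.

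For the Jacobi identity, I would first reduce to generators: as the bracket is skew and Leibniz in each slot, the Jacobiator $(f,g,h)\mapsto\{f,\{g,h\}\}+\{g,\{h,f\}\}+\{h,\{f,g\}\}$ is a derivation in each of its three arguments, hence vanishes identically once it vanishes on triples of generators. So it suffices to show
\begin{equation}
\{ab,\{cd,ef\}\}+\{cd,\{ef,ab\}\}+\{ef,\{ab,cd\}\}=0
\label{eq:jacgen}
\end{equation}
for all $ab,cd,ef$, where we may assume $a\neq b$, $c\neq d$, $e\neq f$, the other cases being trivial.

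Next I would expand \eqref{eq:jacgen}: writing $\{cd,ef\}=\mathcal{J}(cd,ef)\,cf\cdot ed$ and applying the Leibniz rule once more turns the left-hand side into a sum of six degree-four monomials, each carrying a coefficient that is a product of two linking numbers. The key computational point, which I would check, is that these six monomials collapse onto only two: three of them are multiples of $af\cdot cb\cdot ed$ and the other three are multiples of $ad\cdot cf\cdot eb$. Collecting coefficients, \eqref{eq:jacgen} follows from the pair of scalar identities
\begin{align}
\mathcal{J}(cd,ef)\,\mathcal{J}(ab,cf)+\mathcal{J}(ef,ab)\,\mathcal{J}(cd,eb)+\mathcal{J}(ab,cd)\,\mathcal{J}(ef,ad)&=0,\label{eq:jac1}\\
\mathcal{J}(cd,ef)\,\mathcal{J}(ab,ed)+\mathcal{J}(ef,ab)\,\mathcal{J}(cd,af)+\mathcal{J}(ab,cd)\,\mathcal{J}(ef,cb)&=0,\label{eq:jac2}
\end{align}
a classical Yang--Baxter type relation for the linking-number kernel. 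Since the relabelling $a\leftrightarrow b$, $c\leftrightarrow d$, $e\leftrightarrow f$ combined with the symmetry $\mathcal{J}(rx,sy)=\mathcal{J}(xr,ys)$ interchanges \eqref{eq:jac1} and \eqref{eq:jac2}, it is enough to establish one of them.

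The hard part is precisely the verification of \eqref{eq:jac1}, and here the only tool is a direct case analysis. The value $\mathcal{J}(rx,sy)$ depends only on the cyclic order of $r,x,s,y$ on $S^1$, lies in $\{0,\pm\tfrac12,\pm1\}$, and is the algebraic intersection number of the oriented chords $rx$ and $sy$ (a shared endpoint contributing $\pm\tfrac12$); so \eqref{eq:jac1} is a finite assertion about all cyclic arrangements of the six points $a,b,c,d,e,f$, degenerate arrangements with coincidences included. I would cut the number of genuinely distinct cases down sharply using the symmetries of $\mathcal{J}$ --- invariance under orientation-preserving rotations of $S^1$, the behaviour under the orientation-reversing involution, and skew-symmetry --- and then check \eqref{eq:jac1} in each remaining representative configuration by substituting the explicit values of the linking numbers. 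This is the step to expect to be laborious rather than conceptually deep; it is the content of F. Labourie's ``direct computations''.
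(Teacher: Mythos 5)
Your outline is correct and is essentially the approach taken in the cited source \cite{L12}; the present paper gives no proof of this theorem, only the remark that Labourie established it ``by direct computations.'' Your reduction---well-definition via $\mathcal{J}(rr,sy)=\mathcal{J}(rx,ss)=0$, skew-symmetry of $\mathcal{J}$, the Jacobiator being a triderivation, the collapse onto the two monomial orbits $af\cdot cb\cdot ed$ and $ad\cdot cf\cdot eb$, and the two resulting scalar identities in the linking numbers to be checked by a (symmetry-reduced) case analysis on the cyclic order of $a,b,c,d,e,f$---is exactly what those direct computations amount to. A small slip: the six monomials $af\cdot cb\cdot ed$, etc.\ are cubic, not quartic, in the pair generators $xy$.
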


\begin{defn}{\sc[swapping algebra]}
{\em The swapping algebra of $\mathcal{P}$} is the ring $\mathcal{Z}(\mathcal{P})$ equipped with the swapping bracket, denoted by $(\mathcal{Z}(\mathcal{P}),\{\cdot, \cdot\})$.
\end{defn}

\subsection{Swapping multifraction algebra}
In this subsection, we consider the rings related to $\mathcal{Z}(\mathcal{P})$ and their compatibilities with the swapping bracket.
\begin{defn}{\sc[closed under swapping bracket]}
For a ring $R$, if $\forall a,b \in R$, we have $\{a,b\} \in R$, then we say that $R$ is {\em closed under swapping bracket}.
\end{defn}

 Since $\mathcal{Z}(\mathcal{P})$ is an integral domain, let $\mathcal{Q}(\mathcal{P})$ be the total fraction ring of $\mathcal{Z}(\mathcal{P})$.
  By Leibniz's rule, we have $\{a, \frac{1}{b}\} = -\frac{\{a, b\}}{b^2} $, thus the swapping bracket is well defined on $\mathcal{Q}(\mathcal{P})$. Therefore we have
the following definition.
\begin{defn}{\sc[swapping fraction algebra of $\mathcal{P}$]}
The {\em swapping fraction algebra of $\mathcal{P}$} is the ring $\mathcal{Q}(\mathcal{P})$ equipped with the induced swapping bracket, denoted by $(\mathcal{Q}(\mathcal{P}),\{\cdot, \cdot\})$.
\end{defn}
\begin{defn}{\sc[Cross fraction]}
Let $x,y,z,t$ belong to $\mathcal{P}$ so that $x \neq t$ and $y \neq z$. The {\em cross fraction} determined by $(x,y,z,t)$ is the element of $\mathcal{Q}(\mathcal{P})$:
\begin{equation}
[x,y,z,t] :=\frac{xz}{xt} \cdot \frac{yt}{yz}.
\end{equation}
\end{defn}

\begin{remark}
\label{cfraction}
Notice that the cross fractions verify the following {\em cross-ratio conditions} \cite{L07}:

Symmetry: $[a,b,c,d] = [b,a,d,c]$,

Normalisation: $[a,b,c,d] = 0$ if and only if $a=c$ or $b=d$,

Normalisation: $[a,b,c,d] = 1$ if and only if $a=b$ or $c=d$,

Cocycle identity: $[a,b,c,d] \cdot [a,b,d,e] = [a,b,c,e]$,

Cocycle identity: $[a,b,d,e] \cdot [b,c,d,e] = [a,c,e,f]$.

\end{remark}

Let $\mathcal{CR}(\mathcal{P}) = \{[x,y,z,t] \in \mathcal{Q}(\mathcal{P}) \; |\; \forall x,y,z,t \in \mathcal{P}, \; x \neq t, y \neq z\}$ be the set of all the cross-fractions in $\mathcal{Q}(\mathcal{P})$.
Let $\mathcal{B}(\mathcal{P})$ be the subring of $\mathcal{Q}(\mathcal{P})$ generated by $\mathcal{CR}(\mathcal{P})$.
\begin{prop}\label{thm2.11}
The ring $\mathcal{B}(\mathcal{P})$ is closed under swapping bracket.
\end{prop}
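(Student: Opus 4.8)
The plan is to verify closure on the ring generators $\mathcal{CR}(\mathcal{P})$ and then spread it to all of $\mathcal{B}(\mathcal{P})$ using that the swapping bracket is a biderivation. Every element of $\mathcal{B}(\mathcal{P})$ is a $\mathbb{K}$-linear combination of finite products of cross fractions; expanding $\{P,Q\}$ by bilinearity and by repeated application of Leibniz's rule in each slot rewrites it as a $\mathbb{K}$-linear combination of products of cross fractions, each such product multiplied by a single factor of the form $\{[x,y,z,t],[a,b,c,d]\}$. Since $\mathcal{B}(\mathcal{P})$ is a ring, it therefore suffices to show $\{[x,y,z,t],[a,b,c,d]\}\in\mathcal{B}(\mathcal{P})$ for any two cross fractions. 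If either factor is $0$ there is nothing to prove; so by the normalisation in Remark \ref{cfraction} we may assume $x\neq z$, $y\neq t$, $a\neq c$, $b\neq d$, which together with the standing hypotheses $x\neq t$, $y\neq z$, $a\neq d$, $b\neq c$ makes $xz, xt, yz, yt, ac, ad, bc, bd$ nonzero generators of the integral domain $\mathcal{Z}(\mathcal{P})$, hence invertible in $\mathcal{Q}(\mathcal{P})$.

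The computational heart is a logarithmic-derivative identity. Write $f:=[x,y,z,t]$ and $g:=[a,b,c,d]$ as products of invertible generators, $f=\prod_i e_i^{\epsilon_i}$ and $g=\prod_j g_j^{\delta_j}$ with $\epsilon_i,\delta_j\in\{\pm1\}$ (four factors each). Using $\{u^{-1},v\}=-u^{-2}\{u,v\}$ and Leibniz's rule, one proves by induction on the number of factors that
\begin{equation*}
\{f,g\}\;=\;f\,g\sum_{i,j}\epsilon_i\,\delta_j\,\frac{\{e_i,g_j\}}{e_i\,g_j}.
\end{equation*}
If $e_i$ is the generator $pq$ and $g_j$ is the generator $uv$, then $\{pq,uv\}=\mathcal{J}(pq,uv)\,pv\cdot uq$ by definition, so
\begin{equation*}
\frac{\{e_i,g_j\}}{e_i\,g_j}\;=\;\mathcal{J}(pq,uv)\cdot\frac{pv\cdot uq}{pq\cdot uv}\;=\;\mathcal{J}(pq,uv)\cdot[p,u,v,q].
\end{equation*}

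It remains to check that each $[p,u,v,q]$ appearing with a nonzero coefficient is genuinely a member of $\mathcal{CR}(\mathcal{P})$, i.e. that $p\neq q$ and $u\neq v$. If $p=q$ then $\sigma(p-q)=0$ and every term of $\mathcal{J}(pq,uv)$ vanishes; if $u=v$ then the two triple products in the definition of $\mathcal{J}(pq,uv)$ coincide and again $\mathcal{J}(pq,uv)=0$. Hence whenever $\mathcal{J}(pq,uv)\neq0$ the symbol $[p,u,v,q]$ is a well-defined cross fraction, and $\mathcal{J}(pq,uv)\in\{0,\pm1,\pm\frac{1}{2}\}\subseteq\mathbb{K}$. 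Consequently $\{f,g\}$ is $f g$ times a $\mathbb{K}$-linear combination of cross fractions; as $fg$ is itself a product of two cross fractions, $\{f,g\}\in\mathcal{B}(\mathcal{P})$, and the reduction above finishes the argument. The only point needing care is this last matching: the vanishing of the linking number must exactly absorb every degenerate index pair, so that no ill-defined symbol $[p,u,v,q]$ ever survives with a nonzero coefficient.
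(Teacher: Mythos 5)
Your proof is correct and takes essentially the same route as the paper: reduce by Leibniz bilinearity to the bracket of two cross fractions, invoke the logarithmic-derivative identity, and identify each term $\frac{\{e_i,g_j\}}{e_i g_j}$ as a linking number times a cross fraction. Your explicit handling of the degenerate cases (discarding zero cross fractions at the outset, and verifying that $\mathcal{J}(pq,uv)=0$ whenever $[p,u,v,q]$ would fail to be a legitimate cross fraction) makes precise a point the paper leaves implicit, which is a welcome improvement.
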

\begin{proof}
By Leibniz's rule, $\forall c_1,..., c_n, d_1,...,d_m \in \mathcal{Z}(\mathcal{P})$
\begin{equation}
\frac{\{c_1\cdot \cdot \cdot c_n, d_1\cdot \cdot \cdot d_m\}}{c_1\cdot \cdot \cdot c_n\cdot d_1\cdot \cdot \cdot d_m} = \sum_{i=1}^{n} \sum_{j=1}^{m} \frac{\{c_i, d_j\}}{c_i\cdot  d_j},
\end{equation}
we only need to show that for any two elements $[x,y,z,t]$ and $[u,v,w,s]$ in $\mathcal{CR}(\mathcal{P})$,
where $x\neq t$, $y\neq z$, $u\neq s$, $v\neq w$ in $\mathcal{P}$, then $\{\frac{xz}{xt}\cdot \frac{yt}{yz} , \frac{uw}{us} \cdot \frac{vs}{vw}\} \in \mathcal{B}(\mathcal{P})$. Let $e_1 = xz$, $e_2 = \frac{1}{xt}$, $e_3 = yt$, $e_4 = \frac{1}{yz}$, $h_1 = uw$, $h_2 = \frac{1}{us}$, $h_3 = vs$, $h_4 = \frac{1}{vw}$. By the definition of the swapping bracket, we have $$\frac{\{e_1, h_1\}}{e_1 \cdot h_1} = \mathcal{J}(xz,uw)\cdot \frac{xw}{xz} \cdot \frac{uz}{uw} \in \mathcal{B}(\mathcal{P}).$$ Then by the Leibniz's rule, we deduce that for any $e,h \in \mathcal{Z}(\mathcal{P})$, we have
$$\frac{\{e, \frac{1}{h}\}}{e/h} = -\frac{\{e, h\}}{e\cdot h},\; \frac{\{\frac{1}{e}, h\}}{h/e} = -\frac{\{e, h\}}{e\cdot h},\; \frac{\{\frac{1}{e}, \frac{1}{h}\}}{1/(eh)} = \frac{\{e, h\}}{e\cdot h}. $$
So for any $i,j = 1,2,3,4$, we have $\frac{\{e_i, h_j\}}{e_i \cdot h_j} \in \mathcal{B}(\mathcal{P})$. Since $e_1 e_2 e_3 e_4$ and $h_1 h_2 h_3 h_4$ are also in $\mathcal{B}(\mathcal{P})$, so
$$\{e_1 e_2 e_3 e_4, h_1 h_2 h_3 h_4\}  = \sum_{i=1}^{4}\sum_{j=1}^{4} \frac{\{e_i, h_j\}}{e_i \cdot h_j}\cdot (e_1 e_2 e_3 e_4 h_1 h_2 h_3 h_4) \in \mathcal{B}(\mathcal{P}).$$
Finally, we conclude that $\mathcal{B}(\mathcal{P})$ is closed under swapping bracket.
\end{proof}
\begin{defn}{\sc[swapping multifraction algebra of $\mathcal{P}$]}
 {\em The swapping multifraction algebra of $\mathcal{P}$} is the ring $\mathcal{B}(\mathcal{P})$ equipped with the swapping bracket, denoted by $(\mathcal{B}(\mathcal{P}),\{\cdot, \cdot\})$.
\end{defn}

\section{Rank $n$ swapping algebra}
\label{rsa}
Swapping algebra $\left(\mathcal{Z}(\mathcal{P}),\{\cdot, \cdot\}\right)$ corresponds to $\bigcup_{n=2}^{\infty}H_n(S)$. In this section, we define the rank $n$ swapping algebra $\mathcal{Z}_n(\mathcal{P})$, in order to restrict the correspondence for a fixed $n$. In theorem \ref{rwdefined}, we will prove that the ring $\mathcal{Z}_n(\mathcal{P})$ is compatible with the swapping bracket.
\subsection{The rank $n$ swapping ring $\mathcal{Z}_n(\mathcal{P})$}
\label{section 2.3}
\begin{notation}
Let $$\Delta((x_1, ..., x_{n+1}), (y_1,..., y_{n+1})) = \det \left(\begin{array}{lcr}
                                        x_1 y_1 & ... & x_1 y_{n+1} \\
                                        ... & ... & ... \\
                                        x_{n+1} y_1 & ... & x_{n+1} y_{n+1}
                                      \end{array} \right).$$
\end{notation}
Inspired by linear algebra for $\mathbb{R}^n$, and the space of the rank $n$ cross-ratios identified with the Hitchin component $H_n(S)$ \cite{L07}, we define the rank $n$ swapping ring as follows.

\begin{defn}{\sc[The rank $n$ swapping ring $\mathcal{Z}_n(\mathcal{P})$]}
For $n\geq 2$, let $R_n(\mathcal{P})$ be the ideal of $\mathcal{Z}(\mathcal{P})$ generated by  $\{ D \in \mathcal{Z}(\mathcal{P}) \; \vert  \; D = \Delta((x_1, ..., x_{n+1}), (y_1,..., y_{n+1})) ,  \forall x_1, ..., x_{n+1}, y_1,..., y_{n+1} \in \mathcal{P} \,\}$.

The {\em rank $n$ swapping ring} $\mathcal{Z}_n(\mathcal{P})$ is the quotient ring $\mathcal{Z}(\mathcal{P})/R_n(\mathcal{P})$.
\end{defn}
\begin{remark}
Decomposing the determinant $D$ in the first row, we have by induction that
\begin{equation}
R_2(\mathcal{P})\supseteq R_3(\mathcal{P}) \supseteq... \supseteq R_n(\mathcal{P}).
\end{equation}
\end{remark}

\subsection{Swapping bracket over $\mathcal{Z}_n(\mathcal{P})$}
We will prove by induction the first fundamental theorem of the rank $n$ swapping algebra.
\begin{thm}
\label{rwdefined}{\sc[\textbf{First main result}]}
For $n \geq 2$, the ideal $R_n(\mathcal{P})$ is a Poisson ideal with respect to the swapping bracket. Thus the ring $\mathcal{Z}_{n}(\mathcal{P})$ inherits a Poisson bracket from the swapping bracket.
\end{thm}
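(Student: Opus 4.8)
The plan is to show that for any generator $D = \Delta((x_1,\dots,x_{n+1}),(y_1,\dots,y_{n+1}))$ of $R_n(\mathcal{P})$ and any single variable $pq \in \mathcal{Z}(\mathcal{P})$, the bracket $\{pq, D\}$ lies again in $R_n(\mathcal{P})$; by the Leibniz rule this is equivalent to $R_n(\mathcal{P})$ being a Poisson ideal, since $\{a, D\}$ for general $a$ is an $\mathcal{Z}(\mathcal{P})$-linear combination of the $\{pq, D\}$ together with terms already containing the factor $D$. Expanding the determinant along, say, its first column and applying Leibniz, $\{pq, D\}$ becomes a sum over the entries $x_i y_j$ of $D$ of terms $\mathcal{J}(pq, x_i y_j)\cdot (p y_j)\cdot(x_i q)\cdot C_{ij}$, where $C_{ij}$ is the $(i,j)$ cofactor. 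The key observation is that the linking number $\mathcal{J}(pq, x_i y_j)$ only depends on the cyclic position of $p, q$ relative to $x_i, y_j$, so that for $x_i, y_j$ ranging in fixed arcs cut out by $p$ and $q$ the coefficient $\mathcal{J}(pq, x_i y_j)$ is a constant $\pm 1, \pm\frac12, 0$ depending only on which arcs $x_i$ and $y_j$ occupy.

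Next I would partition the index sets: fix the positions of $p$ and $q$ on the circle, which cut $S^1 \setminus \{p,q\}$ into (at most) two open arcs; this induces a splitting of the rows $x_1,\dots,x_{n+1}$ and the columns $y_1,\dots,y_{n+1}$ according to which arc each point lies in (with the degenerate cases $x_i = p$, $x_i = q$, etc., handled separately, where $\mathcal{J}$ picks up half-integer or vanishing values). On each block the coefficient $\mathcal{J}(pq, x_i y_j)$ is locally constant, so $\{pq, D\}$ reassembles, block by block, into a linear combination of expressions of the form $\det$ of an $(n+1)\times(n+1)$ matrix in which one row of $D$ has been replaced by the row $(p y_1,\dots,p y_{n+1})$ and simultaneously one column by $(x_1 q,\dots,x_{n+1} q)^{\mathsf T}$ — more precisely, of ``mixed'' determinants obtained from $D$ by the substitution $x_i y_j \mapsto (p y_j)(x_i q)$ on a sub-block. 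Such a mixed determinant is, after pulling out scalars, again of the shape $\Delta((\dots),(\dots))$ with the enlarged point set $\mathcal{P} \cup \{p,q\} = \mathcal{P}$ (since $p, q \in \mathcal{P}$ already), hence lies in $R_n(\mathcal{P})$. This is where the induction on the corresponding positions of the points on the circle enters: by moving $p$ and $q$ and by relabeling, one reduces the number of distinct arc-configurations to a base case in which all of $x_i, y_j$ lie in one arc, where the identity $\{pq, D\} \equiv 0$ holds up to $R_n(\mathcal{P})$ by direct computation.

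The main obstacle I expect is the bookkeeping in the block decomposition: the coefficient $\mathcal{J}(pq, x_i y_j)$ depends on the \emph{ordered} pair $(x_i, y_j)$ and on the position of $p$ relative to $q$, so the naive sum does not instantly collapse into a single determinant — one must regroup the $(n+1)^2$ summands carefully, combining the ``$p$-row'' substitution and the ``$q$-column'' substitution, and verify that cross terms where $x_i, y_j$ lie in \emph{different} arcs contribute multiples of $(n+1)\times(n+1)$ minors (which vanish modulo $R_n(\mathcal{P})$, or are lower-rank and hence reducible to such) while the diagonal-arc terms assemble into a genuine rank $n+1$ determinant. Handling the boundary cases ($p$ or $q$ coinciding with some $x_i$ or $y_j$, giving $\mathcal{J} = \pm\frac12$) requires checking that the two half-contributions from the two sides of that point average to the correct substitution. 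Once the block combinatorics is organized — which is precisely the inductive step on positions promised before the theorem statement — the conclusion that each piece lies in $R_n(\mathcal{P})$ is immediate from the defining generators, and the inherited Poisson bracket on $\mathcal{Z}_n(\mathcal{P})$ satisfies the Jacobi identity automatically, being a quotient of a Poisson algebra by a Poisson ideal.
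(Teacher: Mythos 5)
Your reduction to computing $\{pq, D\}$ for a single generator $pq$ and a generator $D=\Delta((x_1,\dots,x_{n+1}),(y_1,\dots,y_{n+1}))$ is correct, and so is the cofactor expansion
\[
\{pq,D\}=\sum_{j,l=1}^{n+1}\mathcal{J}(pq,x_j y_l)\,(p y_l)(x_j q)\,(-1)^{j+l}M_{jl},
\]
where $M_{jl}$ is the $n\times n$ minor. The gap lies in what you do next. You invoke only the fact that $\mathcal{J}(pq,x_j y_l)$ is constant on ``arc blocks'' and assert that the sum therefore ``reassembles, block by block,'' into a combination of $(n+1)\times(n+1)$ determinants where a row and a column of $D$ have been substituted ``simultaneously'' on a sub-block. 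That inference does not follow. A block-constant coefficient alone does not turn a rectangular sub-sum of cofactors into a determinant: the partial sum $\sum_{(j,l)\in B}(x_j q)(p y_l)(-1)^{j+l}M_{jl}$ over a proper sub-block $B$ is not itself a determinant, the $n\times n$ minors $M_{jl}$ are not in $R_n(\mathcal{P})$ (the ideal is generated by $(n+1)\times(n+1)$ minors, not smaller ones), and a matrix with some entries replaced by $(p y_l)(x_i q)$ has entries of mixed degree so its determinant is not a generator of $R_n(\mathcal{P})$ either. As written, the argument never actually lands inside $R_n(\mathcal{P})$.

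What is missing is the structural fact that makes the sum collapse into \emph{single} row or \emph{single} column replacements. From the defining formula,
\[
\mathcal{J}(pq, x_j y_l)=\tfrac{1}{2}\sigma(p-q)\bigl(\sigma(p-y_l)\sigma(y_l-q)-\sigma(p-x_j)\sigma(x_j-q)\bigr)=f(y_l)-g(x_j),
\]
i.e.\ the linking number separates as a function of $y_l$ minus a function of $x_j$. Feeding $f(y_l)-g(x_j)$ into the cofactor expansion and resumming over $j$ (resp.\ $l$) produces, for each $l$, the full determinant $\Delta((x_1,\dots,x_{n+1}),(y_1,\dots,y_{l-1},q,y_{l+1},\dots,y_{n+1}))$ weighted by $f(y_l)(p y_l)$, and for each $j$ the determinant $\Delta((x_1,\dots,x_{j-1},p,x_{j+1},\dots,x_{n+1}),(y_1,\dots,y_{n+1}))$ weighted by $-g(x_j)(x_j q)$; each of these is visibly a generator of $R_n(\mathcal{P})$ times a monomial. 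The paper reaches the same single-replacement answer (its $\Delta^{R}(a,b)$ in Lemma~\ref{swcal}) by a different route, namely induction on the number of points on one side of $\overrightarrow{ab}$, using the cocycle identity $\mathcal{J}(ab,xy)-\mathcal{J}(ab',xy)=\mathcal{J}(b'b,xy)$ to peel off one point at a time. Either mechanism would close your gap; without one of them, the ``reassembly into determinants'' step is unsupported. Your final remark that the quotient inherits a Poisson bracket once $R_n(\mathcal{P})$ is shown to be a Poisson ideal is, of course, standard and correct.
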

\begin{proof}
The above theorem is equivalent to say that for any $h \in R_{n}(\mathcal{P})$ and any $f \in \mathcal{Z}(\mathcal{P})$, we have $\{f, h\} \in R_{n}(\mathcal{P})$ where $n \geq 2$.
By the Leibniz's rule of the swapping bracket, it suffices to prove the case where $f = ab \in \mathcal{Z}(\mathcal{P})$, $h = \Delta((x_1, ..., x_{n+1}), (y_1,..., y_{n+1}))$. The points $x_1,...,x_{n+1}$ $(y_1,...,y_{n+1} \; resp. )$ should be different from each other in $\mathcal{P}$, otherwise $h = 0$. Therefore the theorem follows from Lemma \ref{swcal}.
\end{proof}
\begin{figure}\centering
\includegraphics[width=4in]{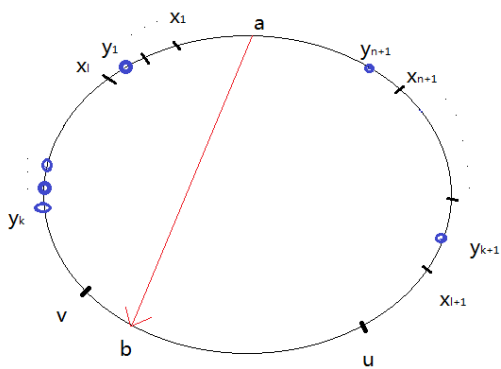}
\caption{\label{swapabdet1} $\{ab, \Delta((x_1, ..., x_{n+1}), (y_1,..., y_{n+1}))\}$.}
\end{figure}
\begin{lem}
\label{swcal}
Let $n \geq 2$. Let $x_1,...,x_{n+1}$ $(y_1,...,y_{n+1} \; resp. )$ in $\mathcal{P}$ be different from each other and ordered anticlockwise, $a,b$ belong to $\mathcal{P}$ and $x_{1},...,x_{l}, y_{1},..., y_{k}$ are on the right side of $\overrightarrow{ab}$ (include coinciding with $a$ or $b$) as illustrated in Figure \ref{swapabdet1}.
  Let $u$ ($v$ resp.) be strictly on the left (right resp.) side of $\overrightarrow{ab}$. Let
 \begin{equation}
   \begin{aligned}
&\Delta^R(a,b)= \sum_{d=1}^l \mathcal{J}(ab, x_d u)\cdot x_d b \cdot \Delta((x_1, ...,x_{d-1}, a, x_{d+1},...,x_{n+1}), (y_1,..., y_{n+1}))
  \\&+ \sum_{d=1}^k \mathcal{J}(ab, u y_d)\cdot a y_d \cdot \Delta((x_1, ...,x_{n+1}), (y_1,...,y_{d-1}, b, y_{d+1},..., y_{n+1})),
  \end{aligned}
 \end{equation}
We obtain that
 \begin{equation}
   \begin{aligned}
   \label{equabdet1}
   &\{ab, \Delta((x_1, ..., x_{n+1}), (y_1,..., y_{n+1}))\}
   = \Delta^R(a,b).
   \end{aligned}
   \end{equation}
\end{lem}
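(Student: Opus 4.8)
The plan is to compute $\{ab,\Delta((x_1,\dots,x_{n+1}),(y_1,\dots,y_{n+1}))\}$ directly from the definitions, using the Laplace (cofactor) expansion of the determinant together with a telescoping property of the linking number, and then to read off $\Delta^R(a,b)$. Write $M$ for the $(n+1)\times(n+1)$ matrix with entries $M_{ij}=x_iy_j$ and $C_{ij}$ for its $(i,j)$-cofactor, so that $\Delta=\det M$ and, for each $i$, $\det M=\sum_{j}M_{ij}C_{ij}$ (symmetrically in rows). Since $\{ab,\cdot\}$ is a derivation, expanding $\det M$ by Leibniz's rule and regrouping the terms according to which entry $x_iy_j$ gets differentiated gives
$$\{ab,\det M\}=\sum_{i,j=1}^{n+1}\{ab,x_iy_j\}\,C_{ij}=\sum_{i,j=1}^{n+1}\mathcal{J}(ab,x_iy_j)\,(ay_j)\,(x_ib)\,C_{ij}.$$
The key elementary observation, read straight off the definition of the linking number, is that $\mathcal{J}(ab,st)=\tfrac12\bigl(g(t)-g(s)\bigr)$ where $g(p):=\sigma(a-b)\,\sigma(a-p)\,\sigma(p-b)$; this telescopes, so that for any auxiliary point $u$ strictly on the left of $\overrightarrow{ab}$ one has $\mathcal{J}(ab,x_iy_j)=\mathcal{J}(ab,x_iu)+\mathcal{J}(ab,uy_j)$.

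I would then substitute this splitting into the double sum and pull out of each inner sum the factors that do not depend on the summation index, obtaining
$$\{ab,\det M\}=\sum_{i=1}^{n+1}\mathcal{J}(ab,x_iu)\,(x_ib)\Bigl(\sum_{j}(ay_j)C_{ij}\Bigr)+\sum_{j=1}^{n+1}\mathcal{J}(ab,uy_j)\,(ay_j)\Bigl(\sum_{i}(x_ib)C_{ij}\Bigr).$$
Here $\sum_{j}(ay_j)C_{ij}$ is the Laplace expansion along row $i$ of the determinant obtained from $M$ by replacing its $i$-th row with $(ay_1,\dots,ay_{n+1})$, namely $\Delta((x_1,\dots,x_{i-1},a,x_{i+1},\dots,x_{n+1}),(y_1,\dots,y_{n+1}))$, and symmetrically $\sum_{i}(x_ib)C_{ij}=\Delta((x_1,\dots,x_{n+1}),(y_1,\dots,y_{j-1},b,y_{j+1},\dots,y_{n+1}))$. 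Finally, because $g$ is constant on each open arc cut out by the chord $\overrightarrow{ab}$, the number $\mathcal{J}(ab,x_iu)=\tfrac12(g(u)-g(x_i))$ vanishes as soon as $x_i$ lies strictly on the left of $\overrightarrow{ab}$, i.e.\ for $i>l$, and likewise $\mathcal{J}(ab,uy_j)=0$ for $j>k$. The two sums therefore collapse to $\sum_{i=1}^{l}$ and $\sum_{j=1}^{k}$, which is exactly $\Delta^R(a,b)$, completing the proof.

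The step I expect to require the most care is the index and sign bookkeeping in the Laplace expansions: checking that $\{ab,\det M\}=\sum_{i,j}\{ab,x_iy_j\}C_{ij}$ comes out with the correct cofactor signs, and that replacing a single row (resp.\ column) of $M$ by $(ay_\bullet)$ (resp.\ by $(x_\bullet b)$) reproduces the determinant $\Delta((\dots,a,\dots),(y_\bullet))$ (resp.\ $\Delta((x_\bullet),(\dots,b,\dots))$) with precisely the sign it has in $\Delta^R(a,b)$. A second point to watch is the degenerate configurations allowed by the statement, in which some $x_i$ or $y_j$ coincides with $a$ or $b$: there $g$ takes the value $0$, the corresponding $\mathcal{J}(ab,x_iu)$ or $\mathcal{J}(ab,uy_j)$ equals $\pm\tfrac12$, and one should check that the $\tfrac12$ built into the linking number gives these boundary terms exactly the right weight rather than spoiling the identity. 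It is also worth noting that $\mathcal{J}(ab,x_iu)$ depends only on the side of $x_i$ relative to $\overrightarrow{ab}$ and not on the choice of $u$ (nor on $v$), so $\Delta^R(a,b)$ is well defined.
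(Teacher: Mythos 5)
Your proposal is correct, and it takes a genuinely different route from the paper's proof. The paper argues by induction on $m=l+k$, the number of points $x_i,y_j$ on the right side of $\overrightarrow{ab}$: the base case $m=0$ is trivial, and the inductive step ``slides'' $b$ clockwise past the first right-side point to an auxiliary position $b'$, compares $\{ab,\Delta\}$ with $\{ab',\Delta\}$ via the cocycle identity $\mathcal{J}(ab,\cdot)-\mathcal{J}(ab',\cdot)=\mathcal{J}(b'b,\cdot)$ applied in the first slot, and shows the discrepancy contributes exactly the new term in $\Delta^R(a,b)$. You instead give a single closed-form computation: expand $\{ab,\Delta\}$ as $\sum_{i,j}\{ab,x_iy_j\}C_{ij}$ by the Leibniz rule, apply the cocycle identity in the \emph{second} slot to split $\mathcal{J}(ab,x_iy_j)=\mathcal{J}(ab,x_iu)+\mathcal{J}(ab,uy_j)$, recognize the resulting inner sums as Laplace expansions of the row- and column-replaced determinants, and then invoke the vanishing of $\mathcal{J}(ab,x_iu)$ (resp.\ $\mathcal{J}(ab,uy_j)$) when $x_i$ (resp.\ $y_j$) lies strictly on the left to truncate the sums at $l$ and $k$. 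Both proofs ultimately rest on the same cocycle property of $\mathcal{J}$, but yours trades the paper's induction for a direct determinant identity, which is arguably shorter and more transparent. Your approach has the further advantage that it immediately yields the remark after the lemma, namely $\Delta^R(a,b)=\Delta^L(a,b)$: since $\{ab,\Delta\}$ is independent of the auxiliary point, choosing $u$ on the other side of $\overrightarrow{ab}$ produces $\Delta^L(a,b)$ by the same computation. The remaining points you flag for care (cofactor signs in the Laplace expansion, the boundary cases $x_i\in\{a,b\}$, and independence of the choice of $u$) are all genuine but routine, and your observation that $\mathcal{J}(ab,st)=\tfrac12(g(t)-g(s))$ with $g(p)=\sigma(a-b)\sigma(a-p)\sigma(p-b)$ is exactly the right lens for all of them.
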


\begin{proof}
The main idea of the proof is to consider the change of $\{ab, \Delta((x_1, ..., x_{n+1}), (y_1,..., y_{n+1}))\}$ when $ab$ moves topologically in the circle with special points $a,b,x_1,...,x_{n+1},y_1,...,y_{n+1}$.

We will prove that
$$\{ab, \Delta((x_1, ..., x_{n+1}), (y_1,..., y_{n+1}))\} = \Delta^R(a,b)$$
   by induction on the number of elements of $\{x_1,...,x_{n+1},y_1,...,y_{n+1}\}$ on the right side of $\overrightarrow{ab}$
 (includes coinciding with $a$ or $b$), which is $m=l+k$. Let $S_{n+1}$ be the permutation group of $\{1,...,n+1\}$, the signature of $\sigma \in S_{n+1}$ denoted by $sgn(\sigma)$, is defined as $1$ if $\sigma$ is even and $-1$ if $\sigma$ is odd. Then we have
\begin{equation}
\Delta((x_1, ..., x_{n+1}), (y_1,..., y_{n+1})) = \sum_{\sigma \in S_{n+1}} sgn(\sigma) \prod_{i=1}^{n+1}  x_{i} y_{\sigma(i)}.
\end{equation}
By the Leibniz's rule, we obtain that
\begin{equation}
\begin{aligned}
\label{equabdet}
&\{ab, \Delta((x_1, ..., x_{n+1}), (y_1,..., y_{n+1})) \} = \sum_{\sigma \in S_{n+1}} sgn(\sigma) \prod_{i=1}^{n+1}  x_{i} y_{\sigma(i)} \sum_{j=1}^{n+1} \frac{\{ab, x_j y_{\sigma(j)}\}}{x_j y_{\sigma(j)}}
\\&= \sum_{\sigma \in S_{n+1}} sgn(\sigma) \prod_{i=1}^{n+1}  x_{i} y_{\sigma(i)} \left( \sum_{j=1}^{n+1} \frac{\mathcal{J}(ab, x_j y_{\sigma(j)})\cdot a y_{\sigma(j)} \cdot x_j b}{x_j y_{\sigma(j)}}\right).
\end{aligned}
\end{equation}

\begin{figure}\centering
\includegraphics[width=4in]{swapabdetaxi.png}
\caption{\label{swapabdetaxi} $m=0$.}
\end{figure}

When $m=0$ as illustrated in Figure \ref{swapabdetaxi}, since $\mathcal{J}(ab, x_j y_{\sigma(j)})=0$, we have $\{ab, x_j y_{\sigma(j)}\}=0$ for any $j=1,...,n+1$ and any $\sigma \in S_{n+1}$. By Equation \ref{equabdet}, we have
$$\{ab, \Delta((x_1, ..., x_{n+1}), (y_1,..., y_{n+1}))\} =0= \Delta^R(a,b)$$
    in this case.

Suppose $$\{ab, \Delta((x_1, ..., x_{n+1}), (y_1,..., y_{n+1}))\}
   = \Delta^R(a,b)$$ for $m=q \geq0$.

When $m=q+1$, suppose that $x_l$ is the first point of $\{x_1,...,x_{n+1}, y_1,...,y_{n+1}\}$ on the right side of $\overrightarrow{ab}$ (include coinciding with $a$ or $b$) with respect to the clockwise orientation.

\begin{enumerate}
  \item
\begin{figure}\centering
\includegraphics[width=4in]{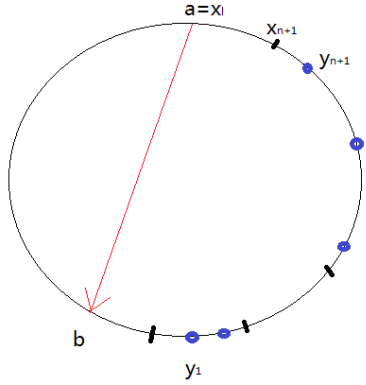}
\caption{\label{Ta} $m=q+1$ case $a= x_l$.}
\end{figure}
If $x_l$ coincides with $a$ as illustrated in Figure \ref{Ta}, then $m=1$. So we have $\mathcal{J}(ab, x_l y_{\sigma(l)})=\frac{1}{2}$ and $\mathcal{J}(ab, x_j y_{\sigma(j)})=0$ for $j \neq l$. By Equation \ref{equabdet}, we have
$$\{ab, \Delta((x_1, ..., x_{n+1}), (y_1,..., y_{n+1}))\}
   =\frac{1}{2} \cdot ab \cdot \Delta((x_1, ..., x_{n+1}), (y_1,..., y_{n+1}))= \Delta^R(a,b).$$

  \item
If $x_l$ does not coincide with $a$, we move $b$ clockwise to the point $b'$, such that $b' \neq x_l$ and the intersection between $\{x_1,...,x_{n+1}, y_1,...,y_{n+1}\}$ and the arc $bb'$ is $x_l$ as illustrated in Figure \ref{T13}.

Then $\{ab', \Delta((x_1, ..., x_{n+1}), (y_1,..., y_{n+1}))\}$ corresponds to the case $m=q$. Thus we have
\begin{equation}
\begin{aligned}
\label{equabdet17}
&\{ab', \Delta((x_1, ..., x_{n+1}), (y_1,..., y_{n+1}))\}
\\&= \sum_{d=1}^{l-1} \mathcal{J}(ab', x_d u)\cdot x_d b \cdot \Delta((x_1, ...,x_{d-1}, a, x_{d+1},...,x_{n+1}), (y_1,..., y_{n+1}))
  \\&+ \sum_{d=1}^k \mathcal{J}(ab', u y_d)\cdot a y_d \cdot \Delta((x_1, ...,x_{n+1}), (y_1,...,y_{d-1}, b, y_{d+1},..., y_{n+1})).
  \end{aligned}
  \end{equation}

\begin{figure}\centering
\includegraphics[width=4in]{T13.png}
\caption{\label{T13} $m=q+1$ case $a \neq x_l$.}
\end{figure}

On the other hand, by Equation \ref{equabdet},
\begin{equation}
\begin{aligned}
&\{ab', \Delta((x_1, ..., x_{n+1}), (y_1,..., y_{n+1}))\}
\\&= \sum_{\sigma \in S_{n+1}} sgn(\sigma) \prod_{i=1}^{n+1}  x_{i} y_{\sigma(i)} \left( \sum_{j=1}^{n+1} \frac{\mathcal{J}(ab', x_j y_{\sigma(j)})\cdot a y_{\sigma(j)} \cdot x_j b'}{x_j y_{\sigma(j)}}\right)
\end{aligned}
\end{equation}
 is a polynomial of $ab', x_1 b',...,x_{n+1} b'$, denoted by $P(ab', x_1 b',...,x_{n+1} b')$. Then
 $$P(ab, x_1 b,...,x_{n+1} b) = \sum_{\sigma \in S_{n+1}} sgn(\sigma) \prod_{i=1}^{n+1}  x_{i} y_{\sigma(i)} \left(\sum_{j=1}^{n+1} \frac{\mathcal{J}(ab', x_j y_{\sigma(j)})\cdot a y_{\sigma(j)} \cdot x_j b}{x_j y_{\sigma(j)}}\right).$$
 By the cocycle identity \cite{L12}:  $\mathcal{J}(ab,xy) - \mathcal{J}(ab',xy) = \mathcal{J}(b'b,xy)$, we have
\begin{equation}
\begin{aligned}
\label{equabdet19}
&\{ab, \Delta((x_1, ..., x_{n+1}), (y_1,..., y_{n+1}))\} - P(ab, x_1 b,...,x_{n+1} b)
\\&= \sum_{\sigma \in S_{n+1}} sgn(\sigma) \prod_{i=1}^{n+1}  x_{i} y_{\sigma(i)} \left(\sum_{j=1}^{n+1} \frac{\left(\mathcal{J}(ab, x_j y_{\sigma(j)}) - \mathcal{J}(ab', x_j y_{\sigma(j)})\right)\cdot a y_{\sigma(j)} \cdot x_j b}{x_j y_{\sigma(j)}}\right)
\\& = \sum_{\sigma \in S_{n+1}} sgn(\sigma) \prod_{i=1}^{n+1}  x_{i} y_{\sigma(i)} \left(\sum_{j=1}^{n+1} \frac{\mathcal{J}(b'b, x_j y_{\sigma(j)}) \cdot a y_{\sigma(j)} \cdot x_j b}{x_j y_{\sigma(j)}}\right)
\end{aligned}
\end{equation}
Since $\mathcal{J}(b'b, x_j y_{\sigma(j)}) = 0$ when $j \neq l$, $\mathcal{J}(b'b, x_l y_{\sigma(l)}) = \mathcal{J}(ab, x_l u)$.
So the above sum equals to
\begin{equation}
\begin{aligned}
\label{equabdet20}
&\sum_{\sigma \in S_{n+1}} sgn(\sigma) \prod_{i=1}^{n+1}  x_{i} y_{\sigma(i)} \cdot \frac{\mathcal{J}(b'b, x_l y_{\sigma(l)}) \cdot a y_{\sigma(l)} \cdot x_l b}{x_l y_{\sigma(l)}}
\\& = \mathcal{J}(ab, x_l u)  \cdot x_l b \cdot \Delta((x_1, ...,x_{l-1}, a, x_{l+1},..., x_{n+1}), (y_1,..., y_{n+1})).
\end{aligned}
\end{equation}
Since $\mathcal{J}(ab', x_d u) = \mathcal{J}(ab, x_d u)$ for $d =1,...,l-1$, $\mathcal{J}(ab', u y_d) = \mathcal{J}(ab, u y_d)$ for $d = 1,...,k$, by Equations \ref{equabdet17} \ref{equabdet19} \ref{equabdet20},
we have $$\{ab, \Delta((x_1, ..., x_{n+1}), (y_1,..., y_{n+1}))\} = \Delta^R(a,b)
$$
in this case.
\end{enumerate}

When $y_k$ is the first point of $\{x_1,...,x_{n+1}, y_1,...,y_{n+1}\}$ on the right side of $\overrightarrow{ab}$ (include coinciding with $a$ or $b$) with respect to clockwise orientation, the result follows the similar argument.
By induction, we have $$\{ab, \Delta((x_1, ..., x_{n+1}), (y_1,..., y_{n+1}))\} = \Delta^R(a,b)
$$
in general.

  Finally, we conclude that $$\{ab, \Delta((x_1, ..., x_{n+1}), (y_1,..., y_{n+1}))\} = \Delta^R(a,b).$$
\end{proof}
\begin{remark}
We can also consider $\Delta^L(a,b)$ similar to $\Delta^R(a,b)$ with respect to the left side of $\overrightarrow{ab}$,
The equation $\Delta^R(a,b) =  \Delta^L(a,b)$ follows
\begin{equation}
  \begin{aligned}
  \label{equabxy}
  &\sum_{i=1}^{n+1} a y_i \cdot \Delta((x_1, ..., x_{n+1}), (y_1,..., y_{i-1}, b, y_{i+1},..., y_{n+1}))
  \\&= \sum_{i=1}^{n+1} x_i b \cdot \Delta((x_1,..., x_{i-1}, a, x_{i+1},..., x_{n+1}), (y_1, ..., y_{n+1})).
  \end{aligned}
  \end{equation}
\end{remark}

\begin{example}
\begin{figure}\centering
\includegraphics[width=4in]{swapabdet3.png}
\caption{\label{swapabdet3} Example.}
\end{figure}
As shown in Figure \ref{swapabdet3}, we have
\begin{equation}
\{xz, \Delta((x, z, y), (z,x,t))\} = - xt \cdot \Delta((x, z, y), (z,x,z))  = 0.
\end{equation}
\end{example}
Therefore, the swapping bracket over $\mathcal{Z}_{n}(\mathcal{P})$ is well defined for $n \geq 2$.

\begin{defn}{\sc[rank $n$ swapping algebra of $\mathcal{P}$]}
For $n \geq 2$, the {\em rank $n$ swapping algebra of $\mathcal{P}$} is the rank $n$ swapping ring $\mathcal{Z}_n(\mathcal{P})$ equipped with the swapping bracket, denoted by $(\mathcal{Z}_n(\mathcal{P}), \{\cdot, \cdot\})$.
\end{defn}

\section{The ring $\mathcal{Z}_n(\mathcal{P})$ is an integral domain}
\label{sectionint}
In this section, we will show that the cross fractions are well-defined in the fraction ring of $\mathcal{Z}_n(\mathcal{P})$, by proving that the ring $\mathcal{Z}_n(\mathcal{P})$ is an integral domain. The strategy of the proof is the following. First, we introduce a geometric model studied by H. Weyl \cite{W39} and C. D. Concini and C. Procesi \cite{CP76} to characterize the ring $\mathcal{Z}_n(\mathcal{P})$ as a $\operatorname{GL}(n,\mathbb{K})$-module. Then, we transfer the integrality of the ring $\mathcal{Z}_n(\mathcal{P})$ to another ring $K_{n,p}^{\operatorname{GL}(n,\mathbb{K})}$ by an injective ring homomorphism. The homomorphism is recovered by a long exact sequence of Lie group cohomology with values in $\operatorname{GL}(n,\mathbb{K})$-modules by Proposition \ref{propcohseq}. In the end, we prove that the ring $K_{n,p}$ is integral, which will complete the proof of the theorem.
\subsection{A geometric model for $\mathcal{Z}_n(\mathcal{P})$}
Let us introduce a geometric model to characterize $\mathcal{Z}_n(\mathcal{P})$.
 Let $M_{n,p} = (\mathbb{K}^n \times \mathbb{K}^{n*})^p$ be the space of $p$ vectors in $\mathbb{K}^n$ and $p$ co-vectors in $\mathbb{K}^{n*}$.
\begin{notation}
\label{groupaction}
Let $a_i = \left(a_{i,1},...,a_{i,n}\right)^T$, $b_i = \sum_{l=1}^n b_{i,l} \sigma_l$ where $a_{i,l}, b_{i,l}\in \mathbb{K}$, $\sigma_l \in \mathbb{K}^{n*}$ and $\sigma_l(a_i) = a_{i,l}$.
We define the product between a vector $a_i$ in $\mathbb{K}^n$ and a co-vector $b_j$ in $\mathbb{K}^{n*}$ by
   \begin{equation}
   \left<a_i|b_j\right> := b_j(a_i) = \sum_{k=1}^n a_{i,l} \cdot b_{j,l}.
   \end{equation}

 The group $\operatorname{GL}(n,\mathbb{K})$ acts naturally on the vectors and the covectors by
 $$g \circ a_i := g \cdot \left(a_{i,1},...,a_{i,n}\right)^T,$$
 $$g \circ b_j :=  \left(b_{i,1},...,b_{i,n}\right) \cdot (g^{-1}) \cdot \left(\sigma_1,...,\sigma_n\right)^T$$
  where $T$ is the transpose of the matrix.
   When we consider the action on their products, we write $b_j = \left(b_{i,1},...,b_{i,n}\right)^T$ in column as $a_i$, then
   $$g \circ b_j :=  (g^{-1})^T \cdot \left(b_{i,1},...,b_{i,n}\right)^T.$$
   For any $g \in \operatorname{GL}(n,\mathbb{K})$, $a,b \in \mathbb{K}[M_{n,p}]$,
   $$g \circ (a\cdot b) := (g \circ a) \cdot (g \circ b).$$
   It induces a $\operatorname{GL}(n,\mathbb{K})$ action on $\mathbb{K}[M_{n,p}]$ satisfying:
 \begin{itemize}
   \item For any $g \in \operatorname{GL}(n,\mathbb{K})$, $a,b \in \mathbb{K}[M_{n,p}]$, we have
   $$g \circ (a+b) = g \circ a + g \circ b,$$
   \item For any $g_1,g_2 \in \operatorname{GL}(n,\mathbb{K})$, $a \in \mathbb{K}[M_{n,p}]$, we have
   $$g_1 \circ (g_2 \circ a) = (g_1 \cdot g_2) \circ a.$$
 \end{itemize}

Then the polynomial ring $\mathbb{K}[M_{n,p}]$ is a $\operatorname{GL}(n,\mathbb{K})$-module.
\end{notation}

Let $B_{n\mathbb{K}}$ be the subring of $\mathbb{K}[M_{n,p}]$ generated by $\{\left<a_i|b_j \right>\}_{i=1,j=1}^p$.
We denote the $\operatorname{GL}(n,\mathbb{K})$ invariant ring of $\mathbb{K}[M_{n,p}]$ by $\mathbb{K}[M_{n,p}]^{\operatorname{GL}(n,\mathbb{K})}$.
Since $\left<a_i|b_j\right> \in \mathbb{K}[M_{n,p}]$ is invariant under $\operatorname{GL}(n,\mathbb{K})$ action, we have $B_{n\mathbb{K}} \subseteq \mathbb{K}[M_{n,p}]^{\operatorname{GL}(n,\mathbb{K})}$.
Moreover, C. D. Concini and C. Procesi proved that
   \begin{thm}\label{thmpro}
{\sc[C. D. Concini and C. Procesi \cite{CP76}  \footnotemark[1]]} \footnotetext[1]{Thanks for the reference provided by J. B. Bost.}
$B_{n\mathbb{K}} = \mathbb{K}[M_{n,p}]^{\operatorname{GL}(n,\mathbb{K})}$.
\end{thm}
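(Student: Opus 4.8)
The statement is the first fundamental theorem of invariant theory for $\operatorname{GL}(n)$ acting on the pair of representations $(\mathbb{K}^n,\mathbb{K}^{n*})$, and the plan is to prove it by polarization together with Schur--Weyl duality. The inclusion $B_{n\mathbb{K}}\subseteq \mathbb{K}[M_{n,p}]^{\operatorname{GL}(n,\mathbb{K})}$ is clear, so only the reverse inclusion is at issue. First I would reduce to $\mathbb{K}$ algebraically closed: since $\operatorname{GL}(n)$ is connected and split, $\operatorname{GL}(n,\mathbb{K})$ is Zariski dense in $\operatorname{GL}(n,\overline{\mathbb{K}})$, whence $\mathbb{K}[M_{n,p}]^{\operatorname{GL}(n,\mathbb{K})}=\overline{\mathbb{K}}[M_{n,p}]^{\operatorname{GL}(n,\overline{\mathbb{K}})}\cap\mathbb{K}[M_{n,p}]$ and likewise for $B_{n\mathbb{K}}$, so it suffices to treat $\mathbb{K}=\overline{\mathbb{K}}$ of characteristic $0$.

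Now take $f\in\mathbb{K}[M_{n,p}]^{\operatorname{GL}(n,\mathbb{K})}$. Decomposing $f$ into multihomogeneous components, each remains invariant; testing against the central torus $\{t\cdot\mathrm{Id}\}\subseteq\operatorname{GL}(n,\mathbb{K})$ forces a nonzero component of multidegree $(d_1,\dots,d_p;e_1,\dots,e_p)$ to satisfy $\sum_i d_i=\sum_j e_j=:q$. Applying the Aronhold/Weyl polarization operators, such a component is recovered by restitution (injective up to a nonzero scalar in characteristic $0$) from a fully multilinear invariant $\widetilde f$ on $(\mathbb{K}^n)^{q}\times(\mathbb{K}^{n*})^{q}$. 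Writing $V=\mathbb{K}^n$, such a $\widetilde f$ is precisely an element of $\operatorname{Hom}_{\operatorname{GL}(V)}\!\big(V^{\otimes q}\otimes(V^{*})^{\otimes q},\mathbb{K}\big)$, which is canonically isomorphic to $\operatorname{End}_{\operatorname{GL}(V)}\!\big(V^{\otimes q}\big)$.

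The heart of the argument is to identify $\operatorname{End}_{\operatorname{GL}(V)}(V^{\otimes q})$ with the image $B$ of $\mathbb{K}[S_q]$ acting by permuting tensor factors. For this I would invoke the double centralizer theorem: $B$ is semisimple since $\operatorname{char}\mathbb{K}=0$; its commutant is $\operatorname{End}_{S_q}(V^{\otimes q})$, which by Zariski density of $\operatorname{GL}(V)$ in $\operatorname{End}(V)$ together with the polarization identity $\operatorname{span}\{A^{\otimes q}:A\in\operatorname{End}(V)\}=\operatorname{Sym}^q(\operatorname{End} V)=\operatorname{End}(V^{\otimes q})^{S_q}$ coincides with $\operatorname{span}\{g^{\otimes q}:g\in\operatorname{GL}(V)\}$; hence the bicommutant of $B$ equals $\operatorname{End}_{\operatorname{GL}(V)}(V^{\otimes q})$, and $B$, being semisimple, equals its own bicommutant. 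Transporting this back, every multilinear invariant $\widetilde f$ is a $\mathbb{K}$-linear combination of the contraction functionals $(v_1,\dots,v_q,\phi_1,\dots,\phi_q)\mapsto\prod_{k=1}^q\phi_k(v_{\sigma(k)})$ for $\sigma\in S_q$, each of which is a monomial in the pairings $\langle\cdot\,|\,\cdot\rangle$. Since restitution carries products of pairings to products of pairings, $f$ itself is a polynomial in the $\langle a_i|b_j\rangle$, i.e.\ $f\in B_{n\mathbb{K}}$, which finishes the proof.

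The main obstacle is the Schur--Weyl/double centralizer step in the third paragraph: it is where characteristic zero (semisimplicity of $\mathbb{K}[S_q]$) is genuinely used, and where the density argument producing all of $\operatorname{Sym}^q(\operatorname{End} V)$ from the elements $g^{\otimes q}$ is needed. Everything else --- the reduction to the algebraically closed case, the multidegree bookkeeping, and polarization/restitution --- is routine. As an alternative one could argue geometrically: the $\operatorname{GL}(n)$-invariant morphism $M_{n,p}\to\operatorname{Mat}_p(\mathbb{K})$, $(a_i,b_j)\mapsto(\langle a_i|b_j\rangle)_{i,j}$, has image the determinantal variety of $p\times p$ matrices of rank $\le n$ and generic fibre a single $\operatorname{GL}(n)$-orbit, so invariant functions descend along it; but verifying the image is exactly that determinantal variety and that generic fibres are orbits is of comparable difficulty.
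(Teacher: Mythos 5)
The paper offers no proof of this statement: it is quoted as a theorem of De Concini and Procesi \cite{CP76} (it is the first fundamental theorem of invariant theory for $\operatorname{GL}(n)$ acting on vectors and covectors, going back to Weyl \cite{W39}). Your argument is the classical polarization/restitution plus Schur--Weyl route, and it is essentially correct. Two small points deserve care: (i) the reduction to $\overline{\mathbb{K}}$ needs $\mathbb{K}$ infinite (true here since $\operatorname{char}\mathbb{K}=0$) so that $\operatorname{GL}(n,\mathbb{K})$ is Zariski dense in $\operatorname{GL}(n,\overline{\mathbb{K}})$; and (ii) after the central-torus bookkeeping, you should polarize each $a_i$ into $d_i$ fresh vector slots and each $b_j$ into $e_j$ fresh covector slots, producing a multilinear invariant on $V^{q}\times (V^*)^{q}$ with $q=\sum d_i=\sum e_j$ that is \emph{symmetric} in the slots coming from the same original variable; Schur--Weyl identifies this space with $\mathbb{K}[S_q]$-contractions, and restitution, which is injective on the span of polarizations in characteristic $0$, sends each contraction monomial to a monomial in the $\langle a_i\mid b_j\rangle$. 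With that bookkeeping understood, your proof is complete.

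It is, however, a genuinely different route from the one the paper's citation points to. Your argument relies on characteristic $0$ in two places: restitution requires dividing by factorials, and the double centralizer theorem requires $\mathbb{K}[S_q]$ to be semisimple. The reference \cite{CP76} is titled ``A Characteristic Free Approach to Invariant Theory'' precisely because De Concini--Procesi prove this first fundamental theorem by a straightening-law/standard-monomial argument on the determinantal ring, which works over $\mathbb{Z}$ and hence in all characteristics and over arbitrary base rings. For this paper, where $\operatorname{char}\mathbb{K}=0$ is assumed throughout, your version suffices; but the cited result is strictly stronger, and anyone wishing to relax the characteristic hypothesis (which would also matter for the second fundamental theorem, quoted next from Weyl) would need the standard-monomial approach rather than yours. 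Your alternative geometric sketch at the end (descent along the invariant morphism to the rank-$\le n$ determinantal variety) is closer in spirit to \cite{CP76}, and indeed the nontrivial content there --- that the image is the determinantal variety and that generic fibres are single orbits --- is exactly what their characteristic-free machinery is built to establish.
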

Since $\mathbb{K}[M_{n,p}]$ is an integral domain, they obtained the following corollary.
\begin{cor}
{\sc[C. D. Concini and C. Procesi \cite{CP76}]} The subring $B_{n\mathbb{K}}$ is an integral domain.
\end{cor}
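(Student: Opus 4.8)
The plan is to exploit the fact that $B_{n\mathbb{K}}$ is, by its very definition, a subring of the polynomial ring $\mathbb{K}[M_{n,p}]$, and that $\mathbb{K}[M_{n,p}]$ is an integral domain since $\mathbb{K}$ is a field and $M_{n,p}$ is a finite-dimensional affine space over $\mathbb{K}$. A nonzero subring of an integral domain is again an integral domain, so essentially nothing needs to be done beyond checking that $B_{n\mathbb{K}}$ is nonzero and that the defining inclusion really is a ring inclusion containing the unit.

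Concretely, first I would note that $B_{n\mathbb{K}}$ is generated as a ring by the elements $\left<a_i\mid b_j\right> \in \mathbb{K}[M_{n,p}]$, each of which is a genuine nonzero polynomial (namely $\left<a_i\mid b_j\right> = \sum_{l=1}^n a_{i,l} b_{j,l}$), so $B_{n\mathbb{K}}$ contains the multiplicative identity of $\mathbb{K}[M_{n,p}]$ and is a nonzero commutative unital ring. Second, given any $f, g \in B_{n\mathbb{K}}$ with $fg = 0$, I would read this equation inside $\mathbb{K}[M_{n,p}]$; since the latter has no zero divisors, either $f = 0$ or $g = 0$ there, hence also in $B_{n\mathbb{K}}$. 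This shows $B_{n\mathbb{K}}$ has no zero divisors and is therefore an integral domain.

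There is no real obstacle here: the statement is purely formal once one knows that $\mathbb{K}[M_{n,p}]$ is a domain, and the only point worth a line of care is making explicit that "subring" is understood to contain $1$, so that we are genuinely speaking of an integral domain. If one prefers an intrinsic formulation, Theorem \ref{thmpro} identifies $B_{n\mathbb{K}}$ with the invariant subring $\mathbb{K}[M_{n,p}]^{\operatorname{GL}(n,\mathbb{K})}$, and a ring of invariants of a domain under a group action is again a domain; but this route is unnecessary, and the subring argument above is the shortest.
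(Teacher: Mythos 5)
Your argument is correct and is precisely the paper's own justification: the paper deduces the corollary in one line from the observation that $B_{n\mathbb{K}}$ is a subring of the integral domain $\mathbb{K}[M_{n,p}]$. The extra checks you spell out (nonzero, contains the unit) are routine and consistent with what the paper leaves implicit.
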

H. Weyl describe $B_{n\mathbb{K}}$ as a quotient ring.
\begin{thm}
{\sc[H. Weyl \cite{W39}]} All the relations in $B_{n\mathbb{K}}$ are generated by $R = \{ f \in B_{n\mathbb{K}} \; |  \; f = \det \left(\begin{array}{lcr}
                                        \left<a_{i_1}| b_{j_1}\right> & ... & \left<a_{i_1} |b_{j_{n+1}}\right> \\
                                        ... & ... & ... \\
                                        \left<a_{i_{n+1}} |b_{j_1}\right> & ... & \left<a_{i_{n+1}}| a_{j_{n+1}}\right>
                                      \end{array} \right), \forall i_{k}, j_{l} = 1,...,p \}$.
\end{thm}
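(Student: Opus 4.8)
The statement is the second fundamental theorem of invariant theory for the pair $(\mathbb{K}^{n},\mathbb{K}^{n*})$ under $\operatorname{GL}(n,\mathbb{K})$, and the plan is to deduce it from the first fundamental theorem (Theorem \ref{thmpro}) by a representation-theoretic character count, which is available since $\operatorname{char}\mathbb{K}=0$. First I would introduce the generic $p\times p$ matrix $Z=(z_{ij})$ and the surjective $\mathbb{K}$-algebra map $\varphi\colon\mathbb{K}[z_{ij}]\twoheadrightarrow B_{n\mathbb{K}}$ defined by $z_{ij}\mapsto\langle a_{i}|b_{j}\rangle$; the theorem is precisely the assertion that $\ker\varphi$ equals the ideal $I_{n+1}$ generated by all $(n+1)\times(n+1)$ minors of $Z$. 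Since the matrix $\bigl(\langle a_{i}|b_{j}\rangle\bigr)_{i,j}$ is the product $A^{T}B$ of the $n\times p$ matrix of the vectors $a_{i}$ with the $n\times p$ matrix of the covectors $b_{j}$, it has rank at most $n$ over $\mathbb{K}[M_{n,p}]$, so all its $(n+1)$-minors vanish; hence $I_{n+1}\subseteq\ker\varphi$, and only the reverse inclusion needs work. Geometrically this says $\operatorname{Spec}B_{n\mathbb{K}}$ is the image of $M_{n,p}$ in $\mathbb{K}^{p\times p}$, i.e. the irreducible determinantal variety of matrices of rank $\le n$, and what remains is to see that $I_{n+1}$ already cuts it out scheme-theoretically.

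For the reverse inclusion I would argue equivariantly for the natural action of $\operatorname{GL}(p,\mathbb{K})\times\operatorname{GL}(p,\mathbb{K})$ mixing the $a_{i}$, respectively the $b_{j}$ (equivalently $Z\mapsto gZh^{T}$). By the Cauchy decomposition, $\mathbb{K}[M_{n,p}]\cong\bigoplus_{\lambda,\mu}\bigl(\mathbb{S}_{\lambda}(\mathbb{K}^{n*})\otimes\mathbb{S}_{\mu}(\mathbb{K}^{n})\bigr)\otimes\mathbb{S}_{\lambda}(\mathbb{K}^{p})\otimes\mathbb{S}_{\mu}(\mathbb{K}^{p})$, with $\lambda,\mu$ running over partitions with at most $\min(n,p)$ parts; taking $\operatorname{GL}(n,\mathbb{K})$-invariants annihilates the summands with $\lambda\neq\mu$ and leaves a one-dimensional space when $\lambda=\mu$, so by Theorem \ref{thmpro} one obtains the multiplicity-free decomposition $B_{n\mathbb{K}}\cong\bigoplus_{\ell(\lambda)\le n}\mathbb{S}_{\lambda}(\mathbb{K}^{p})\otimes\mathbb{S}_{\lambda}(\mathbb{K}^{p})$ as a $\operatorname{GL}(p,\mathbb{K})\times\operatorname{GL}(p,\mathbb{K})$-module. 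On the other hand $\mathbb{K}[z_{ij}]\cong\bigoplus_{\ell(\lambda)\le p}\mathbb{S}_{\lambda}(\mathbb{K}^{p})\otimes\mathbb{S}_{\lambda}(\mathbb{K}^{p})$, again multiplicity free, and $\varphi$ is equivariant, so Schur's lemma forces $\ker\varphi$ to be exactly the sum of the isotypic components with $\ell(\lambda)\ge n+1$. It therefore suffices to check that $I_{n+1}$ contains all of these components. Now the $(n+1)$-minors span the component $\mathbb{S}_{1^{n+1}}(\mathbb{K}^{p})\otimes\mathbb{S}_{1^{n+1}}(\mathbb{K}^{p})$, and multiplying this by the whole ring and expanding the products $\mathbb{S}_{1^{n+1}}\otimes\mathbb{S}_{\nu}$ by Pieri's rule for vertical strips produces every $\mathbb{S}_{\lambda}(\mathbb{K}^{p})\otimes\mathbb{S}_{\lambda}(\mathbb{K}^{p})$ with $\ell(\lambda)\ge n+1$; combined with $I_{n+1}\subseteq\ker\varphi$ this yields $I_{n+1}=\ker\varphi$.

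The genuine obstacle is this last point: a priori $I_{n+1}$ could be a proper subideal of $\ker\varphi$, and ruling that out amounts to the classical nontrivial fact that the determinantal ideal is prime (equivalently, that $\operatorname{Spec}\mathbb{K}[z_{ij}]/I_{n+1}$ is reduced, irreducible, of the expected dimension). The character count above settles it in characteristic zero. If instead one wanted a characteristic-free proof, I would run the straightening algorithm: the products of minors indexed by pairs of semistandard tableaux of equal shape form a $\mathbb{K}$-basis of $\mathbb{K}[z_{ij}]$, one shows every such product is congruent modulo $I_{n+1}$ to a $\mathbb{K}$-combination of those whose minor factors all have size at most $n$, so these span $\mathbb{K}[z_{ij}]/I_{n+1}$, and then one checks their images remain linearly independent in $B_{n\mathbb{K}}$ (for instance via the decomposition above, or by evaluating at sufficiently generic $a_{i},b_{j}$). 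Either route leaves everything except this single commutative-algebra input purely formal; for the purposes of the present paper one may of course simply cite \cite{W39} or \cite{CP76}.
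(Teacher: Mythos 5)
The paper offers no proof of this statement: it is Weyl's second fundamental theorem of invariant theory for vectors and covectors, and the paper simply cites \cite{W39} (and relies on the formulation of \cite{CP76}); the subsequent remark only restates it as $B_{n\mathbb{K}}\cong W/T$. So there is no in-text argument to compare against, and your sketch is supplying material the paper omits by design.

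That said, your outline is a correct and standard route. The reduction to showing $\ker\varphi=I_{n+1}$, the easy inclusion $I_{n+1}\subseteq\ker\varphi$ from the rank bound on $A^{T}B$, and the $\operatorname{GL}(p)\times\operatorname{GL}(p)$-equivariant Cauchy decomposition identifying $\ker\varphi$ with $\bigoplus_{\ell(\lambda)>n}\mathbb{S}_{\lambda}(\mathbb{K}^{p})\otimes\mathbb{S}_{\lambda}(\mathbb{K}^{p})$ are all right. The one place where the sketch is thinner than it looks is the Pieri step: Pieri's rule only tells you which $\mathbb{S}_{\lambda}\otimes\mathbb{S}_{\lambda}$ \emph{can} occur in $(\mathbb{S}_{1^{n+1}}\otimes\mathbb{S}_{1^{n+1}})\cdot(\mathbb{S}_{\mu}\otimes\mathbb{S}_{\mu})$, not that the multiplication map in $\operatorname{Sym}(\mathbb{K}^{p}\otimes\mathbb{K}^{p})$ is nonzero on those components. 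One still has to verify this nonvanishing (e.g.\ by exhibiting a bideterminant of shape $\lambda$ inside the ideal, or by a leading-term argument), which is precisely the content of the straightening/standard-bitableaux approach you mention as an alternative; it is not an independent check but the missing ingredient. Equivalently, the gap is exactly the primality of the determinantal ideal $I_{n+1}$, which you correctly flag as ``the genuine obstacle.'' Since you name the gap and name a way to close it, and since the paper itself treats the theorem as a black box, the proposal is a reasonable account of the background result rather than a flawed proof.
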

\begin{remark}
In other words, let $W$ be the polynomial ring $\mathbb{K}[\{x_{i,j}\}_{i,j=1}^p]$,
$r = \{ f \in W \; |  \; f = \det \left(\begin{array}{lcr}
                                        x_{i_1 ,j_1} & ... & x_{i_1 ,j_{n+1}} \\
                                        ... & ... & ... \\
                                       x_{i_{n+1}, j_1} & ... & x_{i_{n+1}, j_{n+1}}
                                      \end{array} \right), \forall i_{k}, j_{l} = 1,...,p \}$, let $T$ be the ideal of $W$ generated by $r$, then we have $B_{n\mathbb{K}} \cong W/T$.
\end{remark}

Let us recall that $\mathcal{Z}_n(\mathcal{P}) = \mathcal{Z}(\mathcal{P})/R_n(\mathcal{P})$ is the rank $n$ swapping ring where $\mathcal{P} = \{x_1,...,x_p\}$.
When we identify $a_i$ with $x_i$ on the left and $b_i$ with $x_i$ on the right of the pairs of points in $\mathcal{Z}_n(\mathcal{P})$, we obtain the main result of this subsection below.
\begin{thm}
\label{inv1}
 Let $\mathcal{Z}_n(\mathcal{P})$ be the rank $n$ swapping ring. Let $S_{n\mathbb{K}}$ be the ideal of $B_{n\mathbb{K}}$ generated by $\{\left<a_i|b_i\right>\}_{i=1}^p$, then $B_{n\mathbb{K}}/{S_{n\mathbb{K}}}  \cong \mathcal{Z}_n(\mathcal{P})$.
\end{thm}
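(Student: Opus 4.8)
The plan is to obtain the isomorphism directly from the two fundamental theorems of $\operatorname{GL}(n,\mathbb{K})$-invariant theory recalled above---Theorem \ref{thmpro} (the first fundamental theorem) and H.~Weyl's theorem (the second fundamental theorem)---by exhibiting $B_{n\mathbb{K}}/S_{n\mathbb{K}}$ and $\mathcal{Z}_n(\mathcal{P})$ as the \emph{same} quotient of the polynomial ring $W = \mathbb{K}[\{x_{i,j}\}_{i,j=1}^p]$, where $p = |\mathcal{P}|$.

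First I would record the presentation of $B_{n\mathbb{K}}$. The assignment $x_{i,j}\mapsto \left<a_i\vert b_j\right>$ extends to a ring homomorphism $\psi\colon W \to B_{n\mathbb{K}}$; it is surjective by the first fundamental theorem (Theorem \ref{thmpro}), and its kernel is, by H.~Weyl's theorem, the ideal $T$ generated by the determinants $\det\!\big((x_{i_k,j_l})_{k,l=1}^{n+1}\big)$ over all indices $i_k, j_l \in \{1,\dots,p\}$ (repetitions allowed). Thus $B_{n\mathbb{K}} \cong W/T$, as recorded in the remark following Weyl's theorem. Since $S_{n\mathbb{K}}$ is by definition the ideal of $B_{n\mathbb{K}}$ generated by $\left<a_i\vert b_i\right> = \psi(x_{i,i})$, $i=1,\dots,p$, its preimage under the surjection $\psi$ is $\psi^{-1}(S_{n\mathbb{K}}) = T + I_0$ with $I_0 := \big(\{x_{i,i}\}_{i=1}^p\big) \subseteq W$, and therefore, by the third isomorphism theorem, $B_{n\mathbb{K}}/S_{n\mathbb{K}} \cong W/(T + I_0)$.

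Next I would produce the matching presentation of $\mathcal{Z}_n(\mathcal{P})$. Identifying the ordered pair $x_i x_j$ with the variable $x_{i,j}$ realizes $\mathcal{Z}(\mathcal{P}) = \mathbb{K}[\{x_i x_j\}]/(\{x_i x_i\})$ as $W/I_0$; write $\phi\colon W \to \mathcal{Z}(\mathcal{P})$ for this quotient map, so $\ker\phi = I_0$. Under $\phi$ the element $\Delta\big((x_{i_1},\dots,x_{i_{n+1}}),(x_{j_1},\dots,x_{j_{n+1}})\big)$ of $\mathcal{Z}(\mathcal{P})$ is precisely the image of $\det\!\big((x_{i_k,j_l})\big)$; as $R_n(\mathcal{P})$ is generated by all such $\Delta$, its preimage is the ideal of $W$ generated by $\ker\phi$ together with all the lifts $\det\!\big((x_{i_k,j_l})\big)$, that is $\phi^{-1}(R_n(\mathcal{P})) = T + I_0$---here one uses that the generating family of $T$ furnished by Weyl's theorem and the generating family of $R_n(\mathcal{P})$ match index-for-index. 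Hence $\mathcal{Z}_n(\mathcal{P}) = \mathcal{Z}(\mathcal{P})/R_n(\mathcal{P}) \cong W/\phi^{-1}(R_n(\mathcal{P})) = W/(T+I_0)$, and combining these two isomorphisms yields $B_{n\mathbb{K}}/S_{n\mathbb{K}} \cong \mathcal{Z}_n(\mathcal{P})$.

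The proof is short once the fundamental theorems are in place; it amounts to the bookkeeping identity $\psi^{-1}(S_{n\mathbb{K}}) = T + I_0 = \phi^{-1}(R_n(\mathcal{P}))$ in $W$. The only point that genuinely requires care, and which I would state explicitly, is that no spurious relations creep in: that $\ker\psi$ is exactly $T$ is the content of Weyl's second fundamental theorem, while imposing $x_i x_i = 0$ cannot force any new relation, since $I_0$ is already part of the presentation $W/I_0$ of $\mathcal{Z}(\mathcal{P})$. I would therefore centre the write-up on that single ideal equality.
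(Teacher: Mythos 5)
Your proposal is correct and takes essentially the same route the paper intends: the paper disposes of this theorem in a single sentence (``identify $a_i$ with $x_i$ on the left and $b_i$ with $x_i$ on the right''), relying on the remark $B_{n\mathbb{K}}\cong W/T$ immediately above. Your write-up just makes that identification rigorous by presenting both $B_{n\mathbb{K}}/S_{n\mathbb{K}}$ and $\mathcal{Z}_n(\mathcal{P})$ as $W/(T+I_0)$ via the two surjections $\psi$ and $\phi$ and the third isomorphism theorem, which is exactly the bookkeeping the paper leaves implicit.
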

\label{thm4.4}
\subsection{Proof of the secound main result}
\begin{thm}{\sc[\textbf{Second main result}]}
\label{intdom}
For $n>1$,
$\mathcal{Z}_n(\mathcal{P})$ is an integral domain.
\end{thm}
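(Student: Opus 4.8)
The plan is to reduce, via Theorem~\ref{inv1}, to proving that $B_{n\mathbb{K}}/S_{n\mathbb{K}}$ is an integral domain, and for this to realise $B_{n\mathbb{K}}/S_{n\mathbb{K}}$ as the $\operatorname{GL}(n,\mathbb{K})$-invariant subring of an explicit integral domain. Assume first that $\mathcal{P}=\{x_1,\dots,x_p\}$ is finite (the general statement then reduces to this by a standard filtered-colimit argument over the finite subsets of $\mathcal{P}$). Inside $\mathbb{K}[M_{n,p}]$ let $I$ be the ideal generated by the diagonal pairings $\langle a_1|b_1\rangle,\dots,\langle a_p|b_p\rangle$ and put $K_{n,p}:=\mathbb{K}[M_{n,p}]/I$, the coordinate ring of the incidence variety
\[
N_{n,p}=\bigl\{\,\bigl((a_i,b_i)\bigr)_{i=1}^{p}\in M_{n,p}\ \bigm|\ \langle a_i|b_i\rangle=0\ \text{for}\ i=1,\dots,p\,\bigr\}.
\]
Since the generators of $I$ are invariants, $I$ is $\operatorname{GL}(n,\mathbb{K})$-stable, so $K_{n,p}$ carries a $\operatorname{GL}(n,\mathbb{K})$-action. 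I would then (i) identify $\mathcal{Z}_n(\mathcal{P})\cong B_{n\mathbb{K}}/S_{n\mathbb{K}}$ with $K_{n,p}^{\operatorname{GL}(n,\mathbb{K})}$, and (ii) show that $K_{n,p}$ is an integral domain; as a subring of an integral domain is an integral domain, this completes the proof.

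For step (i) I would apply the invariants functor to the short exact sequence of $\operatorname{GL}(n,\mathbb{K})$-modules
\[
0\longrightarrow I\longrightarrow \mathbb{K}[M_{n,p}]\longrightarrow K_{n,p}\longrightarrow 0
\]
and use the associated long exact sequence of group cohomology supplied by Proposition~\ref{propcohseq}. Because $\operatorname{GL}(n,\mathbb{K})$ is reductive and $\mathbb{K}$ has characteristic zero, $H^{1}(\operatorname{GL}(n,\mathbb{K}),-)$ vanishes on every module in sight, so $\mathbb{K}[M_{n,p}]^{\operatorname{GL}(n,\mathbb{K})}=B_{n\mathbb{K}}$ surjects onto $K_{n,p}^{\operatorname{GL}(n,\mathbb{K})}$ with kernel $I\cap B_{n\mathbb{K}}$. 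A Reynolds-operator argument then shows $I\cap B_{n\mathbb{K}}=S_{n\mathbb{K}}$: if $h=\sum_i g_i\langle a_i|b_i\rangle$ lies in $B_{n\mathbb{K}}$ with $g_i\in\mathbb{K}[M_{n,p}]$, applying the $B_{n\mathbb{K}}$-linear Reynolds projection $\mathcal{R}$ gives $h=\mathcal{R}(h)=\sum_i\mathcal{R}(g_i)\langle a_i|b_i\rangle\in S_{n\mathbb{K}}$, and the opposite inclusion is obvious. Hence the quotient map induces a ring isomorphism $B_{n\mathbb{K}}/S_{n\mathbb{K}}\cong K_{n,p}^{\operatorname{GL}(n,\mathbb{K})}\subseteq K_{n,p}$.

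For step (ii), observe that $\mathbb{K}[M_{n,p}]$ is the tensor product over $\mathbb{K}$ of the $p$ polynomial rings on the disjoint coordinate blocks of the pairs $(a_i,b_i)$, and $\langle a_i|b_i\rangle$ involves only the $i$-th block; therefore
\[
K_{n,p}\cong C^{\otimes_{\mathbb{K}}p},\qquad C:=\mathbb{K}[z_1,\dots,z_n,w_1,\dots,w_n]/(z_1w_1+\dots+z_nw_n).
\]
For $n\geq 2$ the form $z_1w_1+\dots+z_nw_n$ is a non-degenerate quadratic form in $2n\geq 4$ variables, hence irreducible even after base change to $\overline{\mathbb{K}}$ (a quadric that splits off a linear factor has rank at most $2$); so $C$ is a geometrically integral $\mathbb{K}$-algebra. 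A tensor product over $\mathbb{K}$ of geometrically integral $\mathbb{K}$-algebras is geometrically integral — after extending scalars to $\overline{\mathbb{K}}$ it becomes the coordinate ring of a finite product of irreducible reduced affine varieties over an algebraically closed field, which is irreducible and reduced — so $K_{n,p}$ is an integral domain, and with it $\mathcal{Z}_n(\mathcal{P})\cong K_{n,p}^{\operatorname{GL}(n,\mathbb{K})}$.

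I expect the main obstacle to be step (i): setting up the short exact sequence of $\operatorname{GL}(n,\mathbb{K})$-modules so that the long exact cohomology sequence of Proposition~\ref{propcohseq} applies, checking the vanishing of $H^{1}$ in the relevant cohomology, and verifying that $I\cap B_{n\mathbb{K}}$ is exactly the ideal $S_{n\mathbb{K}}$ rather than something larger. A secondary subtlety, since $\mathbb{K}$ need not be algebraically closed, is that one must establish \emph{geometric} integrality (not just integrality) of $C$ in step (ii), so that the tensor powers remain domains.
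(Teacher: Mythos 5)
Your proposal is correct and follows essentially the same route as the paper: identify $\mathcal{Z}_n(\mathcal{P})\cong B_{n\mathbb{K}}/S_{n\mathbb{K}}$ via Theorem~\ref{inv1}, embed this into $K_{n,p}^{\operatorname{GL}(n,\mathbb{K})}$ using the long exact cohomology sequence and a Reynolds/averaging argument to identify the invariants of the ideal $L$ with $S_{n\mathbb{K}}$ (the paper's Lemma~\ref{lemgit} does exactly this via Weyl's unitary trick), and then show $K_{n,p}$ is a domain. The only differences are cosmetic: the paper needs only the injectivity of $\theta$ coming from exactness at $\mathbb{K}[M_{n,p}]^{\operatorname{GL}(n,\mathbb{K})}$, so it never invokes vanishing of $H^1$ and never claims an isomorphism (your extra claim is true but unnecessary for the conclusion); and for the integrality of $K_{n,p}$ you phrase it as geometric integrality of $C^{\otimes p}$ with the quadric rank argument, whereas the paper does the equivalent induction on $p$ with an elementary degree-count to show each $\sum_k a_{i,k}b_{i,k}$ is irreducible. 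Both formulations deliver the same result; yours is slightly slicker, the paper's is more self-contained.
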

Firstly, let us first consider the following $\operatorname{GL}(n,\mathbb{K})$-modules:
   \begin{enumerate}
   \item Let $L$ be the ideal of $\mathbb{K}[M_{n,p}]$ generated by $(\{\left<a_i| b_i \right>\}_{i=1}^p)$,
   \item  let $K_{n,p}$ be the quotient ring $\mathbb{K}[M_{n,p}]/L$,
   \item let $S_{n\mathbb{K}}$ be the ideal of $B_{n\mathbb{K}}$ generated by $\{\left<a_i|b_i \right>\}_{i=1}^p$.
   \end{enumerate}

Thus there is an exact sequence of $\operatorname{GL}(n,\mathbb{K})$-modules (the right arrows are not only module homomorphisms, but also ring homomorphisms):
\begin{equation}
\label{exactseq}
0 \rightarrow L \rightarrow \mathbb{K}[M_{n,p}] \rightarrow K_{n,p} \rightarrow 0.
\end{equation}
By Lie group cohomology \cite{CE48}, the exact sequence above induces the long exact sequence :
\begin{equation}
\label{long}
0 \rightarrow L^{\operatorname{GL}(n,\mathbb{K})} \rightarrow \mathbb{K}[M_{n,p}]^{\operatorname{GL}(n,\mathbb{K})} \rightarrow K_{n,p}^{\operatorname{GL}(n,\mathbb{K})} \rightarrow H^1(\operatorname{GL}(n,\mathbb{K}),L)\rightarrow... .
\end{equation}

\begin{lem}
\label{lemgit}
Let $S$ be the finite subset $\{\left<a_i|b_i \right>\}_{i=1}^p$. Let $\mathbb{K}$ be a field of characteristic $0$. Then $$(\mathbb{K}[M_{n,p}]\cdot S)^{\operatorname{GL}(n,\mathbb{K})} = \mathbb{K}[M_{n,p}]^{\operatorname{GL}(n,\mathbb{K})}\cdot S.$$
\end{lem}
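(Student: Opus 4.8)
The plan is to deduce this from the linear reductivity of $\operatorname{GL}(n,\mathbb{K})$ in characteristic zero, via the Reynolds operator on $\mathbb{K}[M_{n,p}]$. One inclusion is formal: each $\langle a_i|b_i\rangle$ is $\operatorname{GL}(n,\mathbb{K})$-invariant (this is already built into $B_{n\mathbb{K}}\subseteq\mathbb{K}[M_{n,p}]^{\operatorname{GL}(n,\mathbb{K})}$), so every element of $\mathbb{K}[M_{n,p}]^{\operatorname{GL}(n,\mathbb{K})}\cdot S$ lies simultaneously in the ideal $\mathbb{K}[M_{n,p}]\cdot S$ and in the invariant ring, hence in $(\mathbb{K}[M_{n,p}]\cdot S)^{\operatorname{GL}(n,\mathbb{K})}$. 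The content is the reverse inclusion.

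For that, I would first record that $\mathbb{K}[M_{n,p}]$ is the union of its finite-dimensional $\operatorname{GL}(n,\mathbb{K})$-submodules, each completely reducible because $\operatorname{GL}(n,\mathbb{K})$ is reductive and $\mathbb{K}$ has characteristic zero; hence there is a $\operatorname{GL}(n,\mathbb{K})$-equivariant decomposition $\mathbb{K}[M_{n,p}]=\mathbb{K}[M_{n,p}]^{\operatorname{GL}(n,\mathbb{K})}\oplus\mathcal{I}$, with $\mathcal{I}$ the canonical $\operatorname{GL}(n,\mathbb{K})$-stable complement (the sum of all nontrivial isotypic components) and associated projection $\rho\colon\mathbb{K}[M_{n,p}]\to\mathbb{K}[M_{n,p}]^{\operatorname{GL}(n,\mathbb{K})}$, the Reynolds operator. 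The key point is that $\rho$ is a homomorphism of $\mathbb{K}[M_{n,p}]^{\operatorname{GL}(n,\mathbb{K})}$-modules: multiplication by an invariant $s$ is $\operatorname{GL}(n,\mathbb{K})$-equivariant, so it preserves both summands and therefore commutes with $\rho$, i.e.\ $\rho(rs)=\rho(r)\,s$. Now, given $f\in(\mathbb{K}[M_{n,p}]\cdot S)^{\operatorname{GL}(n,\mathbb{K})}$, I write $f=\sum_{i=1}^p r_i\langle a_i|b_i\rangle$ with $r_i\in\mathbb{K}[M_{n,p}]$; applying $\rho$ and using $\rho(f)=f$ together with the $\mathbb{K}[M_{n,p}]^{\operatorname{GL}(n,\mathbb{K})}$-linearity of $\rho$ yields $f=\sum_{i=1}^p\rho(r_i)\langle a_i|b_i\rangle$ with each $\rho(r_i)$ invariant, so $f\in\mathbb{K}[M_{n,p}]^{\operatorname{GL}(n,\mathbb{K})}\cdot S$.

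The one place that calls for care is the foundational input: for $\mathbb{K}$ not necessarily algebraically closed one still needs the rational $\operatorname{GL}(n,\mathbb{K})$-module $\mathbb{K}[M_{n,p}]$ to admit a Reynolds operator. In characteristic zero this is standard, since $\operatorname{GL}(n)$ is linearly reductive; alternatively one may base-change to an algebraic closure $\overline{\mathbb{K}}$, prove the identity there, and descend, using that both the formation of $\operatorname{GL}(n,\cdot)$-invariants and the formation of the ideal generated by $S$ commute with the flat extension $\mathbb{K}\hookrightarrow\overline{\mathbb{K}}$. Everything else is routine, and the resulting identity $(\mathbb{K}[M_{n,p}]\cdot S)^{\operatorname{GL}(n,\mathbb{K})}=\mathbb{K}[M_{n,p}]^{\operatorname{GL}(n,\mathbb{K})}\cdot S$ feeds directly into the long exact sequence \eqref{long}: it identifies the kernel of $\mathbb{K}[M_{n,p}]^{\operatorname{GL}(n,\mathbb{K})}\to K_{n,p}^{\operatorname{GL}(n,\mathbb{K})}$ with $S_{n\mathbb{K}}$, so that $\mathcal{Z}_n(\mathcal{P})\cong B_{n\mathbb{K}}/S_{n\mathbb{K}}$ embeds into $K_{n,p}^{\operatorname{GL}(n,\mathbb{K})}$, reducing Theorem \ref{intdom} to the integrality of $K_{n,p}$.
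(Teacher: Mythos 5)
Your proof is correct and rests on the same principle as the paper's, namely linear reductivity of $\operatorname{GL}(n,\mathbb{K})$ in characteristic zero, but you invoke it abstractly via the Reynolds operator whereas the paper constructs the same projection concretely by Weyl's unitary trick: writing an invariant $x=\sum_l t_l\,s_l$ with $s_l\in S$, it averages each $t_l$ over the compact form $\operatorname{U}(n)$ against Haar measure to produce an invariant $b_l$ with $x=\sum_l b_l s_l$, and then asserts that the identity ``extends by extending the ground field'' to all of $\operatorname{GL}(n,\mathbb{K})$. The core step is identical in both — multiplication by an invariant commutes with the projection onto invariants — so the mathematical content is the same. What your version buys is tidiness: it avoids the detour through the compact real form and, more substantively, it treats general $\mathbb{K}$ of characteristic zero more carefully, since the paper's ``extend the ground field'' step is left implicit, whereas you explicitly reduce to $\overline{\mathbb{K}}$ and descend using that both formation of $\operatorname{GL}(n,\cdot)$-invariants and the ideal $\mathbb{K}[M_{n,p}]\cdot S$ commute with the flat extension $\mathbb{K}\hookrightarrow\overline{\mathbb{K}}$. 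One small caveat worth recording in a final write-up: the existence and multiplicativity properties of the Reynolds operator for a rational (infinite-dimensional) $\operatorname{GL}(n)$-module should be cited or justified by the exhaustion by finite-dimensional submodules you mention, which is exactly what guarantees that the decomposition $\mathbb{K}[M_{n,p}]=\mathbb{K}[M_{n,p}]^{\operatorname{GL}(n,\mathbb{K})}\oplus\mathcal{I}$ is well defined and $\operatorname{GL}(n,\mathbb{K})$-stable.
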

\begin{proof}
The proof follows from Weyl's unitary trick.
Let $$\operatorname{U}(n) = \{g \in \operatorname{GL}(n,\mathbb{K})\;|\; g\cdot \bar{g}^T = I\}.$$ We want to prove that $$\left(\mathbb{K}[M_p]\cdot S\right)^{\operatorname{U}(n)} = \mathbb{K}[M_p]^{\operatorname{U}(n)}\cdot S.$$
Notice first that one inclusion is obvious $$\left(\mathbb{K}[M_p]\cdot S\right)^{\operatorname{U}(n)} \supseteq \mathbb{K}[M_p]^{\operatorname{U}(n)}\cdot S.$$
      We next prove the other inclusion: $$\left(\mathbb{K}[M_p]\cdot S\right)^{\operatorname{U}(n)} \subseteq \mathbb{K}[M_p]^{\operatorname{U}(n)}\cdot S.$$ For this, let $\D g$ be a Haar measure on $\operatorname{U}(n)$. Let $x$ belongs to $\left(\mathbb{K}[M_p]\cdot S\right)^{\operatorname{U}(n)}$.
      We represent $x$ by $\sum_{l=1}^k t_l \cdot  s_l$, where $t_l \in \mathbb{K}[M_p]$ and $s_l \in  S$.
       Since $S \subseteq \mathbb{K}[M_p]^{\operatorname{GL}(n,\mathbb{K})} \subseteq \mathbb{K}[M_p]^{\operatorname{U}(n)}$, for any $g\in \operatorname{U}(n)$,
      $g \circ s_l = s_l$.
       Thus we have $$x = g \circ x = \sum_{l=1}^k (g \circ t_l) \cdot \left(g \circ s_l \right)  = \sum_{l=1}^k (g \circ t_l) \cdot s_l.$$
By integrating over $\operatorname{U}(n)$: \begin{equation}
\label{eqgr}
g \circ x  =  \int_{ \operatorname{U}(n)} \sum_{l=1}^k (g \circ t_l) \cdot s_l \;\D g = \sum_{l=1}^k \left(\int_{ \operatorname{U}(n)}  g \circ t_l \; \D g \right) \cdot s_l,
\end{equation}
where $$b_l = \int_{ \operatorname{U}(n)}  g \circ t_l  \; \D g \in \mathbb{K}[M_p].$$ For any $g_1$ in $\operatorname{U}(n)$, we have
\begin{equation}
\begin{aligned}
&g_1 \circ b_l = \int_{ \operatorname{U}(n)}  g_1 \circ \left(g \circ t_l\right) \;\D g = \int_{ \operatorname{U}(n)}  \left(\left(g_1 \cdot g\right) \circ t_l\right)  \;\D g \\&= \int_{ \operatorname{U}(n)}  \left(\left(g_1 \cdot g\right) \circ t_l\right) \; \D \left(g_1\cdot g \right) = b_l.
\end{aligned}
\end{equation}
Thus $b_l$ belongs to $\mathbb{K}[M_p]^{\operatorname{U}(n)}$,
 $x$ belongs to  $\mathbb{K}[M_p]^{\operatorname{U}(n)}\cdot S$. Hence
$$\left(\mathbb{K}[M_p]\cdot S\right)^{\operatorname{U}(n)} \subseteq \mathbb{K}[M_p]^{\operatorname{U}(n)}\cdot S.$$
Therefore, we obtain $$\left(\mathbb{K}[M_p]\cdot S\right)^{\operatorname{U}(n)} = \mathbb{K}[M_p]^{\operatorname{U}(n)}\cdot S.$$
By extending the ground field $\mathbb{K}$ of $\operatorname{U}(n)$, the property is extended to $\operatorname{GL}(n,\mathbb{K})$. Therefore, we conclude that
$$(\mathbb{K}[M_{n,p}]\cdot S)^{\operatorname{GL}(n,\mathbb{K})} = \mathbb{K}[M_{n,p}]^{\operatorname{GL}(n,\mathbb{K})}\cdot S.$$
This complete the proof of the lemma.
\end{proof}

Then, the integrality of $\mathcal{Z}_n(\mathcal{P})$ is transferred to another ring $K_{n,p}^{\operatorname{GL}(n,\mathbb{K})}$ by the following proposition.
\begin{prop}
\label{propcohseq}
There is a ring homomorphism $\theta: B_{n\mathbb{K}}/S_{n\mathbb{K}}\rightarrow K_{n,p}^{\operatorname{GL}(n,\mathbb{K})}$ induced from the long exact sequence:
\begin{equation}
0 \rightarrow L^{\operatorname{GL}(n,\mathbb{K})} \rightarrow \mathbb{K}[M_{n,p}]^{\operatorname{GL}(n,\mathbb{K})} \rightarrow K_{n,p}^{\operatorname{GL}(n,\mathbb{K})} \rightarrow H^1(\operatorname{GL}(n,\mathbb{K}),L)\rightarrow.... ,
\end{equation}
 which is injective.
\end{prop}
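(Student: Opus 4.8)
The plan is to obtain $\theta$ directly from the long exact sequence produced by applying the $\operatorname{GL}(n,\mathbb{K})$-invariants functor to the short exact sequence (\ref{exactseq}), reading off only its first three terms. Recall that $(-)^{\operatorname{GL}(n,\mathbb{K})}=H^{0}(\operatorname{GL}(n,\mathbb{K}),-)$ is left exact, so (\ref{exactseq}) yields the long exact sequence (\ref{long}); the arrow $\mathbb{K}[M_{n,p}]^{\operatorname{GL}(n,\mathbb{K})}\to K_{n,p}^{\operatorname{GL}(n,\mathbb{K})}$ appearing there is nothing but the restriction to invariants of the quotient ring homomorphism $\mathbb{K}[M_{n,p}]\to K_{n,p}$, hence is itself a ring homomorphism onto a subring of $K_{n,p}$, and by Theorem \ref{thmpro} its source is exactly $B_{n\mathbb{K}}$.

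First I would use exactness of (\ref{long}) at $\mathbb{K}[M_{n,p}]^{\operatorname{GL}(n,\mathbb{K})}$: since the sequence begins $0\to L^{\operatorname{GL}(n,\mathbb{K})}\hookrightarrow \mathbb{K}[M_{n,p}]^{\operatorname{GL}(n,\mathbb{K})}$ with a genuine inclusion of subsets of $\mathbb{K}[M_{n,p}]$, the kernel of $B_{n\mathbb{K}}\to K_{n,p}^{\operatorname{GL}(n,\mathbb{K})}$ is precisely $L^{\operatorname{GL}(n,\mathbb{K})}=L\cap B_{n\mathbb{K}}$. Then I would identify this kernel with $S_{n\mathbb{K}}$: writing $S=\{\langle a_i|b_i\rangle\}_{i=1}^{p}$, we have $L=\mathbb{K}[M_{n,p}]\cdot S$ by definition and $S_{n\mathbb{K}}=B_{n\mathbb{K}}\cdot S=\mathbb{K}[M_{n,p}]^{\operatorname{GL}(n,\mathbb{K})}\cdot S$, so Lemma \ref{lemgit} gives $L^{\operatorname{GL}(n,\mathbb{K})}=(\mathbb{K}[M_{n,p}]\cdot S)^{\operatorname{GL}(n,\mathbb{K})}=\mathbb{K}[M_{n,p}]^{\operatorname{GL}(n,\mathbb{K})}\cdot S=S_{n\mathbb{K}}$. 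Consequently $B_{n\mathbb{K}}\to K_{n,p}^{\operatorname{GL}(n,\mathbb{K})}$ descends to an injective ring homomorphism $\theta\colon B_{n\mathbb{K}}/S_{n\mathbb{K}}\hookrightarrow K_{n,p}^{\operatorname{GL}(n,\mathbb{K})}$, as asserted. Precomposing with the isomorphism of Theorem \ref{inv1} then embeds $\mathcal{Z}_n(\mathcal{P})$ into $K_{n,p}^{\operatorname{GL}(n,\mathbb{K})}\subseteq K_{n,p}$, which reduces Theorem \ref{intdom} to checking that $K_{n,p}$ is an integral domain.

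The only real content is Lemma \ref{lemgit}, already established above via Weyl's unitary trick (Haar averaging over $\operatorname{U}(n)$ followed by extension of scalars); this is exactly where the hypothesis $\operatorname{char}\mathbb{K}=0$, equivalently linear reductivity of $\operatorname{GL}(n,\mathbb{K})$, enters, forcing $L^{\operatorname{GL}(n,\mathbb{K})}$ to be no larger than $S_{n\mathbb{K}}$. Everything else — the existence of the long exact sequence, the fact that the relevant arrows are ring maps, and the passage to the quotient — is formal homological algebra, so I expect no obstacle beyond invoking Theorem \ref{thmpro} and Lemma \ref{lemgit}.
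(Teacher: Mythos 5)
Your proof is correct and follows essentially the same route as the paper's: apply $(-)^{\operatorname{GL}(n,\mathbb{K})}$ to the short exact sequence, use Theorem \ref{thmpro} to identify the middle invariant ring with $B_{n\mathbb{K}}$, use Lemma \ref{lemgit} to identify $L^{\operatorname{GL}(n,\mathbb{K})}$ with $S_{n\mathbb{K}}$, and read off the injective $\theta$ from exactness. If anything you are slightly more explicit than the paper on why $\theta$ is a ring (not just module) homomorphism, namely that it is the restriction to invariants of the quotient ring map.
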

\begin{proof}
By Theorem \ref{thmpro}, we have $\mathbb{K}[M_{n,p}]^{\operatorname{GL}(n,\mathbb{K})} = B_{n\mathbb{K}}$. By Lemma \ref{lemgit}, we have
$$L^{\operatorname{GL}(n,\mathbb{K})} = \left(\mathbb{K}[M_{n,p}]\cdot (\{\left<a_i|b_i \right>\}_{i=1}^p) \right)^{\operatorname{GL}(n,\mathbb{K})} = \mathbb{K}[M_{n,p}]^{\operatorname{GL}(n,\mathbb{K})} \cdot \left(\{\left<a_i|b_i \right>\}_{i=1}^p \right) = B_{n\mathbb{K}} \cdot (\{\left<a_i|b_i \right>\}_{i=1}^p) = S_{n\mathbb{K}}.$$
Hence Long exact sequence \ref{long} becomes into:
\begin{equation}
0 \rightarrow S_{n\mathbb{K}} \rightarrow B_{n\mathbb{K}} \rightarrow K_{n,p}^{\operatorname{GL}(n,\mathbb{K})} \rightarrow H^1(\operatorname{GL}(n,\mathbb{K}),L)\rightarrow...
\end{equation}
Therefore, there is an injective module homomorphism $\theta$ from $B_{n\mathbb{K}}/S_{n\mathbb{K}}$ to $K_{n,p}^{\operatorname{GL}(n,\mathbb{K})}$. By definition of $\theta$ with respect to Exact sequence \ref{exactseq}, for any $a,b \in B_{n\mathbb{K}}/S_{n\mathbb{K}}$, we have $$\theta(a\cdot b)=\theta(a)\cdot \theta(b).$$
Thus the module homomorphism $\theta$ is also a ring homomorphism. As such, we conclude that the ring homomorphism $\theta$ is injective.
\end{proof}
Therefore, we only need to prove that $K_{n,p}$ is integral. This is the content of the following proposition.
\begin{prop}
\label{propKp}
For $n>1$, $K_{n,p}$ is an integral domain.
\end{prop}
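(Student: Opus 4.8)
The plan is to exploit the fact that the ideal $L\subset\mathbb{K}[M_{n,p}]$ is generated by the $p$ polynomials $\langle a_i|b_i\rangle=\sum_{l=1}^n a_{i,l}b_{i,l}$, $i=1,\dots,p$, and that these involve pairwise disjoint blocks of the $2np$ coordinate functions on $M_{n,p}$ (the $i$-th generator uses only the $2n$ variables attached to the index $i$). Writing
\[
Q:=\mathbb{K}[x_1,\dots,x_n,y_1,\dots,y_n]\big/\bigl(\textstyle\sum_{l=1}^n x_ly_l\bigr),
\]
this partition of the variables gives at once a $\mathbb{K}$-algebra isomorphism $K_{n,p}\cong Q^{\otimes p}=Q\otimes_{\mathbb{K}}\cdots\otimes_{\mathbb{K}}Q$. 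So it suffices to show that each factor $Q$ is an integral domain and that a tensor product of such factors is again a domain.

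For the first point I would check that $f:=\sum_{l=1}^n x_ly_l$ is irreducible in the UFD $\mathbb{K}[x_1,\dots,x_n,y_1,\dots,y_n]$. If it factored nontrivially then, being homogeneous of degree $2$, it would be a product of two linear forms, hence a quadratic form of rank at most $2$; but the Gram matrix of $f$ is (up to the scalar $\tfrac12$) the invertible block matrix $\bigl(\begin{smallmatrix}0&I_n\\ I_n&0\end{smallmatrix}\bigr)$, so $f$ has rank $2n\ge 4$ since $n>1$, a contradiction. Hence $(f)$ is prime and $Q$ is a domain. Moreover the rank of a quadratic form is unchanged under field extension, so the same computation shows $f$ stays irreducible over the algebraic closure $\bar{\mathbb{K}}$; thus $Q\otimes_{\mathbb{K}}\bar{\mathbb{K}}\cong\bar{\mathbb{K}}[x_1,\dots,y_n]/(f)$ is still a domain, i.e. $Q$ is geometrically integral.

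Then I would base change to $\bar{\mathbb{K}}$: $K_{n,p}\otimes_{\mathbb{K}}\bar{\mathbb{K}}\cong(Q\otimes_{\mathbb{K}}\bar{\mathbb{K}})^{\otimes_{\bar{\mathbb{K}}}p}$ is a tensor product over the algebraically closed field $\bar{\mathbb{K}}$ of finitely generated $\bar{\mathbb{K}}$-algebras each of which is a domain, and such a tensor product is a domain (over an algebraically closed field the product of irreducible affine varieties is irreducible, and a tensor product of reduced finite-type algebras over a perfect field is reduced — here everything is reduced, being a domain, and $\mathbb{K}$ has characteristic $0$ — so $K_{n,p}\otimes_{\mathbb{K}}\bar{\mathbb{K}}$ is the coordinate ring of an integral affine variety). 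Finally $\mathbb{K}\hookrightarrow\bar{\mathbb{K}}$ splits as a map of $\mathbb{K}$-vector spaces, so $K_{n,p}=K_{n,p}\otimes_{\mathbb{K}}\mathbb{K}$ is a subring of $K_{n,p}\otimes_{\mathbb{K}}\bar{\mathbb{K}}$, and a subring of a domain is a domain; this proves $K_{n,p}$ is an integral domain. Combined with Proposition~\ref{propcohseq} and Theorem~\ref{inv1}, the integrality of $\mathcal{Z}_n(\mathcal{P})\cong B_{n\mathbb{K}}/S_{n\mathbb{K}}$ follows, since it embeds into $K_{n,p}^{\operatorname{GL}(n,\mathbb{K})}\subseteq K_{n,p}$. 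The one genuinely delicate step is the passage from "each $Q$ is a domain" to "$Q^{\otimes p}$ is a domain": tensor products of domains over a field can fail to be domains, and what rescues the argument is the geometric integrality of $Q$, which here comes for free from the field-independence of the rank of the quadratic form $\sum_{l=1}^n x_ly_l$.
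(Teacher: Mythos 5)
Your proof is correct, but it takes a genuinely different route from the paper's. The paper proves the statement by induction on $p$: for $p=1$ it shows $\sum_k a_{1,k}b_{1,k}$ is irreducible by a degree-and-variable-count argument (if it factored as a product of two linear forms in $r$ and $s$ disjoint variables, then $rs=n$ and $r+s=2n$, forcing $rs\geq 2n-1>n$), and for the inductive step it writes $K_{n,m+1}=K_{n,m}[\{a_{m+1,k},b_{m+1,k}\}]/(\sum_k a_{m+1,k}b_{m+1,k})$ and asserts the generator stays irreducible over $K_{n,m}$ because the new variables do not occur there. You instead identify $K_{n,p}\cong Q^{\otimes p}$ outright, show $Q=\mathbb{K}[x,y]/(\sum x_ly_l)$ is a domain via the rank of the quadratic form ($2n>2$, so it cannot be a product of two linear forms), observe rank is insensitive to base change so $Q$ is geometrically integral, and then invoke the standard fact that a tensor product of geometrically integral finite-type algebras over a perfect field is integral. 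The quadratic-form rank argument is a slicker substitute for the paper's monomial counting, and the geometric-integrality framing buys you something real: the paper's inductive step, as written, is not literally justified by ``the new variables do not appear in $K_{n,m}$'' (irreducibility of a polynomial over $\mathbb{K}$ need not persist over a larger $\mathbb{K}$-algebra), and one must either re-run the degree argument with coefficients in the domain $K_{n,m}$ or, as you do, pass to $\bar{\mathbb{K}}$ where the issue evaporates. Your approach makes this point cleanly and explicitly.
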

\begin{proof}
We prove the proposition by induction on the number of the vectors or covectors $p$. Let us start with $p=1$. When $p = 1$,  $K_{n,1} = \mathbb{K}[M_{n,1}]/(\{\sum_{k=1}^n a_{1,k} \cdot b_{1,k}\})$.
Let us define the degree of a monomial in $\mathbb{K}[M_{n,1}]$ to be the sum of the degrees in all the variables. Let the degree of a polynomial $f$ in $\mathbb{K}[M_{n,1}]$ be the maximal degree of the monomials in $f$, denoted by $\deg(f)$. Suppose that $\sum_{k=1}^n a_{1,k} .b_{1,k}$ is a reducible polynomial in $\mathbb{K}[M_{n,1}]$, we have $$\sum_{k=1}^n a_{1,k} .b_{1,k} = g\cdot h,$$ where $g,h \in \mathbb{K}[M_{n,1}]$, $\deg(g) >0$ and $\deg(h) >0$. Since $\mathbb{K}[M_{n,1}]$ is an integral domain, $2 = \deg(g h) = \deg(g) + \deg(h)$, so we have $\deg(g) =\deg(h) = 1$.  Suppose that $$g = \lambda_0 + \lambda_1 \cdot  c_1+...+ \lambda_r \cdot c_r ,$$ $$h = \mu_0 + \mu_1 \cdot  d_1+...+ \mu_s \cdot d_s,$$ where $\lambda_1,..., \lambda_r, \mu_1,..., \mu_s$ are non zero elements in $\mathbb{K}$, $c_1,...,c_r$ ($d_1,...,d_s$ resp.) are different elements in $\{a_{1,k}, b_{1,k}\}_{k=1}^n$. Since there is no square in $g\cdot h$, we have $$\{c_1,...,c_r\} \cap \{d_1,...,d_s\} = \emptyset .$$
Because there are n monomials in $gh$, we obtain
$$r \cdot s = n.$$
Moreover, there are $2n$ variables in $g\cdot h$, we have
$$r + s = 2n,$$
thus $$r\cdot s \geq 2n-1.$$ Since $n>1$, we obtain that $$r\cdot s \geq 2n-1 >n = r\cdot s,$$ which is a contradiction. We therefore conclude that  $\sum_{k=1}^n a_{1,k} .b_{1,k}$ is an irreducible polynomial in $\mathbb{K}[M_{n,p}]$.
Since $\mathbb{K}[M_{n,1}]$ is an integral domain, we obtain that $K_{n,1}$ is an integral domain. Suppose that the proposition is true for $p = m \geq 1$. When $p = m+1$,
\begin{equation}
\begin{aligned}
&K_{n,m+1} = \mathbb{K}\left[\{a_{i,k}, b_{i,k}\}_{i,k=1}^{m+1,n}\right]/\left(\{\sum_{k=1}^n a_{i,k} .b_{i,k}\}_{i=1}^{m+1}\right) \\&= K_{n,m}\left[\{a_{{m+1},k}, b_{{m+1},k}\}_{k=1}^n\right/\left(\sum_{k=1}^n a_{{m+1},k} \cdot b_{{m+1},k}\right),
\end{aligned}
\end{equation}
we have $K_{n,m}$ is an integral domain by induction, thus $K_{n,m}[\{a_{{m+1},k}, b_{{m+1},k}\}_{k=1}^n]$ is an integral domain. By the above argument, the polynomial $\sum_{k=1}^n a_{{m+1},k} \cdot b_{{m+1},k}$ is an irreducible polynomial over $\mathbb{K}[\{a_{{m+1},k}, b_{{m+1},k}\}_{k=1}^n]$. Moreover, $a_{{m+1},k}, b_{{m+1},k}$($k=1,...,n$) are not variables that appear in $K_{n,m}$, so $\sum_{k=1}^n a_{{m+1},k} \cdot b_{{m+1},k}$ is an irreducible polynomial over $K_{n,m}[\{a_{{m+1},k}, b_{{m+1},k}\}_{k=1}^n] $. Hence $K_{n,m+1}$ is an integral domain.

We therefore conclude that $K_{n,p}$ is an integral domain for any $p \geq 1$ and $n>1$.
\end{proof}
\begin{proof}[Proof of Theorem 4.7]
  By Proposition \ref{propKp}, the ring $K_{n,p}$ is an integral domain, we deduce that the invariant ring $K_{n,p}^{\operatorname{GL}(n,\mathbb{K})}$ is an integral domain. By Proposition \ref{propcohseq}, there is an injective ring homomorphism $\theta$ from $B_{n\mathbb{K}}/S_{n\mathbb{K}} $ to $K_{n,p}^{\operatorname{GL}(n,\mathbb{K})}$, so $B_{n\mathbb{K}}/S_{n\mathbb{K}}$ is an integral domain. Moreover, by Theorem \ref{inv1} $\mathcal{Z}_n(\mathcal{P}) \cong B_{n\mathbb{K}}/S_{n\mathbb{K}}$, we conclude that for $n>1$, the rank $n$ swapping ring $\mathcal{Z}_n(\mathcal{P})$ is an integral domain.
\end{proof}
\begin{remark}
The ring $\mathcal{Z}_1(\mathcal{P})$ is not an integral domain, since $$D = xy\cdot yz = \det \left(\begin{array}{cc}
                                                                                   xy & xz \\
                                                                                   yy & yz
                                                                                 \end{array}\right)
$$ is zero in $\mathcal{Z}_1(\mathcal{P})$, but we have $xy$ and $yz$ are not zero in $\mathcal{Z}_1(\mathcal{P})$ whenever $x\neq y$, $y\neq z$ .
\end{remark}
\subsection{Rank $n$ swapping multifraction algebra of $\mathcal{P}$}
After Theorem \ref{intdom}, we define rank $n$ swapping multifraction algebra of $\mathcal{P}$ without any obstruction.
\begin{defn}{\sc[rank $n$ swapping fraction algebra of $\mathcal{P}$]}
Let $\mathcal{Q}_n(\mathcal{P})$ be the total fraction ring of $\mathcal{Z}_n(\mathcal{P})$. The {\em rank $n$ swapping fraction algebra} of $\mathcal{P}$ is the total fraction ring $\mathcal{Q}_n(\mathcal{P})$ equipped with the swapping bracket, denoted by $\left(\mathcal{Q}_n(\mathcal{P}),\{\cdot,\cdot\}\right)$.
\end{defn}

Let $\mathcal{CR}_n(\mathcal{P}) = \{\left. [x,y,z,t]=\frac{xz}{xt} \cdot \frac{yt}{yz} \in \mathcal{Q}_n(\mathcal{P}) \; \right \vert\; \forall x,y,z,t \in \mathcal{P},\; x\neq t,\; y \neq z\}$ be the set of all the cross fractions in $\mathcal{Q}_n(\mathcal{P})$.
Let $\mathcal{B}_n(\mathcal{P})$ be the sub fraction ring of $\mathcal{Q}_n(\mathcal{P})$ generated by $\mathcal{CR}_n(\mathcal{P})$.

Similar to Proposition \ref{thm2.11}, we have the following.
\begin{prop}
The sub fraction ring $\mathcal{B}_n(\mathcal{P})$ is closed under swapping bracket.
\end{prop}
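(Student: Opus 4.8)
The plan is to run the argument of Proposition \ref{thm2.11} in the rank $n$ setting; the point is that the two facts making this legitimate have already been established. By Theorem \ref{rwdefined} the swapping bracket descends to $\mathcal{Z}_n(\mathcal{P})$, so the quotient map $\mathcal{Z}(\mathcal{P}) \to \mathcal{Z}_n(\mathcal{P})$ is a homomorphism of Poisson algebras; and by Theorem \ref{intdom} the quotient $\mathcal{Z}_n(\mathcal{P})$ is an integral domain, so its total fraction ring $\mathcal{Q}_n(\mathcal{P})$ exists and inherits the bracket through $\{a,\frac{1}{b}\} = -\frac{\{a,b\}}{b^2}$. Consequently the defining identity $\{rx,sy\} = \mathcal{J}(rx,sy)\cdot ry \cdot sx$ and the Leibniz identity $\frac{\{c_1\cdots c_n,\,d_1\cdots d_m\}}{c_1\cdots c_n\,d_1\cdots d_m} = \sum_{i=1}^{n}\sum_{j=1}^{m}\frac{\{c_i,d_j\}}{c_i\,d_j}$ persist in $\mathcal{Q}_n(\mathcal{P})$.

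First I would reduce, using this Leibniz identity applied to products of cross fractions and their inverses, to checking that $\{[x,y,z,t],[u,v,w,s]\} \in \mathcal{B}_n(\mathcal{P})$ for any two elements of $\mathcal{CR}_n(\mathcal{P})$. Writing $[x,y,z,t] = e_1 e_2 e_3 e_4$ and $[u,v,w,s] = h_1 h_2 h_3 h_4$ exactly as in the proof of Proposition \ref{thm2.11} (so $e_1 = xz$, $e_2 = \frac{1}{xt}$, and so on), the heart of the matter is the single computation $\frac{\{xz,uw\}}{xz\cdot uw} = \mathcal{J}(xz,uw)\cdot\frac{xw}{xz}\cdot\frac{uz}{uw} = \mathcal{J}(xz,uw)\cdot[x,u,w,z]$, which lies in $\mathcal{B}_n(\mathcal{P})$ since $\mathcal{J}$ takes values in $\mathbb{K}$; the three variants obtained by moving the inverses around differ from this only by a sign. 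Hence $\frac{\{e_i,h_j\}}{e_i\cdot h_j}\in\mathcal{B}_n(\mathcal{P})$ for all $i,j$, and since $e_1e_2e_3e_4=[x,y,z,t]$ and $h_1h_2h_3h_4=[u,v,w,s]$ already lie in $\mathcal{B}_n(\mathcal{P})$, the Leibniz identity yields $\{[x,y,z,t],[u,v,w,s]\}\in\mathcal{B}_n(\mathcal{P})$, which gives closure.

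I do not expect a genuine obstacle: the real content --- that the bracket is well defined on the quotient and that the quotient is a domain --- is precisely Theorems \ref{rwdefined} and \ref{intdom}, after which the argument of Proposition \ref{thm2.11} transfers word for word. The only thing needing minor care is the bookkeeping that $\frac{xw}{xz}\cdot\frac{uz}{uw}$ is indeed a cross fraction (namely $[x,u,w,z]$), or is $0$ in the degenerate cases $x=z$ or $u=w$, so that the product $\mathcal{J}(xz,uw)\cdot[x,u,w,z]$ stays inside the subring generated by $\mathcal{CR}_n(\mathcal{P})$; this is exactly the point silently handled in Proposition \ref{thm2.11}.
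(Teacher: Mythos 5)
Your proposal is correct and matches the paper's intent exactly: the paper proves this by simply remarking it is ``Similar to Proposition~\ref{thm2.11},'' and your argument is precisely that transfer --- invoking Theorems~\ref{rwdefined} and~\ref{intdom} to justify that the bracket and the total fraction ring make sense on $\mathcal{Z}_n(\mathcal{P})$, then repeating the Leibniz-rule decomposition and the key identity $\frac{\{xz,uw\}}{xz\cdot uw}=\mathcal{J}(xz,uw)\cdot[x,u,w,z]$ verbatim.
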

\begin{defn}{\sc[rank $n$ swapping multifraction algebra of $\mathcal{P}$]}
Let $n\geq 2$, the {\em rank $n$ swapping multifraction algebra of $\mathcal{P}$} is the sub fraction ring $\mathcal{B}_n(\mathcal{P})$ equipped with the closed swapping bracket, denoted by $\left(\mathcal{B}_n(\mathcal{P}),\{\cdot,\cdot\}\right)$.
\end{defn}
Then the ring homomorphism $I$ from $\mathcal{B}(\mathcal{R})$ to $C^{\infty}(H_n(S))$ for any $n>1$, induces the homomorphism $I_n$
from $\mathcal{B}_n(\mathcal{R})$ to $C^{\infty}(H_n(S))$ by extending the following formula on generators to $\mathcal{B}_n(\mathcal{R})$:
\begin{equation}
I_n([x,y,z,t]) = \mathbb{B}_{\rho}(x,y,z,t),
\end{equation}
for any $[x,y,z,t] \in \mathcal{CR}_n(\mathcal{R})$. By the rank $n$ cross ratio condition, the homomorphism $I_n$ is well-defined. Then we rephrase Theorem \ref{lswapabg} as follows.
\begin{thm}
{\sc[F. Labourie \cite{L12}]} With the same conditions as in Theorem \ref{lswapabg}, for any $b_0,b_1 \in \mathcal{B}_n(\mathcal{R})$, we have $$\lim_{n\rightarrow \infty}\{I_n(b_0),I_n(b_1)\}_{S_n} = I_n\circ\{b_0,b_1\}.$$
\end{thm}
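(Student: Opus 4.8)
The plan is to derive this statement as an immediate corollary of Labourie's asymptotic Poisson identity (Theorem~\ref{lswapabg}) together with the two main results already proved, by exhibiting the rank $n$ constructions as a Poisson quotient of the original ones. First I would record the relevant functoriality. The quotient $\mathcal{Z}(\mathcal{R})\twoheadrightarrow\mathcal{Z}_n(\mathcal{R})=\mathcal{Z}(\mathcal{R})/R_n(\mathcal{R})$ carries every generator $xt$ with $x\neq t$ to a nonzero element, because $R_n(\mathcal{R})$ is generated by the $(n+1)\times(n+1)$ determinants $\Delta((x_1,\dots,x_{n+1}),(y_1,\dots,y_{n+1}))$, all of degree $n+1\geq 3$, so no element of degree $1$ other than $0$ can lie in it; by Theorem~\ref{intdom} that image is then a non-zero-divisor in the domain $\mathcal{Z}_n(\mathcal{R})$. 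Hence the quotient map localises, sending the cross fraction $[x,y,z,t]=\frac{xz}{xt}\cdot\frac{yt}{yz}$ to the cross fraction $[x,y,z,t]\in\mathcal{Q}_n(\mathcal{R})$, and restricts to a surjection $q\colon\mathcal{B}(\mathcal{R})\to\mathcal{B}_n(\mathcal{R})$. By Theorem~\ref{wd}, $R_n(\mathcal{R})$ is a Poisson ideal, so the swapping bracket descends and $q$ is a homomorphism of Poisson algebras; concretely, for $b_0,b_1\in\mathcal{B}_n(\mathcal{R})$ and any lifts $\widetilde b_0,\widetilde b_1\in\mathcal{B}(\mathcal{R})$ one has $q(\{\widetilde b_0,\widetilde b_1\})=\{b_0,b_1\}$, by Leibniz's rule applied to representatives.

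Next I would verify that $I$ factors as $I=I_n\circ q$. On generators this is the defining formula, $I_n([x,y,z,t])=\mathbb{B}_\rho(x,y,z,t)=I([x,y,z,t])$; to see that this is consistent one uses the rank $n$ cross-ratio conditions of \cite{L07}. In the present language these say precisely that the map $\mathcal{Z}(\mathcal{R})\to C^\infty(H_n(S))$, $xy\mapsto\langle\widetilde\xi_\rho(x)\mid\widetilde\xi^{*}_\rho(y)\rangle$, kills each $\Delta((x_1,\dots,x_{n+1}),(y_1,\dots,y_{n+1}))$: the matrix $\big(\langle\widetilde\xi_\rho(x_i)\mid\widetilde\xi^{*}_\rho(y_j)\rangle\big)_{i,j=1}^{n+1}$ factors through $\mathbb{R}^n$ and so has rank at most $n$, whence its determinant vanishes. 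Therefore $I$ annihilates $R_n(\mathcal{R})$ at the level of $\mathcal{Z}$, descends to $\mathcal{Z}_n(\mathcal{R})$, and the induced map on the subfraction algebra is exactly $I_n$, so $I=I_n\circ q$.

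It remains to assemble the pieces. Fix $b_0,b_1\in\mathcal{B}_n(\mathcal{R})$ and choose lifts $\widetilde b_0,\widetilde b_1\in\mathcal{B}(\mathcal{R})$ with $q(\widetilde b_i)=b_i$. Then $I_n(b_i)=I(\widetilde b_i)$, and $I_n(\{b_0,b_1\})=I_n\big(q(\{\widetilde b_0,\widetilde b_1\})\big)=I(\{\widetilde b_0,\widetilde b_1\})$. Applying Theorem~\ref{lswapabg} to $\widetilde b_0,\widetilde b_1$ yields
$$\lim_{k\to\infty}\{I_n(b_0),I_n(b_1)\}_{S_k}=\lim_{k\to\infty}\{I(\widetilde b_0),I(\widetilde b_1)\}_{S_k}=I(\{\widetilde b_0,\widetilde b_1\})=I_n(\{b_0,b_1\}),$$
which is the claimed identity (over the vanishing sequence $S_k$; the index $n$ in the displayed limit of the statement is a harmless misprint for $k$).

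I do not expect a genuine obstacle: all the analytic content --- the convergence of the Atiyah--Bott--Goldman brackets to the swapping bracket --- is supplied by Theorem~\ref{lswapabg}, while Theorem~\ref{wd} and Theorem~\ref{intdom} are exactly what make the quotient $q$ a \emph{bona fide} Poisson homomorphism between honest (multi-)fraction algebras. The only point needing real, if routine, attention is the compatibility $q(\{\widetilde b_0,\widetilde b_1\})=\{b_0,b_1\}$ together with $I=I_n\circ q$, i.e.\ checking that nothing in the passage to rank $n$ disturbs the bracket of a chosen lift; this follows from the Poisson-ideal property and Leibniz's rule and requires no new idea.
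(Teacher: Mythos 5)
Your argument is correct and is essentially the approach the paper takes (stated very compactly there): the paper also notes that $I$ factors through $\mathcal{B}_n(\mathcal{R})$ via the rank~$n$ cross ratio conditions of \cite{L07} and that one may then replace $I$ by $I_n$ in Theorem~\ref{lswapabg}, with Theorems~\ref{wd} and~\ref{intdom} making this factorization a well-defined Poisson quotient. You have simply supplied the routine details (degree argument for nonvanishing of generators, Poisson-ideal descent, localization, determinant vanishing under $I$) that the paper leaves implicit, and you correctly flag the $n$-versus-$k$ misprint in the displayed limit.
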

Hence the rank $n$ swapping multifraction algebra $\left(\mathcal{B}_n(\mathcal{R}),\{\cdot,\cdot\}\right)$ characterizes the Hitchin component $H_n(S)$ for a fixed $n>1$.

\section{Cluster $\mathcal{X}_{\operatorname{PGL}(2,\mathbb{R}),D_k}$-space}
Even though the rank $n$ swapping multifraction algebra $(\mathcal{B}_n(\mathcal{P}), \{\cdot, \cdot\})$ characterizes the $\operatorname{PSL}(n, \mathbb{R})$ Hitchin component asymptotically, we still have the rank $n$ swapping multifraction algebra $(\mathcal{B}_n(\mathcal{P}), \{\cdot, \cdot\})$ characterizes the related object---cluster $\mathcal{X}_{\operatorname{PGL}(n,\mathbb{R}),D_k}$-space without this asymptotic behavior where $D_k$ is a disc with $k$ special points on the boundary. We will show a simple case when $n=2$.
We show that the cluster dynamic of $\mathcal{X}_{\operatorname{PGL}(2,\mathbb{R}),D_k}$ can be demonstrated in the rank $2$ swapping algebra. As a byproduct, we reprove that the Fock-Goncharov Poisson bracket for $\mathcal{X}_{\operatorname{PGL}(2,\mathbb{R}),D_k}$ is independent of the ideal triangulation.
\subsection{Cluster $\mathcal{X}_{\operatorname{PGL}(2,\mathbb{R}),D_k}$-space and rank $2$ swapping algebra}
Let $S$ be an oriented surface with non-empty boundary and a finite set $P$ of special points on boundary, considered modulo isotopy. In \cite{FG06}, Fock and Goncharov introduced the moduli space $\mathcal{X}_{G,S}$($\mathcal{A}_{G,S}$ resp.) which is a pair $(\nabla, f)$, where $\nabla$ is a flat connection on the principal $G$ bundle on the surface $S$, $f$ is a flat section of $\partial S\backslash P$ with values in the flag variety $G/B$ (decorated flag variety $G/U$ resp.). They found that the pair of two moduli spaces $(\mathcal{X}_{G,S}, \mathcal{A}_{G^L,S})$ is equipped with a cluster ensemble structure. Particularly, the moduli space $\mathcal{X}_{G,S}$ is called the {\em cluster $\mathcal{X}_{G,S}$-space}. Moreover, each one of the moduli spaces $\mathcal{X}_{G,S}, \mathcal{A}_{G,S}$ is equipped with a positive structure. When the set $P$ is empty, the hole on the surface $S$ should be regarded as the puncture, the positive part of $\mathcal{X}_{\operatorname{PGL}(2,\mathbb{R}),S}$ is related to the Teichm\"uller space of $S$, and the positive part of $\mathcal{A}_{\operatorname{SL}(2,\mathbb{R}),S}$ is related to Penner's decorated Teichm\"uller space \cite{P87}. The fact that Penner's decorated Teichm\"uller space is related to a cluster algebra was independently observed by M. Gekhtman, M. Shapiro, and A. Vainshtein \cite{GSV05}.

When $D_k$ is a disc with $k$ special points on the boundary, the generic cluster $\mathcal{X}_{\operatorname{PGL}(2,\mathbb{R}),D_k}$-space corresponds to the generic configuration space $\operatorname{Conf}_{2,k}$ of $k$ flags in $\mathbb{RP}^1$ up to projective transformations. Given a generic configuration of $k$ flags $m^1,y^1,z^1,t^1,n^1,x^1,...$ in $\mathbb{RP}^1$, let $P_k$ be the associated convex $k$-gon with $k$ vertices $m,y,z,t,n,x,...$ as illustrated in Figure \ref{Pk}. The ideal triangulation of $D_k$ corresponds to the triangulation of $P_k$. Given a triangulation $\mathcal{T}$ of the $k$-gon $P_k$, for any pair of triangles $(\Delta_{xyz}, \Delta_{xzt})$ of $\mathcal{T}$ where $x,y,z,t$ are anticlockwise ordered, the Fock-Goncharov coordinate \cite{FG06} corresponding to the inner edge $xz$ is
$$X_{xz} = -\frac{\Omega\left(\hat{y}^1 \wedge \hat{z}^1 \right)}{\Omega\left(\hat{t}^1\wedge \hat{z}^1\right)} \cdot \frac{\Omega\left(\hat{t}^1 \wedge\hat{x}^1\right)}{\Omega\left(\hat{y}^1\wedge \hat{x}^1\right)}.$$
By definition $X_{xz}=X_{zx}$, so there are $k-3$ different coordinates.
\begin{figure}\centering
\includegraphics[width=4in]{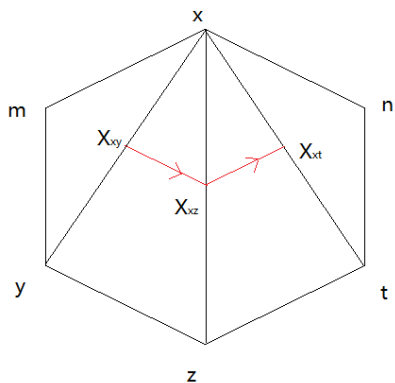}
\caption{\label{Pk} Fock-Goncharov coordinates for the triangulation $\mathcal{T}$.}
\end{figure}
\begin{defn}
\label{FGA}
{\sc[Fock-Goncharov algebra]}
Let $\mathcal{A}(\mathcal{T})$ be the fraction ring generated by $k-3$ Fock-Goncharov coordinates for the triangulation $\mathcal{T}$, the natural Fock-Goncharov Poisson bracket $\{\cdot,\cdot\}_2$ is defined on the fraction ring $\mathcal{A}(\mathcal{T})$ by extending the following map on the generators:
$$
\{X_{ab}, X_{cd}\}_2 = \varepsilon_{ab,cd} \cdot X_{ab} \cdot X_{cd}
$$
for any inner edges $ab$, $cd$, where the value of $\varepsilon_{ab,cd}$ only depend on the anticlockwise orientation of $P_k$ as illustrated in Figure \ref{Pk}. More precisely, $\varepsilon_{ab,cd} =1$($\varepsilon_{ab,cd} =-1$ resp.) when $a=c$ and $\Delta_{abd}$ is a triangle of $\mathcal{T}$ such that $a,b,d$ are ordered anticlockwise(clockwise resp.) in $P_k$; otherwise $\varepsilon_{ab,cd} =0$.

The {\em Fock-Goncharov algebra} of $\mathcal{T}$ is a pair $(\mathcal{A}(\mathcal{T}),\{\cdot,\cdot\}_2)$.
\end{defn}

\begin{defn}
Let $\mathcal{P}$ be the vertices of the convex $k$-gon $P_k$.
We define an injective ring homomorphism $\theta_{\mathcal{T}}$ from $\mathcal{A}(\mathcal{T})$ to $\mathcal{B}_2(\mathcal{P})$, by extending the following map on the generators:
\begin{equation}
\theta_{\mathcal{T}}(X_{xz}) := -\frac{yz}{tz} \cdot \frac{tx}{yx}
\end{equation}
for any inner edge of $\mathcal{T}$.
\end{defn}

\begin{thm}
\label{thmT}
The injective ring homomorphism $\theta_{\mathcal{T}}$ is Poisson with respect to the Poisson bracket $\{\cdot,\cdot\}_2$ and the swapping bracket $\{\cdot,\cdot\}$.
\end{thm}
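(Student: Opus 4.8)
The plan is to reduce the assertion to a check on generators and then run a combinatorial case analysis driven by the triangulation $\mathcal{T}$. Both the swapping bracket $\{\cdot,\cdot\}$ on $\mathcal{B}_2(\mathcal{P})$ and the Fock--Goncharov bracket $\{\cdot,\cdot\}_2$ on $\mathcal{A}(\mathcal{T})$ obey Leibniz's rule, and $\theta_{\mathcal{T}}$ is a ring homomorphism, so it suffices to prove
$$\{\theta_{\mathcal{T}}(X_{ab}),\theta_{\mathcal{T}}(X_{cd})\}=\varepsilon_{ab,cd}\cdot\theta_{\mathcal{T}}(X_{ab})\cdot\theta_{\mathcal{T}}(X_{cd})$$
for every pair of inner edges $ab,cd$ of $\mathcal{T}$, since the right-hand side equals $\theta_{\mathcal{T}}(\{X_{ab},X_{cd}\}_2)$ by Definition \ref{FGA}. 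Note that $\theta_{\mathcal{T}}(X_{xz})=-[y,t,z,x]$ whenever $\Delta_{xyz},\Delta_{xzt}$ are the two triangles of $\mathcal{T}$ along the edge $xz$; thus each $\theta_{\mathcal{T}}(X_{ab})$ is, up to the scalar $-1$, a single cross fraction, hence lies in $\mathcal{B}_2(\mathcal{P})$, which is closed under the swapping bracket.

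The next step is to expand $\theta_{\mathcal{T}}(X_{ab})$ as a product of four pair-variables and their inverses, namely those attached to the sides joining the two endpoints of $ab$ to the two apexes of the triangles of $\mathcal{T}$ along $ab$, and likewise for $cd$, and then to apply Leibniz's rule exactly as in the proof of Proposition \ref{thm2.11}. This writes $\{\theta_{\mathcal{T}}(X_{ab}),\theta_{\mathcal{T}}(X_{cd})\}$ as $\theta_{\mathcal{T}}(X_{ab})\,\theta_{\mathcal{T}}(X_{cd})$ times a sum of at most sixteen terms $\pm\{e_i,h_j\}/(e_i\,h_j)$, where $e_1,\dots,e_4$ are the pair-variables of $\theta_{\mathcal{T}}(X_{ab})$ and $h_1,\dots,h_4$ those of $\theta_{\mathcal{T}}(X_{cd})$; by the formula $\{rx,sy\}=\mathcal{J}(rx,sy)\cdot ry\cdot sx$ of Definition \ref{defn2.2}, each term carries the factor $\mathcal{J}(e_i,h_j)$. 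The key point is that every $e_i$ and every $h_j$ is the variable attached to a side of a triangle of $\mathcal{T}$, and sides of $\mathcal{T}$ are pairwise non-crossing; since $\mathcal{J}(e_i,h_j)$ depends only on the cyclic order of the four points involved, it is non-zero only when $e_i$ and $h_j$ have a common point, in which case it lies in $\{0,\pm\tfrac12\}$. This cuts the sum down to the terms supported at points lying in both of the quadrilaterals $\{a,b\}\cup\{\text{apexes of }ab\}$ and $\{c,d\}\cup\{\text{apexes of }cd\}$.

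I would then carry out the verification case by case according to the position of $ab$ and $cd$ in the convex polygon $P_k$: (i) $ab=cd$, where both sides vanish ($\{f,f\}=0$ and $\varepsilon_{ab,ab}=0$); (ii) $ab$ and $cd$ are not two distinct sides of a common triangle of $\mathcal{T}$, so that $\varepsilon_{ab,cd}=0$, where one checks that the few surviving linking-number contributions cancel against one another and the bracket vanishes; and (iii) $ab$ and $cd$ are two distinct sides of a common triangle $\Delta$ of $\mathcal{T}$ (so they share a vertex and $\varepsilon_{ab,cd}=\pm1$), where, after normalising the labels so the shared vertex is the first letter of both edges, the terms that survive are those of the pair-variables meeting at the shared vertex, at which $\mathcal{J}=\pm\tfrac12$, and after cancelling the remaining pair-variable factors they recombine into $\pm\,\theta_{\mathcal{T}}(X_{ab})\,\theta_{\mathcal{T}}(X_{cd})$, the sign being governed by the cyclic order of the vertices of $\Delta$ in $P_k$ and agreeing with $\varepsilon_{ab,cd}$ of Definition \ref{FGA}. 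The remaining relabelings (which of $a,b$ coincides with which of $c,d$) reduce to case (iii) via $X_{ab}=X_{ba}$ and the antisymmetry of the two brackets. A final application of Leibniz's rule then promotes the identity from the generators to all of $\mathcal{A}(\mathcal{T})$.

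The main obstacle is the bookkeeping in the last two steps: keeping track of the up to sixteen terms, deciding which linking numbers vanish, and checking that the survivors, together with every sign coming from Leibniz's rule, from $\mathcal{J}$, and from the $-1$'s in $\theta_{\mathcal{T}}$, assemble into precisely $\varepsilon_{ab,cd}\,\theta_{\mathcal{T}}(X_{ab})\,\theta_{\mathcal{T}}(X_{cd})$. Convexity of $P_k$ and the non-crossing property of $\mathcal{T}$ are exactly what keeps this tractable, since they force all but a couple of the relevant chords to be unlinked and reduce the genuinely distinct configurations to the handful suggested by Figure \ref{Pk}; but one still has to draw those pictures and evaluate $\mathcal{J}$ on each, which is where the actual work lies.
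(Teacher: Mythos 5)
Your argument is correct and follows the paper's strategy: reduce to generators via Leibniz, expand each $\theta_{\mathcal{T}}(X_{ab})$ into its four pair-variables, and use the non-crossing property of the edges of $\mathcal{T}$ to kill most linking numbers $\mathcal{J}(e_i,h_j)$. The paper organizes the resulting case analysis slightly differently---as a single displayed formula asserting that, for any edge $ab$ of $\mathcal{T}$, the quantity $\{ab,\theta_{\mathcal{T}}(X_{xz})\}/\bigl(ab\cdot\theta_{\mathcal{T}}(X_{xz})\bigr)$ equals $1$, $-1$, or $0$ according as $ab=zx$, $ab=xz$, or $ab$ is any other edge, then applies Leibniz once more---which packages your cases (i)--(iii) into one statement, and like you it leaves the underlying ``direct calculation'' of the linking numbers to the reader.
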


\begin{proof}
By direct calculations, for any inner edge $ab$ of the triangulation $\mathcal{T}$, we have
\begin{equation}
\frac{\left\{ab, \frac{yz}{tz} \cdot \frac{tx}{yx}\right\}}{ab\cdot \frac{yz}{tz} \cdot \frac{tx}{yx}}
=\begin{cases}
    1   &  \text{if $ab = zx$;} \\
    -1   &  \text{if $ab = xz$;}\\
    0   & \text{otherwise.}
 \end{cases}
\end{equation}
The theorem follows from the above equation and the Leibniz's rule.
\end{proof}
\subsection{Cluster dynamic in rank $2$ swapping algebra}
The cluster $\mathcal{X}$-space is introduced by Fock and Goncharov \cite{FG06} by using the same set-up as the cluster algebra \cite{FZ02}. We consider the case for the cluster $\mathcal{X}_{\operatorname{PGL}(2,\mathbb{R}),D_k}$-space.
\begin{defn}
{\sc[cluster $\mathcal{X}_{\operatorname{PGL}(2,\mathbb{R}),D_k}$-space \cite{FG04} \cite{FG06}]}
Let $I_{\mathcal{T}}$ be the set of $k-3$ inner edges of the triangulation $\mathcal{T}$ of $D_k$. The function $\varepsilon$ from $I_{\mathcal{T}}\times I_{\mathcal{T}}$ to $\mathbb{Z}$ is defined as in Definition \ref{FGA}, a {\em seed} is $\mathbf{I}_{\mathcal{T}} = (I_{\mathcal{T}}, \varepsilon)$.

A {\em mutation} at the edge $e \in I_{\mathcal{T}}$ changes the seed $\mathbf{I}_{\mathcal{T}}$ to a new one $\mathbf{I}_{\mathcal{T}'} = (I_{\mathcal{T}'}, \varepsilon')$, where the edge $e$ of the triangulation $\mathcal{T}$ is changed into the edge $e'$ of $\mathcal{T}$ by a flip illustrated in Figure \ref{muk}. We identify $I_{\mathcal{T}}$ with $I_{\mathcal{T}'}$ by identifying $e$ with $e'$, where
$$ \varepsilon_{i,j}'=\left\{
\begin{aligned}
-\varepsilon_{i,j}\; if \; e \in \{i,j\};  \\
\varepsilon_{i,j} + \varepsilon_{i,e} \max\{0, \varepsilon_{i,e} \varepsilon_{e,j}\}  \; if \; e \notin \{i,j\}.
\end{aligned}
\right.
$$
A {\em cluster transformation} is a composition of mutations and automorphisms of seeds.

We assign to the seed $\mathbf{I}_{\mathcal{T}}$ ($\mathbf{I}_{\mathcal{T}'}$ resp.) the split tori $\mathbb{T}_{\mathcal{T}}$($\mathbb{T}_{\mathcal{T}'}$ resp.) associated to the the Fock-Goncharov coordinates $\{X_i\}_{i\in I_{\mathcal{T}}}$ ($\{X_i'\}_{i\in I_{\mathcal{T}'}}$ resp.). Then
the transition function from $\mathbb{T}_{\mathcal{T}}$ to $\mathbb{T}_{\mathcal{T}'}$ is
$$\mu_e(X_i')= g_e(X_i)=\left\{
\begin{aligned}
&X_i(1 + X_e)^{-\varepsilon_{i,e}}\; &if \; e\neq i,  \varepsilon_{i,e}\leq 0;  \\
&X_i(1 + X_e^{-1})^{-\varepsilon_{i,e}}  \; &if \; e\neq i, \varepsilon_{i,e} >0; \\
&X_e^{-1} \; & if \; i=e.
\end{aligned}
\right.
$$
Any two triangulations are related by a composition of flips, therefore any two split tori are also related by a composition of the rational functions as above.

The {\em cluster $\mathcal{X}_{\operatorname{PGL}(2,\mathbb{R}),D_k}$-space} is obtained by gluing all the possible algebraic tori $\mathbb{T}_{\mathcal{T}}$ according to the transition functions described as above.
\end{defn}

\begin{figure}\centering
\includegraphics[width=4in]{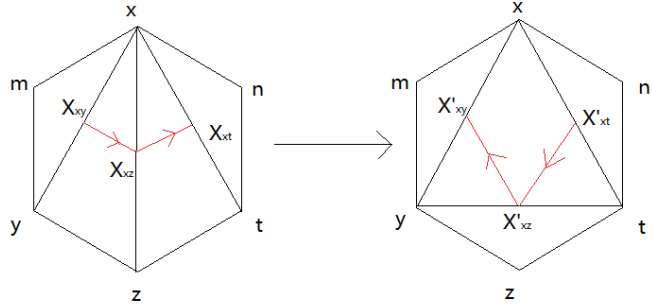}
\caption{\label{muk} Cluster transformation at $xz$.}
\end{figure}

We show the cluster dynamic of $\mathcal{X}_{\operatorname{PGL}(2,\mathbb{R}),D_k}$ in the rank $2$ swapping algebra as follows.
\begin{lem}
\label{lemT}
The triangulation $\mathcal{T}'$ is the flip of $\mathcal{T}$ at the edge $e$.
Then $$\theta_{\mathcal{T}}\circ g_e(X_i) = \theta_{\mathcal{T}'}(X_i').$$
\end{lem}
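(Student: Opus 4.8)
The plan is to verify the identity $\theta_{\mathcal{T}}\circ g_e(X_i) = \theta_{\mathcal{T}'}(X_i')$ edge by edge, splitting into the three cases that define the transition function $g_e$. The key ingredient will be Theorem~\ref{thmT}: since $\theta_{\mathcal{T}}$ and $\theta_{\mathcal{T}'}$ are \emph{ring} homomorphisms, it suffices to check the identity on the generators $X_i$ (for $i\in I_{\mathcal{T}'}$, which we identify with $I_{\mathcal{T}}$), and then invoke multiplicativity. First I would handle the case $i=e$: here $g_e(X_e)=X_e^{-1}$, and after the flip the inner edge $e=xz$ becomes $e'=yt$ for the quadrilateral with anticlockwise vertices $x,y,z,t$. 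So the claim reduces to
\[
\theta_{\mathcal{T}}\!\left(\frac{1}{X_{xz}}\right)=\theta_{\mathcal{T}'}(X_{yt}),
\]
i.e. $-\dfrac{tz}{yz}\cdot\dfrac{yx}{tx}=-\dfrac{zt}{xt}\cdot\dfrac{xy}{zy}$ in $\mathcal{B}_2(\mathcal{P})$. The right-hand side is read off directly from the definition of $\theta_{\mathcal{T}'}(X_{yt})$ applied to the flipped triangle pair $(\Delta_{ytx},\Delta_{ytz})$ (a rotation by one vertex of the defining picture), and the two expressions are visibly equal after rearranging the four factors; no relation from $R_2(\mathcal{P})$ is needed here.

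Next I would treat the two cases $e\neq i$. For an edge $i$ not incident to the flipped quadrilateral, $\varepsilon_{i,e}=0$, so $g_e(X_i)=X_i$ and the triangle pair defining $X_i$ is unchanged by the flip, so $\theta_{\mathcal{T}}(X_i)=\theta_{\mathcal{T}'}(X_i)$ trivially. The substantive subcase is an edge $i$ that is a side of the flipped quadrilateral $xyzt$, say $i=xy$ (and symmetrically $yz$, $zt$, $tx$); here $\varepsilon_{i,e}=\pm1$ and $g_e(X_i)=X_i(1+X_e^{\pm1})^{\mp1}$. Writing everything out through $\theta_{\mathcal{T}}$, the factor $1+X_e$ becomes, up to the sign in the Fock--Goncharov formula, $1-\dfrac{yz}{tz}\cdot\dfrac{tx}{yx}$, and here is where the $(n+1)\times(n+1)=3\times3$ determinant relation of $R_2(\mathcal{P})$ enters: the Plücker-type identity $xy\cdot zt-xt\cdot zy+xz\cdot yt\equiv 0$ (the vanishing of $\Delta((x,z,?),(y,t,?))$-type minors, equivalently the classical three-term relation among $2\times2$ minors pulled back through Theorem~\ref{inv1}) lets one rewrite $1+X_e$ as a single monomial in the generators $\{ab\}$ times $\dfrac{1}{tz\cdot yx}$. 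Multiplying $\theta_{\mathcal{T}}(X_i)$ by the appropriate power of this and cancelling, one lands exactly on $\theta_{\mathcal{T}'}(X_i')$, where $X_i'$ is the Fock--Goncharov coordinate of $i$ computed in the \emph{new} triangulation (the triangle on the $i$-side of the quadrilateral has changed its third vertex from one diagonal endpoint to the other).

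The main obstacle is purely bookkeeping: keeping the anticlockwise cyclic orders straight so that the signs $\varepsilon_{i,e}$, the signs built into the Fock--Goncharov formula and into the definition of $\theta_{\mathcal{T}}$, and the sign of the three-term Plücker relation all line up. I would organize this by fixing once and for all the convex quadrilateral $x,y,z,t$ (anticlockwise), recording $\theta_{\mathcal{T}}(X_{xz})=-\dfrac{yz}{tz}\cdot\dfrac{tx}{yx}$, $\theta_{\mathcal{T}'}(X_{yt})=-\dfrac{zt}{xt}\cdot\dfrac{xy}{zy}$, and the four side-coordinates in both triangulations, then checking the single algebraic identity $1+X_{xz}=\dfrac{xz\cdot yt}{tz\cdot yx}$ (after pulling back through $\theta_{\mathcal{T}}$ and using the Plücker relation in $\mathcal{Z}_2(\mathcal{P})$), from which all four side-cases follow by the same substitution. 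Once that identity is in hand, each case is a one-line cancellation, and the Leibniz/multiplicativity reduction from Theorem~\ref{thmT} does the rest.
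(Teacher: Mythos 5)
Your overall case split (handle $i=e$, then side edges, then far edges) is the same structure the paper uses, but there are two substantive errors.

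The first and most serious is in the case $i=e$. You write that
$-\dfrac{tz}{yz}\cdot\dfrac{yx}{tx}$ and $-\dfrac{zt}{xt}\cdot\dfrac{xy}{zy}$
are ``visibly equal after rearranging the four factors; no relation from $R_2(\mathcal{P})$ is needed here.'' But $\mathcal{Z}(\mathcal{P})=\mathbb{K}[\{xy\}]/(\{xx\})$ is a polynomial ring on \emph{ordered} pairs, so $tz$ and $zt$ are distinct generators (likewise $yz\neq zy$, $yx\neq xy$, $tx\neq xt$); there is no ``commuting of indices.'' That equality is genuinely a relation in $\mathcal{Z}_2(\mathcal{P})$, and the paper proves it precisely by invoking two $3\times3$ determinant relations,
\begin{equation*}
yz\cdot zt\cdot ty + tz\cdot zy \cdot yt=0,\qquad
tx\cdot xy\cdot yt + yx\cdot xt \cdot ty=0,
\end{equation*}
(these are $\Delta((y,z,t),(z,t,y))$ and $\Delta((t,x,y),(x,y,t))$, both of which have three vanishing diagonal entries and expand to exactly these two-term sums). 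Dividing the first by $yz\cdot zy\cdot yt$ and the second by $tx\cdot xt\cdot ty$ gives $\tfrac{tz}{yz}=-\tfrac{zt}{zy}\cdot\tfrac{ty}{yt}$ and $\tfrac{yx}{tx}=-\tfrac{xy}{xt}\cdot\tfrac{yt}{ty}$, and the auxiliary factors $\tfrac{ty}{yt}$ cancel only after multiplying the two together. So far from being free, the $i=e$ case is already where the rank-$2$ relations enter.

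The second error is your proposed master identity $1+X_{xz}=\dfrac{xz\cdot yt}{tz\cdot yx}$ (under $\theta_{\mathcal{T}}$), i.e.\ $tz\cdot yx - yz\cdot tx = xz\cdot yt$. This is a \emph{degree-two} relation, but the ideal $R_2(\mathcal{P})$ is generated by $3\times3$ determinants, which are homogeneous of degree three; there are no degree-two relations in $\mathcal{Z}_2(\mathcal{P})$. Working it out through a genuine generator, e.g.\ $\Delta((t,y,x),(z,x,t))=0$, one gets instead $tz\cdot yx - yz\cdot tx = -\dfrac{yt\cdot tx\cdot xz}{xt}$, so $\theta_{\mathcal{T}}(1+X_{xz})=-\dfrac{yt\cdot tx\cdot xz}{xt\cdot tz\cdot yx}$, which differs from your claimed expression by the factor $-\tfrac{tx}{xt}\ne 1$. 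Concretely, in the linear-algebra model of Theorem~\ref{inv1}, setting $a_x=0$ kills your right-hand side $\langle a_x|b_z\rangle\langle a_y|b_t\rangle$ while the left-hand side $\langle a_t|b_z\rangle\langle a_y|b_x\rangle-\langle a_y|b_z\rangle\langle a_t|b_x\rangle$ is generically nonzero, so your identity fails. The paper avoids this by using, for each side edge, the appropriate degree-three relation directly (e.g.\ $yz\cdot ty\cdot zx + yx\cdot tz \cdot zy - yz \cdot zy \cdot tx =0$ for $i=xy$, which is $\Delta((y,t,z),(z,y,x))=0$). You would need to replace your degree-two shortcut with these degree-three determinant identities, and there is no single ``master'' identity that works uniformly without an $ab/ba$-type correction factor.

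The parts that do match the paper: invoking the ring homomorphism property to reduce to generators, the observation that far edges with $\varepsilon_{i,e}=0$ are trivially unchanged, and the general plan of verifying the side edges one by one using rank-$2$ relations and cancelling.
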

\begin{proof}
Let us consider the case where $e=xz$ and the triangulations $\mathcal{T}, \mathcal{T}'$ is illustrated in Figure \ref{muk}.
For $i=xz$, we have
$$\theta_{\mathcal{T}'}(X_{xz}') = -\frac{zt}{xt}\cdot \frac{xy}{zy},$$
$$\theta_{\mathcal{T}}\circ g_e(X_{xz})=\theta_{\mathcal{T}}\left(\frac{1}{X_{xz}}\right) = -\frac{tz}{yz}\cdot \frac{yx}{tx}  .$$
By the rank $2$ swapping algebra relations:
$$yz\cdot zt\cdot ty + tz\cdot zy \cdot yt=0,$$
$$tx\cdot xy\cdot yt + yx\cdot xt \cdot ty=0,$$
we obtain
$$\theta_{\mathcal{T}}\circ g_e(X_{xz}) = \theta_{\mathcal{T}'}(X_{xz}').$$

For $i=xy$, we have
$$\theta_{\mathcal{T}'}(X_{xy}') = -\frac{my}{ty}\cdot \frac{tx}{mx},$$
$$\theta_{\mathcal{T}}\circ g_e(X_{xy})=\theta_{\mathcal{T}}\left(X_{xy}\cdot\left(1+X_{xz}^{-1}\right)^{-1}\right) = -\frac{my}{zy}\cdot \frac{zx}{mx} \left(1 -\frac{tz}{yz}\cdot \frac{yx}{tx}\right)^{-1}  .$$
By the rank $2$ swapping algebra relation:
$$yz\cdot ty\cdot zx + yx\cdot tz \cdot zy - yz \cdot zy \cdot tx =0,$$
 we obtain
$$\theta_{\mathcal{T}}\circ g_e(X_{xy}) = \theta_{\mathcal{T}'}(X_{xy}').$$
We have same results for the other inner edges and the other cases different from the one illustrated in Figure \ref{muk}, by the similar arguments. We therefore conclude that $$\theta_{\mathcal{T}}\circ g_e(X_i) = \theta_{\mathcal{T}'}(X_i').$$
\end{proof}
\begin{prop}
The homomorphism $\mu_e$ preserves the Poisson bracket, so the Poisson bracket $\{\cdot,\cdot\}_2$ does not depend on the triangulation $\mathcal{T}$.
\end{prop}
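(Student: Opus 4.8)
The plan is to obtain this proposition as a purely formal consequence of Theorem~\ref{thmT} and Lemma~\ref{lemT}, with no further computation. First I would record the precise meaning of $\mu_e$ as a pullback of coordinate rings: it is the ring homomorphism $\mu_e\colon \mathcal{A}(\mathcal{T}')\to\mathcal{A}(\mathcal{T})$ determined on generators by $\mu_e(X_i')=g_e(X_i)$. This is well defined on the fraction rings because each $g_e(X_i)$ is a nonzero rational expression in the $X_i$ and the $k-3$ elements $\{g_e(X_i)\}$ are algebraically independent (cluster mutation is an invertible rational coordinate change), so the assignment extends uniquely to a homomorphism of fraction rings. Since $\theta_{\mathcal{T}}$, $\theta_{\mathcal{T}'}$ and $\mu_e$ are all ring homomorphisms and $\mathcal{A}(\mathcal{T}')$ is generated by the $X_i'$, the identity of Lemma~\ref{lemT}, which reads $\theta_{\mathcal{T}}(g_e(X_i))=\theta_{\mathcal{T}'}(X_i')$ on generators, upgrades at once to the equality of homomorphisms $\theta_{\mathcal{T}}\circ\mu_e=\theta_{\mathcal{T}'}$ on all of $\mathcal{A}(\mathcal{T}')$.

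Next comes the diagram chase. Fix $a,b\in\mathcal{A}(\mathcal{T}')$. Applying in turn that $\theta_{\mathcal{T}}$ is Poisson (Theorem~\ref{thmT} for $\mathcal{T}$), that $\theta_{\mathcal{T}}\circ\mu_e=\theta_{\mathcal{T}'}$, that $\theta_{\mathcal{T}'}$ is Poisson (Theorem~\ref{thmT} for $\mathcal{T}'$), and again that $\theta_{\mathcal{T}}\circ\mu_e=\theta_{\mathcal{T}'}$, we get
\begin{align*}
\theta_{\mathcal{T}}\bigl(\{\mu_e(a),\mu_e(b)\}_2\bigr)
 &= \bigl\{\theta_{\mathcal{T}}(\mu_e(a)),\,\theta_{\mathcal{T}}(\mu_e(b))\bigr\}
  = \bigl\{\theta_{\mathcal{T}'}(a),\,\theta_{\mathcal{T}'}(b)\bigr\} \\
 &= \theta_{\mathcal{T}'}\bigl(\{a,b\}_2\bigr)
  = \theta_{\mathcal{T}}\bigl(\mu_e(\{a,b\}_2)\bigr),
\end{align*}
where the outer $\{\cdot,\cdot\}_2$ on the far left and far right denote the Fock--Goncharov brackets of $\mathcal{T}$ and $\mathcal{T}'$ respectively, while the inner braces are the swapping bracket on $\mathcal{B}_2(\mathcal{P})$. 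Because $\theta_{\mathcal{T}}$ is injective, it may be cancelled to give $\{\mu_e(a),\mu_e(b)\}_2=\mu_e(\{a,b\}_2)$; that is, $\mu_e$ is a Poisson homomorphism. (The same injectivity, incidentally, shows that each bracket $\{\cdot,\cdot\}_2$ really is Poisson, being the pullback of the swapping bracket along an injective ring map.)

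Finally I would globalize. Any two ideal triangulations of $D_k$ are joined by a finite chain of flips, possibly followed by a relabelling of edges; a composition of Poisson homomorphisms is Poisson and automorphisms of seeds trivially preserve the brackets, so every transition function used to glue the tori $\mathbb{T}_{\mathcal{T}}$ into $\mathcal{X}_{\operatorname{PGL}(2,\mathbb{R}),D_k}$ intertwines the local brackets $\{\cdot,\cdot\}_2$. Hence these patch together to a single Poisson bracket on the cluster $\mathcal{X}_{\operatorname{PGL}(2,\mathbb{R}),D_k}$-space, independent of the triangulation. The substantive work has already been done in Lemma~\ref{lemT}, whose proof rests on the $3\times 3$ determinant relations of the rank~$2$ swapping ring; the present statement is a formal corollary, and the only point meriting a word of care is that $\mu_e$ is indeed a well-defined homomorphism of fraction rings, i.e.\ that mutation is an invertible rational coordinate change, which is standard in cluster theory.
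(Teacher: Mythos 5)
Your proposal is correct and takes essentially the same approach as the paper: both reduce the claim via the injectivity of $\theta_{\mathcal{T}}$ (or $\theta_{\mathcal{T}'}$) and then run the identical chain of equalities alternating Theorem~\ref{thmT} (Poisson property of the $\theta$'s) and Lemma~\ref{lemT} (the identity $\theta_{\mathcal{T}}\circ g_e=\theta_{\mathcal{T}'}$ on generators). You add a few remarks the paper leaves implicit (well-definedness of $\mu_e$ on fraction rings, extension from generators to arbitrary elements, and globalization along chains of flips), but the core argument is the same.
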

\begin{proof}
We need to prove that $$\{\mu_e(X_i'),\mu_e(X_j')\}_2 = \mu_e\left(\{X_i',X_j'\}_2\right),$$
which is equivalent to $$\{g_e(X_i),g_e(X_j)\}_2 = \varepsilon_{i,j}' \cdot g_e(X_i)\cdot g_e(X_j).$$

Since $\theta_{\mathcal{T}'}$ is injective, we only need to prove that
$$\theta_{\mathcal{T}}\circ \{g_e(X_i),g_e(X_j)\}_2 = \theta_{\mathcal{T}}\left(\varepsilon_{i,j}' \cdot g_e(X_i)\cdot g_e(X_j)\right).$$

By Theorem \ref{thmT} and Lemma \ref{lemT}, we have
\begin{equation}
\begin{aligned}
&\theta_{\mathcal{T}}\circ \{g_e(X_i),g_e(X_j)\}_2 = \{\theta_{\mathcal{T}}\circ g_e(X_i), \theta_{\mathcal{T}}\circ g_e(X_j)\}
\\&=  \{\theta_{\mathcal{T}'}(X_i'), \theta_{\mathcal{T}'}(X_j')\} =  \theta_{\mathcal{T}'} \circ \{X_i', X_j'\}_2
\\&= \varepsilon_{i,j}' \cdot \theta_{\mathcal{T}'}(X_i) \cdot \theta_{\mathcal{T}'}(X_j) = \theta_{\mathcal{T}}\left(\varepsilon_{i,j}' \cdot g_e(X_i)\cdot g_e(X_j)\right).
\end{aligned}
\end{equation}
We therefore conclude that the homomorphism $\mu_e$ preserves the Poisson bracket.
\end{proof}

\begin{remark}
For $n$ in general, the generalized injective ring homomorphism $\theta_{T_n}$ is shown in \cite{Su15}, where the set $\mathcal{P}$ has $(n-1)\cdot k$ elements, each flag of $\mathbb{RP}^{n-1}$ corresponds to $n-1$ points near each other on the boundary $S^1$. We expect to glue the rank $n$ swapping algebras for the purpose of characterizing $\mathcal{X}_{\operatorname{PGL}(n,\mathbb{R}), S}$ for the surface case.
\end{remark}

\begin{acknowledgements}
This article is the first part of my thesis \cite{Su14} under the direction of Fran\c cois Labourie at university of Paris-Sud.
I am deeply indebted to Fran\c cois Labourie for suggesting the subject and for the guidance without which this work would not have been accomplished.
I also thank to Vladimir Fock for discussions and for suggesting many improvements to this article.
I also thank Jean Beno\^it Bost and Laurent Clozel for suggesting relevant references. Finally,
I am grateful to my home institute University of Paris-Sud, Max Planck Institute for
Mathematics and Yau Mathematical Sciences Center at Tsinghua University for their hospitality.
\end{acknowledgements}

\end{document}